\DeclareMathAlphabet{\mathcalligra}{T1}{calligra}{m}{n}
\theoremstyle{plain}
\newtheorem{theorem}{Theorem}
\newtheorem{corollary}[theorem]{Corollary}
\newtheorem{lemma}[theorem]{Lemma}
\newtheorem*{theorem*}{Theorem}
\theoremstyle{definition}
\newtheorem{definition}[theorem]{Definition}
\theoremstyle{remark}
\newtheorem*{remark}{Remark}
\newcommand{\R}{\mathbb{R}}
\newcommand{\Z}{\mathbb{Z}}
\newcommand{\N}{\mathbb{N}}
\newcommand{\C}{\mathbb{C}}
\renewcommand{\H}{\mathbb{H}}
\newcommand{\ord}{{\text {\rm ord}}}
\newcommand{\z}{\mathfrak{z}}
\newcommand{\y}{\mathbbm{y}}
\newcommand{\x}{\mathbbm{x}}
\newcommand{\w}{\mathbbm{w}}
\newcommand{\f}{\mathbbm{f}}
\newcommand{\calJ}{\mathcal{J}}
\newcommand{\GG}{\mathbb{G}}
\newcommand{\JJ}{\mathbb{J}}
\newcommand{\jj}{\mathbbm{j}}
\newcommand{\bgg}{\mathbbm{g}}
\newcommand{\W}{\mathcal{W}}
\newcommand{\HH}{\mathbb{H}}
\newcommand{\II}{\mathbb{I}}
\newcommand{\Log}{\operatorname{Log}}
\newcommand{\sgn}{\operatorname{sgn}}
\newcommand{\re}{\operatorname{Re}}
\newcommand{\im}{\operatorname{Im}}
\newcommand{\SL}{{\text {\rm SL}}}
\newcommand{\Ei}{\operatorname{Ei}}
\newcommand{\Ein}{\operatorname{Ein}}
\renewcommand{\Re}{\operatorname{Re}}
\renewcommand{\pmod}[1]{\  \,  \left(  \operatorname{mod} \,  #1 \right)}
\DeclareMathAlphabet{\mathpzc}{OT1}{pzc}{m}{it}
\numberwithin{equation}{section}
\numberwithin{theorem}{section}
\begin{document}
\title{An extension of Rohrlich's Theorem to the $j$-function}
\author{Kathrin Bringmann}
\address{Mathematical Institute, University of Cologne, Weyertal 86-90, D--50931 Cologne, Germany}
\email{kbringma@math.uni-koeln.de}
\author{Ben Kane}
\address{Department of Mathematics, University of Hong Kong, Pokfulam, Hong Kong}
\email{bkane@hku.hk}
\thanks{The research of the  first author is supported by the Alfried Krupp Prize for Young University Teachers of the Krupp foundation and the research leading to these results receives funding from the European Research Council under the European Union's Seventh Framework Programme (FP/2007-2013) / ERC Grant agreement n. 335220 - AQSER. The research of the second author was supported by grants from the Research Grants Council of the Hong Kong SAR, China (project numbers HKU 17302515, 17316416, 17301317, and 17303618).}
\maketitle
\section{Introduction and statement of results}\label{sec:introduction}
We start by recalling the following theorem of Rohrlich \cite{Rohrlich}. To state it, let $\omega_{\z}$ denote half of the size of the stabilizer $\Gamma_{\z}$ of $\z\in\H$ in $\SL_2(\Z)$ and for a meromorphic function $f:\mathbb H\to\C$ let $\ord_{\z}(f)$ be the order of vanishing of $f$ at $\z$. Moreover define $\Delta(z):=q\prod_{n\geq 1} (1-q^n)^{24}$, where $q:=e^{2\pi iz}$, and set $\jj(z):=\frac{1}{6}\log(y^6|\Delta(z)|)+1$, where $z=x+iy$.
Rohrlich's Theorem may be stated in terms of
the Petersson inner product, denoted by $\langle\ ,\, \rangle$.

\begin{theorem}[Rohrlich \cite{Rohrlich}]\label{thm:Rohrlich}
Suppose that $f$ is a meromorphic modular function with respect to $\SL_2(\Z)$ that
does not have a pole at $i\infty$ and
has constant term one in its Fourier expansion. Then
\[
\left\langle 1,\log|f|\right\rangle = -2\pi\sum_{\z\in\SL_2(\Z)\backslash\H} \frac{\ord_\z(f)}{\omega_\z}\jj(\z).
\]
\end{theorem}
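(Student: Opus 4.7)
The plan is to apply Green's second identity with $u = \log|f|$ and $v = \jj$ on a truncated and punctured fundamental domain, and then to identify the boundary contributions with the right-hand side of the claim. Three preliminary facts drive the argument. First, since $\Delta(z) \neq 0$ on $\H$, the function $\log|\Delta(z)|$ is harmonic there; combined with $\Delta_{\mathrm{hyp}}(\log y) = -1$ for $\Delta_{\mathrm{hyp}} := y^2(\partial_x^2 + \partial_y^2)$, this yields
\[
\Delta_{\mathrm{hyp}}\,\jj = -1 \qquad \text{on } \H.
\]
Second, $\jj$ is $\SL_2(\Z)$-invariant because $y^{12}|\Delta|^2$ is. Third, $\log|f|$ is harmonic on $\H$ away from the zeros and poles of $f$.

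For $T > 0$ large and $\eps > 0$ small, set $\calF_{T,\eps} := \{z \in \calF : y \leq T\} \setminus \bigcup_{\z} B_\eps(\z)$, the union running over representatives $\z \in \calF$ of the zeros and poles of $f$. Green's second identity (equivalent in the Euclidean and hyperbolic metrics, since the $y^{\pm 2}$ factors in Laplacian and area form cancel) then gives
\[
\int_{\calF_{T,\eps}} \bigl(\jj\,\Delta_{\mathrm{hyp}}\log|f| - \log|f|\,\Delta_{\mathrm{hyp}}\,\jj\bigr)\,\frac{dx\,dy}{y^2} = \int_{\partial\calF_{T,\eps}} \bigl(\jj\,\partial_n\log|f| - \log|f|\,\partial_n \jj\bigr)\,ds,
\]
with $ds$ Euclidean arc length and $\partial_n$ the outward Euclidean normal. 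By the three facts above, the left-hand side reduces to $\int_{\calF_{T,\eps}} \log|f|\,\frac{dx\,dy}{y^2}$, which tends to $\langle 1, \log|f|\rangle$ as $\eps \to 0$ and $T \to \infty$ (the log singularities at zeros/poles and the decay of $\log|f|$ at the cusp both make the full integral converge).

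I would then analyse the four classes of pieces of $\partial\calF_{T,\eps}$. The two vertical sides of $\calF$ cancel in pairs by the invariance of $u$ and $v$ under $z \mapsto z + 1$, and the two arcs of $|z|=1$ cancel analogously under $S$. On the top segment $y = T$, the hypothesis $f(z) = 1 + O(q)$ gives $\log f = \sum_{n \geq 1} c_n q^n$ once $T$ is large enough, so the $x$-averages of $\log|f|$ and $\partial_y \log|f|$ both vanish there; combined with the expansion $\jj(z) = \log y - \pi y/3 + 1 + O(e^{-2\pi y})$, in which the $x$-dependence is exponentially small, this forces the top contribution to vanish as $T \to \infty$. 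Finally, writing $\log|f(z)| = \ord_\z(f)\log|z - \z| + (\text{smooth})$ near each $\z$ and using the inward-pointing normal $-\partial_r$ on $\partial B_\eps(\z)$, a direct local computation yields
\[
\lim_{\eps \to 0}\int_{\partial B_\eps(\z) \cap \calF} \bigl(\jj\,\partial_n\log|f| - \log|f|\,\partial_n \jj\bigr)\,ds = -\ord_\z(f)\,\jj(\z)\,\frac{2\pi}{\omega_\z},
\]
where $2\pi/\omega_\z$ is the total opening angle subtended by $\calF$ at the $\SL_2(\Z)$-orbit of $\z$. Summing over $\z$ produces the asserted identity.

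I expect the main obstacle to be the careful boundary bookkeeping: establishing the side cancellations by matching outward normals under $T$ and $S$; tracking that both standing hypotheses on $f$ are needed to kill the contribution at $y = T$; and correctly counting the opening angle of $\calF$ at the elliptic points $i$ (a half-disk) and $\rho, \rho+1$ (two $\pi/3$-wedges in the same orbit) so as to recover the factor $2\pi/\omega_\z$.
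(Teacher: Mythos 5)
Your proposal is correct, and it is essentially the classical direct argument (close to Rohrlich's original one) rather than the route this paper takes. You apply Green's second identity once to the pair $(\jj,\log|f|)$ on the truncated, punctured fundamental domain, using only that $\log|f|$ is harmonic off the divisor and that $y^{2}(\partial_x^2+\partial_y^2)\jj=-1$, so that the area integral collapses to $\langle 1,\log|f|\rangle$ and everything else is boundary bookkeeping; your treatment of the cusp (both hypotheses on $f$ killing the $y=T$ term against $\jj=\log y-\tfrac{\pi}{3}y+1+o(1)$) and of the excised circles (total opening angle $2\pi/\omega_{\z}$ at each orbit) is right. The paper instead obtains this statement only as the $n=0$ case of Theorem \ref{thm:jninner}: it factors $\log\bigl(y^{k/2}|f|\bigr)$ into the prime-form logarithms $\bgg_{\z}$ via the product formula, constructs biharmonic $\xi$-preimages $\GG_{\z}$ and $\JJ_n$ out of the automorphic Green's function and Niebur Poincar\'e series, and evaluates each $\langle j_n,\bgg_{\z}\rangle$ by the iterated Stokes identity of Lemma \ref{lem:innerggen} together with the Fourier and elliptic expansions of Lemmas \ref{lem:Fourierexps} and \ref{lem:ellexps}. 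Your argument is shorter and self-contained for $n=0$ precisely because $j_0=1$ has the constant $\Delta_0$-preimage built into $\jj$; it does not extend to $j_n$ with $n\geq 1$, where one needs the whole tower $\JJ_n\to\jj_n\to-\calJ_n\to j_n$ and the regularized inner product, which is what the paper's heavier machinery buys. One small point worth making explicit in your write-up: under the stated hypotheses no regularization of $\langle 1,\log|f|\rangle$ is needed (the integrand is $O(e^{-2\pi y})$ at the cusp and has integrable logarithmic singularities), and the remainder in $\log|f|=\ord_{\z}(f)\log|z-\z|+(\text{smooth})$ is harmonic, so the term $\log|f|\,\partial_n\jj$ on $\partial\mathcal{B}_{\eps}(\z)$ is genuinely $O(\eps\log\eps)$.
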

\begin{remark}
In \cite{Rohrlich}, Theorem \ref{thm:Rohrlich} was stated for $\jj-1$ instead. However, by the valence formula, these two statements are equivalent.

\end{remark}
The function $\jj$ is a weight zero sesquiharmonic Maass form i.e., it is invariant under the action of $\SL_2(\Z)$ and it is annihilated by $\xi_0\circ \xi_2 \circ \xi_0$, where $\xi_{\kappa}:=2i y^{\kappa} \overline{\frac{\partial}{\partial \overline{z}}}$ (see Section \ref{subspolar} for a full definition). More precisely, $\Delta_{0}(\jj)=1$, where $\Delta_{\kappa}:=-y^2(\frac{\partial^2}{\partial x^2}+\frac{\partial^2}{\partial y^2}) +i\kappa y (\frac{\partial}{\partial x} + i\frac{\partial}{\partial y})$ satisfies $\Delta_{\kappa}=-\xi_{2-\kappa}\circ\xi_{\kappa}$.

To extend this, let $j_1:=j-744$, with $j$ the usual $j$-invariant and set $j_n:=j_1|T_n$, where for a function $f$ transforming of weight $\kappa$, we define the $n$-th \begin{it}Hecke operator\end{it} by
\[
f|T_n (z):=\sum_{\substack{ad=n\\ d>0}} \ \ \sum_{b\pmod{d}} d^{-\kappa} f\left(\tfrac{az+b}{d}\right).
\]
There are functions $\jj_n$ defined in \eqref{eqn:Jxi} below whose properties are analogous to those of $\jj_0:=\jj$
if we define $j_0:=1$. Namely, these functions are weight zero sesquiharmonic Maass forms
that satisfy $\Delta_{0}\left(\jj_n\right) = j_n$
and are furthermore chosen uniquely so that the principal parts of their Fourier and elliptic expansions essentially only contain a single term which maps to the principal part of $j_n$ under $\Delta_0$. More precisely, they have a purely sesquiharmonic principal part, up to a possible constant multiple of $y$, vanishing constant terms in their Fourier expansion, and a trivial principal part in their elliptic expansions around every point in $\H$; see Lemmas \ref{lem:Fourierexps} and \ref{lem:ellexps} below for the shape of their Fourier and elliptic expansions, respectively. In addition, they also satisfy the following extension of Theorem \ref{thm:Rohrlich}. Here we use a regularized version of the inner product (see \eqref{eqn:innerdef} below), which we again denote by $\langle\ ,\, \rangle$. This regularization was first introduced by Petersson in \cite{Pe2} and then later independently rediscovered and generalized by Borcherds \cite{Bo1} and Harvey--Moore \cite{HM}.

\begin{theorem}\label{thm:jninner}
Suppose that $f$ is a meromorphic modular function with respect to $\SL_2(\Z)$ which has constant term one in its Fourier expansion. Then
\begin{equation*}
\left\langle j_n,\log|f|\right\rangle=-2\pi\sum_{\z\in \SL_2(\Z)\backslash\H}\frac{\ord_{\z}(f)}{\omega_{\z}}\jj_n(\z).
\end{equation*}
\end{theorem}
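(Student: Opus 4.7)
The plan is to prove Theorem \ref{thm:jninner} by Green's identity, following the blueprint of Rohrlich's original proof of Theorem \ref{thm:Rohrlich} with $\jj_n$ in place of $\jj = \jj_0$. Using the identity $\Delta_0 \jj_n = j_n$, we rewrite
\[
\left\langle j_n, \log|f|\right\rangle = \left\langle \Delta_0 \jj_n, \log|f|\right\rangle
\]
and apply the divergence theorem on the truncated fundamental domain $\mathcal F_{T,\eps} := \{z \in \mathcal F : y \leq T\} \setminus \bigcup_\z B_\eps(\z)$, where $\mathcal F$ is a fixed fundamental domain for $\SL_2(\Z) \backslash \H$, $\z$ runs over the finitely many zeros and poles of $f$ inside $\mathcal F$, and $B_\eps(\z)$ is a small geodesic disk of radius $\eps$. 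Because $\Delta_0 = -y^2(\partial_x^2 + \partial_y^2)$, the factor $y^{-2}$ from the hyperbolic measure cancels and Green's identity takes its usual Euclidean form; with $u = \jj_n$ and $v = \log|f|$ it equates the (regularized) interior integral to a boundary integral involving $v \nabla u - u \nabla v$.

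Since $\log|f|$ is harmonic on $\mathcal F_{T,\eps}$, the rearranged interior term involving $\Delta_0 \log|f|$ vanishes and only boundary contributions remain. The side identifications of $\mathcal F$ cancel by the $\SL_2(\Z)$-invariance of both $\jj_n$ and $\log|f|$. For each zero or pole $\z$, the local factorization $\log|f(z)| = \ord_\z(f)\log|z-\z| + h(z)$ with $h$ harmonic near $\z$, combined with the standard residue-style computation on $\partial B_\eps(\z)$, contributes $-2\pi \ord_\z(f) \jj_n(\z)/\omega_\z$ in the limit $\eps \to 0$; the factor $1/\omega_\z$ reflects that only a $1/\omega_\z$-fraction of a full small circle around an elliptic fixed point lies in $\mathcal F$ once the side identifications are accounted for. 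Summing over $\z \in \SL_2(\Z) \backslash \H$ yields the right-hand side of the theorem.

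The main obstacle is the horizontal cusp boundary at $y = T$ as $T \to \infty$. Since $f$ may have a pole at $i\infty$, both $\log|f|$ and $\jj_n$ may grow in $y$, so the underlying integral diverges and the regularized inner product in \eqref{eqn:innerdef} is genuinely needed. Inserting the Fourier expansions of $\jj_n$ (from Lemma \ref{lem:Fourierexps}) and of $f$, and integrating in $x$ first, reduces the analysis to the constant-in-$x$ modes. The hypothesis that $f$ has constant Fourier coefficient $1$ controls the constant-in-$x$ part of $\log|f|$ at $y=T$, and together with the designed vanishing of the constant Fourier coefficient of $\jj_n$ this produces exactly the polynomial-in-$T$ terms cancelled by the Petersson--Borcherds--Harvey--Moore regularization, leaving zero contribution in the limit. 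Piecing the three boundary computations together then yields the stated identity.
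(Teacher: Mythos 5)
Your argument is correct in its essentials, but it takes a genuinely different and more direct route than the paper. The paper does not prove Theorem \ref{thm:jninner} by integrating against $\log|f|$ directly: it first writes $\log|f|=\sum_{\z}\frac{\ord_\z(f)}{\omega_\z}\bgg_\z$ via the product formula \eqref{logfG}, reduces to the single-point identity $\langle j_n,\bgg_\z\rangle=-2\pi\jj_n(\z)+c_n$ of \eqref{eqn:jnbggz}, and evaluates that by the three-fold Stokes identity of Lemma \ref{lem:innerggen} applied to the biharmonic preimages $\JJ_n$ and $\GG_\z$, with the constant $c_n$ read off from Fourier coefficients of $\JJ_n$ and $\jj_n$ and removed at the end by the valence formula. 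Your approach is Rohrlich's original one-step Green's identity with $u=\jj_n$, $v=\log|f|$, exploiting only $\Delta_0(\jj_n)=j_n$; it avoids $\GG_\z$, $\JJ_n$, and the elliptic-expansion bookkeeping entirely, which is a real simplification for this particular statement. What the heavier machinery buys the authors is precisely what your argument does not give: the refined formula \eqref{eqn:jnbggz} with its explicit constant, the weight-$k$ extension of Theorem \ref{thm:jninnergen} (where $\log(y^{k/2}|f|)$ is no longer harmonic, so an interior term $\tfrac{k}{2}\langle \jj_n,1\rangle$ survives), and the identification of $\langle j_n,\bgg_\z\rangle$ with Fourier coefficients of $\GG_\z$ needed for Theorem \ref{thm:Fdivgen}.

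Two points should be shored up. First, Green's identity on $\mathcal F_{T}\setminus\bigcup_\z B_\eps(\z)$ with $\z$ ranging only over zeros and poles of $f$ requires that $\jj_n$ be smooth on all of $\H$; this is not automatic from $\Delta_0(\jj_n)=j_n$ and is exactly the content of Lemma \ref{lem:ellexps}\,(1) (trivial principal parts of the elliptic expansions of $\jj_n$), which you should invoke. Second, your description of the cusp boundary is slightly off: the regularization \eqref{eqn:innerdef} is a bare truncation-and-limit with no subtraction of divergent terms, so nothing is ``cancelled by the regularization.'' When $f$ is holomorphic at $i\infty$ with constant term one, one has $\log|f|=O(e^{-2\pi y})$ with vanishing constant-in-$x$ mode, and every term of the $y=T$ boundary integral tends to $0$ honestly (the exponentially growing $q^{-n}$-mode of $\jj_n$ from Lemma \ref{lem:Fourierexps} pairs against the exponentially decaying $n$-th mode of $\log|f|$, and the polynomially growing constant mode $c^{+-}_{\jj_n}(0)y+\delta_{n=0}(1+\log y)$ pairs against an exponentially small quantity). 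If $f$ had a pole at $i\infty$ this step would genuinely fail (and for $n=0$ the regularized inner product need not even converge), so your proof, like the paper's, tacitly requires $f$ to be holomorphic at $i\infty$ — consistent with the theorem being deduced in the paper from Theorem \ref{thm:jninnergen}, where that hypothesis is explicit.
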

\begin{remark}
This research was motivated by generalizations of Rohrlich's Theorem in other directions, such as the recent work of Herrero, Imamo$\overline{\text{g}}$lu, von Pippich, and T\'oth \cite{HIvPT}.
\end{remark}
Theorem \ref{thm:Rohrlich} was also generalized by Rohrlich \cite{Rohrlich} by replacing the meromorphic function $f$ in Theorem \ref{thm:Rohrlich} with a meromorphic modular form of weight $k$ times $y^{\frac{k}{2}}$, yielding again a weight zero object. We similarly
extend Theorem \ref{thm:jninner} in such a direction.

\begin{theorem}\label{thm:jninnergen}
There exists a constant $c_n$ such that for every weight $k$ meromorphic modular form $f$ with respect to $\SL_2(\Z)$ that does not have a pole at $i\infty$ and has constant term one in its Fourier expansion, we have
\[
\left<j_n,\log\!\left(y^{\frac{k}{2}}|f|\right)\right>=-2\pi\sum_{\z\in \SL_2(\Z)\backslash\H}\frac{\ord_{\z}(f)}{\omega_{\z}}\jj_n(\z)+\frac{k}{12}c_n.
\]
\end{theorem}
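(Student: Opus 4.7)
The strategy is to reduce Theorem \ref{thm:jninnergen} to Theorem \ref{thm:jninner} by comparing $f$ against a fixed reference weight $k$ meromorphic modular form $f_0$ on $\SL_2(\Z)$ with constant term one. Such an $f_0$ exists for every even $k$ (and the theorem is vacuous for $k$ odd, when the only weight $k$ modular form is zero): if we write $k=4a+6b$ with $a,b\in\Z$, the form $f_0:=E_4^a E_6^b$ is meromorphic of weight $k$, holomorphic and nonvanishing at $i\infty$, with $q$-expansion $1+O(q)$. The quotient $f/f_0$ is then a meromorphic weight zero modular function with constant term one, so Theorem \ref{thm:jninner} applies to it.

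My first step is the decomposition
\[
\log\!\left(y^{k/2}|f|\right)=\log\!\left(y^{k/2}|f_0|\right)+\log|f/f_0|,
\]
in which both summands on the right are individually $\SL_2(\Z)$-invariant. Pairing with $j_n$ via the regularized inner product, substituting Theorem \ref{thm:jninner} for $\langle j_n,\log|f/f_0|\rangle$, and using $\ord_\z(f/f_0)=\ord_\z(f)-\ord_\z(f_0)$ yields
\[
\left\langle j_n,\log\!\left(y^{k/2}|f|\right)\right\rangle = D_n(f_0)-2\pi\sum_{\z\in\SL_2(\Z)\backslash\H}\frac{\ord_\z(f)}{\omega_\z}\jj_n(\z),
\]
where $D_n(f_0):=\langle j_n,\log(y^{k/2}|f_0|)\rangle+2\pi\sum_\z\frac{\ord_\z(f_0)}{\omega_\z}\jj_n(\z)$. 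The theorem thus reduces to showing that $D_n(f_0)=\frac{k}{12}c_n$ for a constant $c_n$ depending only on $n$.

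The second step is to establish two purely algebraic properties of $D_n$. \emph{Property (i):} $D_n(f_0)$ depends only on the weight $k$. This follows because any two references $f_0,f_0'$ of the same weight have ratio a weight zero modular function with constant term one; applying Theorem \ref{thm:jninner} to $f_0/f_0'$ and telescoping the orders gives $D_n(f_0)=D_n(f_0')$. \emph{Property (ii):} $D_n$ is additive under multiplication of references. If $f_0,g_0$ are references of weights $k,k'$, then $f_0 g_0$ is a reference of weight $k+k'$, and both $\log|\cdot|$ and $\ord_\z(\cdot)$ are additive, so $D_n(f_0 g_0)=D_n(f_0)+D_n(g_0)$. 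Properties (i) and (ii) together force $D_n$, viewed as a function on even weights, to be linear; defining $c_n$ by $D_n(f_0)=\frac{k}{12}c_n$ for any single reference of nonzero weight yields the desired constant, independent of $f$, $f_0$, and $k$.

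The main technical obstacle I anticipate is in the first step: justifying that the regularized inner product is linear in its second argument when the three pieces $\log(y^{k/2}|f|)$, $\log(y^{k/2}|f_0|)$, and $\log|f/f_0|$ carry logarithmic singularities at generally distinct points in $\SL_2(\Z)\backslash\H$. Since the definition in \eqref{eqn:innerdef} involves a regularization (excising neighborhoods or inserting convergence factors), linearity has to be checked against a joint cutoff that simultaneously respects the union of all relevant singular loci of $f$ and $f_0$, and one must confirm that the three regularized integrals exist individually and combine as expected. Once this is in place the remainder of the argument is formal.
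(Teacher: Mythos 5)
Your reduction is circular as a proof of Theorem \ref{thm:jninnergen} within this paper's logical structure. You invoke Theorem \ref{thm:jninner} as a known result for the quotient $f/f_0$ and for the ratios of reference forms, but the paper never proves Theorem \ref{thm:jninner} independently: it is obtained only as the $k=0$ specialization of Theorem \ref{thm:jninnergen} (see the remark immediately following the theorem statement). All of the analytic content of the result — the construction of the biharmonic preimages $\JJ_n$ and $\GG_{\z}$, their Fourier and elliptic expansions, and the Stokes-type evaluation of the boundary integrals at $i\infty$ and around $\z$ — lives in the base case, and your proposal contains none of it. What you have correctly shown is that the passage from weight $0$ to general weight $k$ is formal: granting the $k=0$ case, the defect $D_n(f_0)$ depends only on $k$, is additive, and hence equals $\frac{k}{12}c_n$. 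That is a true and clean observation, but it establishes only the "relative" statement, not the theorem.

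For comparison, the paper's route is to factor $f$ into prime forms, so that $\log(y^{k/2}|f|)=\sum_{\z}\frac{\ord_{\z}(f)}{\omega_{\z}}\bgg_{\z}$ as in \eqref{logfG}, reducing both theorems simultaneously to the single identity $\langle j_n,\bgg_{\z}\rangle=-2\pi\jj_n(\z)+c_n$ of \eqref{eqn:jnbggz}; the $\frac{k}{12}$ then comes from the valence formula $\sum_{\z}\frac{\ord_{\z}(f)}{\omega_{\z}}=\frac{k}{12}$. That identity is proved by applying Lemma \ref{lem:innerggen} with $\mathbb{F}_1=\JJ_n$, $\mathbb{F}_2=\GG_{\z}$ and evaluating the contributions at $i\infty$ and at $\z$ term by term, which also yields the explicit value of $c_n$ in \eqref{definecn}. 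To turn your proposal into a proof you would have to supply an independent argument for Theorem \ref{thm:jninner} (or equivalently for \eqref{eqn:jnbggz}), and that is precisely where the work is; your correctly flagged concern about linearity of the regularized inner product across distinct singular loci is comparatively minor, since the paper relies on the same linearity when it splits $\log(y^{k/2}|f|)$ into the $\bgg_{\z}$.
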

\begin{remark}
Plugging $k=0$ into Theorem \ref{thm:jninnergen}, we see that Theorem \ref{thm:jninner} is an immediate corollary.
\end{remark}

An interesting special case of Theorem \ref{thm:jninner} arises if one takes $f$ to be a so-called \begin{it}prime form\end{it}, which is a modular form which vanishes at precisely one point $\z\in\H$ and has a simple zero at $\z$ (see \cite[Section 1.c]{Pe7} for a full treatment of these functions). By the valence formula, the prime forms necessarily have weight $k=12 \omega_{\z}^{-1}$ and may  directly be computed as
\[
\left(\Delta(z)\Big(j(z)-j(\z)\Big)\right)^{\frac{1}{\omega_{\z}}}.
\]
Multiplying by $y^{\frac{k}{2}}$ and taking the logarithm of the absolute value, it is hence natural to consider the functions
\begin{equation}\label{eqn:primeformgdef}
\bgg_{\z}(z):=\log\left(y^6\left|\Delta(z)\Big(j(z)-j(\z)\Big)\right|\right),
\end{equation}
and Theorem \ref{thm:jninner} states that
\begin{equation}\label{eqn:jnbggz}
\left<j_n,\bgg_{\z}\right> = -2\pi\jj_n(\z) + c_{n}.
\end{equation}
When characterizing modular forms via their divisors, the prime forms are natural building blocks because they vanish at precisely one point in $\H$, allowing one to easily construct a function with a given order of vanishing at each point. In the same way, since each function $\bgg_{\z}$ appearing on the left-hand side of \eqref{eqn:jnbggz} has a singularity at only one point and the single term $\jj_n(\z)$ is isolated on the right-hand side of \eqref{eqn:jnbggz}, it is natural to use the functions $\bgg_{\z}$ as building blocks for the logarithms of weight $k$ meromorphic modular forms.

\begin{remark}
If one were only interested in proving Theorem \ref{thm:jninner}, then one could choose the building blocks $z\mapsto\log|j(z)-j(\z)|$ instead of $\bgg_{\z}$.
However, as noted above, the functions $\bgg_{\z}$ are more natural when considering divisors of modular forms
because they only have a singularity precisely at the point $\z$, while the functions $z\mapsto \log|j(z)-j(\z)|$ have a singularity both at $\z$ and $i\infty$.
\end{remark}

Generating functions of traces of $\SL_2(\Z)$-invariant objects such as $j_n$ have a long history going back to the paper of Zagier on traces of singular moduli \cite{ZagierSingular}. To give a related example, let $\mathcal{Q}_D$ denote the set of integral binary quadratic forms of discriminant $D$. The generating function, with $\tau_Q\in\H$ the unique root of $Q(z,1)$,
\begin{equation}\label{eqn:tracejj}
\sum_{\substack{D<0\\ D\equiv 0,1\pmod{4}}} \sum_{Q\in \mathcal{Q}_D/\SL_2(\Z)} \jj\!\left(\tau_Q\right) e^{2\pi i |D|\tau}
\end{equation}
was shown by Bruinier and Funke \cite[Theorem 1.2]{BruinierFunkeTraces} to be the holomorphic part of a weight $\frac32$ modular object. Instead of taking the generating function in $D$, one may also sum in $n$ to obtain, for $y$ sufficiently large,
\begin{equation}\label{Hz}
H_{\z}(z):=\sum_{n\geq 0} j_n(\z) q^n.
\end{equation}
This function was shown by Asai, Kaneko, and Ninomiya \cite{AKN} to satisfy the identity
\[
H_{\z}(z)=-\frac{1}{2\pi i} \frac{j_1'(z)}{j_1(z)-j_1(\z)}.
\]
This identity is equivalent to the denominator formula
\[
j_1(\z)-j_1(z)=e^{-2\pi i\z} \prod_{m\in\N,\,n\in \Z}\left(1-e^{2\pi i m \z}e^{2\pi i n z}\right)^{c(mn)},
\]
for the Monster Lie algebra, where $c(m)$ denotes the $m$-th Fourier coefficient of $j_1$. The function $H_{\z}$ is a weight two meromorphic modular form with a simple pole at $z=\z$. For a meromorphic modular form $f$ which does not vanish at $i\infty$, it is then natural to define the \begin{it}divisor modular form\end{it}
\[
f^{\operatorname{div}}(z):=\sum_{\z\in\SL_2(\Z)\backslash\H}\frac{\ord_{\z}(f)}{\omega_{\z}} H_{\z}(z).
\]
Bruinier, Kohnen, and Ono \cite[Theorem 1]{BKO} showed that if $f$ satisfies weight $\kappa$ modularity then $f^{\operatorname{div}}$ is related to the logarithmic derivative of $f$ via
\[
f^{\operatorname{div}}=-\frac{1}{2\pi i} \frac{f'}{f} +\frac{\kappa}{12}E_2,
\]
where $E_2$ denotes the quasimodular weight two Eisenstein series.
Analogously to \eqref{Hz}, we define the generating function, for $y$ sufficiently large,
\[
\HH_{\z}(z):=\sum_{n\geq 0}\jj_n(\z) q^n,
\]
and its related divisor modular form
\[
\f^{\operatorname{div}}(z):=\sum_{\z\in\SL_2(\Z)\backslash\H}\frac{\ord_{\z}(f)}{\omega_{\z}}\HH_{\z}(z).
\]

The function $\HH_{\z}$ turns out to be the holomorphic part of a weight two sesquiharmonic Maass form, while the generating function
\begin{align*}
 \II_\z (z):=\sum_{n\geq 0}\langle \bgg_{\z},j_n\rangle q^n,
\end{align*}
which is closely related by Theorem \ref{thm:jninnergen}, is the holomorphic part of a biharmonic Maass form. A weight $\kappa$ biharmonic Maass form satisfies weight $\kappa$ modularity and is annihilated by $\Delta_{\kappa}^2=\left(\xi_{2-\kappa}\circ \xi_{\kappa}\right)^2$ (see Section \ref{sec:construction} for a full definition).
\begin{theorem}\label{thm:Fdivgen}
The function $\HH_\z$ is the holomorphic part of a weight two sesquiharmonic Maass form $\widehat{\HH}_{\z}$ and $\II_{\z}$ is the holomorphic part of a weight two biharmonic Maass form $\widehat{\II}_{\z}$.
\end{theorem}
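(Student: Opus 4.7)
The plan is to establish both claims via explicit non-holomorphic completions. The identity \eqref{eqn:jnbggz} yields
\[
\II_\z(z)=-2\pi\,\HH_\z(z)+\mathcal{C}(z),\qquad \mathcal{C}(z):=\sum_{n\geq 0}c_n q^n,
\]
so it suffices to exhibit a weight $2$ sesquiharmonic Maass form $\widehat{\HH}_\z$ with holomorphic part $\HH_\z$ and a weight $2$ biharmonic Maass form $\widehat{\mathcal{C}}$ with holomorphic part $\mathcal{C}$; their sum $\widehat{\II}_\z:=-2\pi\widehat{\HH}_\z+\widehat{\mathcal{C}}$ then provides the desired biharmonic completion of $\II_\z$.

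\textbf{Construction of $\widehat{\HH}_\z$.} Two natural routes are available. The first is to realise $\widehat{\HH}_\z$ as a weight $2$ Maass--Poincar\'e series in $z$ whose principal parts are designed so that the $n$-th Fourier coefficient of the holomorphic part equals $\jj_n(\z)$. The second is to extend the Asai--Kaneko--Ninomiya closed form
\[
H_\z(z)=-\frac{1}{2\pi i}\frac{j_1'(z)}{j_1(z)-j_1(\z)}
\]
to a two-variable kernel $K(z,\z)$ whose holomorphic $q$-expansion in $z$ reproduces $\HH_\z(z)$. Either route gives manifest weight $2$ modularity of $\widehat{\HH}_\z$ in $z$; convergence for $y$ large requires polynomial bounds on the Fourier coefficients of $\jj_n$ in $n$, which follow from standard estimates combined with the shape of the expansion supplied by Lemma~\ref{lem:Fourierexps}. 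Sesquiharmonicity in $z$ is then checked by a chain of $\xi$-operator computations: one identifies $\xi_{2}\widehat{\HH}_\z$ (up to a constant) with a completion of the weight $0$ object whose $\xi_0$-image is the meromorphic form $H_\z$, so that two further applications of $\xi_0$ and $\xi_2$ annihilate the result.

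\textbf{Construction of $\widehat{\mathcal{C}}$ and conclusion.} The constants $c_n$ arise from the $\frac{k}{12}c_n$ correction in Theorem~\ref{thm:jninnergen}, which is the direct analogue of the $\frac{\kappa}{12}E_2$ term in the Bruinier--Kohnen--Ono identity for divisor modular forms. By that analogy I expect $\mathcal{C}$ to be, up to an additive constant, a constant multiple of the quasimodular Eisenstein series $E_2$, whose standard completion $E_2^*(z)=E_2(z)-3/(\pi y)$ is weight $2$ harmonic and hence a fortiori biharmonic; in any case, it suffices to exhibit $\mathcal{C}$ as the holomorphic part of some weight $2$ biharmonic Maass form. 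Adding $-2\pi\widehat{\HH}_\z$ to $\widehat{\mathcal{C}}$ then produces $\widehat{\II}_\z$ as the sum of a sesquiharmonic and a biharmonic weight $2$ Maass form, which is biharmonic and has holomorphic part $\II_\z$.

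\textbf{Main obstacle.} The principal difficulty is the verification of weight $2$ modularity of $\widehat{\HH}_\z$ in the variable $z$. Completing the Fourier series coefficient-by-coefficient in $\z$ is automatic, but organising the result into a manifestly $z$-modular object requires either an explicit two-variable closed form in the spirit of Asai--Kaneko--Ninomiya, which must be guessed and then verified by direct computation, or a Poincar\'e-series realisation of $\widehat{\HH}_\z$, which requires uniform estimates on the Fourier coefficients of the $\jj_n$. Once modularity is in place, sesquiharmonicity of $\widehat{\HH}_\z$, the explicit identification of $\mathcal{C}$, and the biharmonicity of $\widehat{\II}_\z$ should all reduce to essentially routine computations with the $\xi$-operators.
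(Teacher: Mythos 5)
Your write-up is a plan rather than a proof: the step you yourself flag as the main obstacle --- producing a weight two modular object in $z$ whose holomorphic Fourier coefficients are the numbers $\jj_n(\z)$, resp.\ $\langle \bgg_\z,j_n\rangle$ --- is precisely the content of the theorem, and neither of your two proposed routes (a Maass--Poincar\'e series or a two-variable kernel ``to be guessed and verified'') is carried out. The paper closes this gap not by completing the $q$-series coefficient-by-coefficient, but by starting from functions that are modular by construction: it sets $\widehat{\II}_{\z}:=\GG_{\z}$ and $\widehat{\HH}_{\z}:=-\frac{1}{2\pi}\GG_{\z}-\frac{1}{4\pi}\mathbb{E}$, where $\GG_{\z}$ and $\mathbb{E}$ were already built in Section \ref{sec:construction} from $\frac{\partial}{\partial s}\xi_0(G_{\overline{s}}(z,\z))$ and $\frac{\partial}{\partial s}\xi_0(E(z,\overline{s}))$ at $s=1$, and so inherit $\SL_2(\Z)$-invariance in $z$ from the Green's function and the real-analytic Eisenstein series. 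The theorem then reduces to the coefficient identity $\langle j_n,\bgg_{\z}\rangle=c_{\GG_{\z}}^{+++}(n)$, proved by Stokes' theorem (pairing $j_n$ against $\bgg_{\z}=\xi_2(\GG_{\z})$ and reading off the boundary contributions at $i\infty$ and at $\z$ from the Fourier and elliptic expansions of Lemmas \ref{lem:Fourierexps} and \ref{lem:ellexps}); combining this with \eqref{Gzj} and a second Stokes computation of $\langle j_n,\mathcal{E}\rangle$ identifies $\jj_n(\z)$ with the $n$-th coefficient of the holomorphic part of $\widehat{\HH}_{\z}$. None of this pairing machinery appears in your proposal, and without it there is no proof.

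Two further points. First, your expectation that the generating function $\mathcal{C}(z)=\sum_{n\geq 0}c_nq^n$ of the constants is essentially $E_2$, with the harmonic completion $E_2(z)-\frac{3}{\pi y}$, is not what happens: by \eqref{definecn} and \eqref{duality} one has $c_n=-\frac{1}{2}c_{\mathbb{E}}^{+++}(n)$, so the required completion is $-\frac{1}{2}\mathbb{E}$, a genuinely depth-four object whose expansion contains $4\pi\log(y)$ and $\frac{12}{y}(1+\log(y))$ terms and whose $\xi_2$-image is the non-constant function $\mathcal{E}$; it is not harmonic, and exhibiting it is part of the work. Second, the sesquiharmonicity of $\widehat{\HH}_{\z}$ is not automatic from its being a linear combination of biharmonic forms --- it rests on the exact cancellation of the biharmonic parts $-\frac{6}{y}(1+\log(y))$ of $\GG_{\z}$ and $\frac{12}{y}(1+\log(y))$ of $\mathbb{E}$ in the combination $2\GG_{\z}+\mathbb{E}$, which again requires the explicit expansions of Lemma \ref{lem:Fourierexps}. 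So the ``routine $\xi$-operator computations'' you defer are in fact where the proof lives.
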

\begin{remark}
Consider
\[
\Theta(z,\tau):=\sum_{n\geq 0} \sum_{\substack{D<0\\ D\equiv 0,1\pmod{4}}} \sum_{Q\in \mathcal{Q}_D/\SL_2(\Z)} \jj_n\!\left(\tau_Q\right) e^{2\pi i |D|\tau} e^{2 \pi i nz}.
\]
The modularity of \eqref{eqn:tracejj} hints that $\tau\mapsto\Theta(z,\tau)$ may have a relation to a function satisfying weight $\frac32$ modularity, while  we see in Theorem \ref{thm:Fdivgen} that it is the holomorphic part of a weight two object as a function of $z$ (assuming convergence). It should be possible to prove this modularity using the methods from \cite{BES} (and which in particular requires generalizing Proposition 1.3 of \cite{BES} to include functions which have poles in points of the upper half plane).
\end{remark}
As a corollary to Theorem \ref{thm:Fdivgen}, one obtains that $\f^{\operatorname{div}}$ has the modular completion
\[
\widehat{\f}^{\operatorname{div}}(z):=\sum_{\z\in\SL_2(\Z)\backslash\H}\frac{\ord_{\z}(f)}{\omega_{\z}}\widehat{\HH}_{\z}(z).
\]
\begin{corollary}\label{cor:Fdiv}
If $f$ is a meromorphic modular function with constant term one in its Fourier expansion, then the function $\f^{\operatorname{div}}$ is the holomorphic part of the Fourier expansion of a weight two sesquiharmonic Maass form $\widehat{\f}^{\operatorname{div}}$. Moreover we have
\[
\xi_{2}\left(\widehat{\f}^{\operatorname{div}}\right)=-\frac{1}{2\pi}\log|f|.
\]
\end{corollary}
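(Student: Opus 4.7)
The plan is to deduce the corollary almost directly from Theorem~\ref{thm:Fdivgen}, combined with a factorization of $f$ into ``$j$-type'' building blocks. The first assertion is essentially formal: because $f$ is meromorphic, its divisor on $\SL_2(\Z)\backslash\H$ is finite, so $\f^{\operatorname{div}}$ is a finite linear combination of the functions $\HH_\z$. By Theorem~\ref{thm:Fdivgen} each $\HH_\z$ is the holomorphic part of a weight two sesquiharmonic Maass form $\widehat{\HH}_\z$, and taking the same finite linear combination of the completions yields $\widehat{\f}^{\operatorname{div}}$, which is therefore itself weight two sesquiharmonic with holomorphic part $\f^{\operatorname{div}}$.

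For the $\xi_2$-identity, by linearity of $\xi_2$ one has
\[
\xi_2\!\left(\widehat{\f}^{\operatorname{div}}\right)=\sum_{\z\in\SL_2(\Z)\backslash\H}\frac{\ord_{\z}(f)}{\omega_{\z}}\,\xi_2\!\left(\widehat{\HH}_{\z}\right),
\]
so the task reduces to computing $\xi_2(\widehat{\HH}_\z)$. I expect the explicit non-holomorphic completion $\widehat{\HH}_\z$ constructed in the proof of Theorem~\ref{thm:Fdivgen} to yield the clean identity
\[
\xi_2\!\left(\widehat{\HH}_{\z}\right)=-\frac{1}{2\pi}\,\bgg_{\z},
\]
with $\bgg_\z$ the prime-form log from \eqref{eqn:primeformgdef}. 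This is the natural candidate: $\widehat{\HH}_\z$ is the sesquiharmonic lift of $\sum_{n\geq 0}\jj_n(\z)q^n$, while the coefficient identity \eqref{eqn:jnbggz} expresses $\jj_n(\z)$ (up to the constant $c_n$) as $-\tfrac{1}{2\pi}\langle j_n,\bgg_{\z}\rangle$, and the $\xi_2$-image must be a weight zero object with its only singularity at $\z$.

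To conclude, I would rewrite $\log|f|$ in terms of the $\bgg_\z$. Under the hypotheses (weight zero, no pole at $i\infty$, constant term one), the valence formula gives $\sum_{\z}\ord_\z(f)/\omega_\z=0$, and the modular function
\[
\prod_{\z\in\SL_2(\Z)\backslash\H}\bigl(j(z)-j(\z)\bigr)^{\ord_\z(f)/\omega_\z}
\]
has the same divisor as $f$ and, by a direct Fourier-expansion check (each factor contributes $q^{-\ord_\z(f)/\omega_\z}(1+O(q))$ at $i\infty$), the same constant term one, so it equals $f$ identically. Splitting $\bgg_\z(z)=\log(y^6|\Delta(z)|)+\log|j(z)-j(\z)|$ and using $\sum_{\z}\ord_\z(f)/\omega_\z=0$ to kill the $\log(y^6|\Delta|)$ contribution then gives
\[
\sum_{\z}\frac{\ord_{\z}(f)}{\omega_{\z}}\,\bgg_{\z}(z)=\log|f(z)|,
\]
and combining with the previous step yields the claimed formula.

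The main obstacle is the middle step, namely identifying $\xi_2(\widehat{\HH}_\z)$ explicitly; this is where the real work sits and must be extracted from the construction of the sesquiharmonic completion carried out in the proof of Theorem~\ref{thm:Fdivgen}. The remaining steps are formal: finite linearity in $\z$, the valence formula, and the observation that modular functions on $\SL_2(\Z)$ with the stated normalization are uniquely determined by their divisor.
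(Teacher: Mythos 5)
Your architecture is the paper's: the corollary is read off from Theorem \ref{thm:Fdivgen} by linearity, a prime-form factorization of $f$, and the valence formula. But the one step you single out as ``where the real work sits'' is the one you get wrong, and your guessed identity is not merely unverified --- it is impossible. The completion is defined in \eqref{eqn:HHdef} as $\widehat{\HH}_{\z}=-\frac{1}{2\pi}\GG_{\z}-\frac{1}{4\pi}\mathbb{E}$, so by \eqref{eqn:HGxi} and \eqref{eqn:xiboldE} one has
\[
\xi_2\!\left(\widehat{\HH}_{\z}\right)=-\frac{1}{2\pi}\bgg_{\z}-\frac{1}{4\pi}\mathcal{E},
\]
not $-\frac{1}{2\pi}\bgg_{\z}$. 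The Eisenstein correction is forced: $\widehat{\HH}_{\z}$ is sesquiharmonic of weight two, so $\xi_2(\widehat{\HH}_{\z})$ must be harmonic of weight zero, whereas $\Delta_0(\bgg_{\z})=6$ by Lemma \ref{lem:Greensprop} (3); since $\Delta_0(\mathcal{E})=-12$ by the diagram \eqref{eqn:diagram}, the combination above is exactly what is annihilated by $\Delta_0$. So the clean identity you propose for an individual $\z$ cannot hold.

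Fortunately the defect is $\z$-independent, and your own final step already contains the mechanism that removes it: for a weight zero $f$ with $\ord_{i\infty}(f)=0$ the valence formula gives $\sum_{\z}\ord_{\z}(f)/\omega_{\z}=0$, so the $-\frac{1}{4\pi}\mathcal{E}$ terms cancel when summed against these weights, leaving $\xi_2(\widehat{\f}^{\operatorname{div}})=-\frac{1}{2\pi}\sum_{\z}\frac{\ord_{\z}(f)}{\omega_{\z}}\bgg_{\z}=-\frac{1}{2\pi}\log|f|$ by \eqref{logfG} (your prime-form factorization argument is a correct rederivation of that identity in the weight zero case). With the corrected middle identity substituted in, your proof closes and coincides with the paper's one-line argument; as written, however, the pivotal claim is false and the proof has a gap.
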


The paper is organized as follows. In Section \ref{sec:specialfunctions}, we introduce some special functions. In Section \ref{sec:construction}, we construct a number of functions and discuss their properties. In Section \ref{sec:FourierElliptic}, we determine the shapes of the Fourier and elliptic expansions of the functions defined in Section \ref{sec:construction}. In Section \ref{sec:jninner}, we compute inner products in order to prove Theorem \ref{thm:jninnergen}.
 In Section \ref{sec:Fdiv}, we consider the generating functions of these inner products and prove Theorem \ref{thm:Fdivgen} and Corollary \ref{cor:Fdiv}.
\section*{Acknowledgements}
The authors thank Claudia Alfes-Neumann, Stephan Ehlen, and Markus Schwagenscheidt for helpful comments on an earlier version of this paper and the referee for carefully reading our paper.

\section{Special functions, Poincar\'e series, and polyharmonic Maass forms}\label{sec:specialfunctions}
\subsection{The incomplete gamma function and related functions}
We use the principal branch of the complex logarithm, denoted by $\Log$, with the convention that, for $w>0$,
$$
\Log(-w)=\log(w)+\pi i,
$$
where $\log: \R^+ \to \R$ is the natural logarithm.

For $s,w \in\mathbb{C}$ with $\Re(w)>0$, define the {\it generalized exponential integral} $E_s$ (see \cite[8.19.3]{NIST}) by
\begin{equation*}
E_s(w) := \int_1^{\infty} e^{-wt}t^{-s}\, dt.
\end{equation*}
This function is related to the incomplete Gamma function,
defined for $\re(s)>0$ and $w\in\C$ by
\[
\Gamma(s,w) := \int_{w}^\infty e^{-t}t^{s}\, \frac{dt}{t},
\]
via (see \cite[8.19.1]{NIST})
\begin{equation}\label{eqn:GammaEr}
\Gamma(s, w) = w^s E_{1-s}(w).
\end{equation}
Up to a branch cut along the negative real line, the function $E_1$ may be analytically continued via
\begin{equation*}
E_1(w) = \Ein(w) -  \Log(w) - \gamma
\end{equation*}
(see \cite[6.2.4]{NIST}), where $\gamma$ is the Euler-Mascheroni constant and $\Ein$ is the entire function given by
\begin{equation*}
\Ein(w) :=\int_0^{w} \left(1-e^{-t}\right)\frac{dt}{t}=\sum_{n\geq 1}\frac{(-1)^{n+1}}{n!\, n} w^n.
\end{equation*}
The function $\Ein$ also appears in the definition of the exponential integral, namely
\begin{equation*}
\Ei(w):=-\Ein(-w)+\Log(w)+\gamma.
\end{equation*}
In order to obtain a function which is real-valued for $w\in\R\setminus\{0\}$, for $\kappa\in\Z$ it is natural to define
\begin{equation*}
W_\kappa(w) := (-2w)^{1-\kappa} \Re(E_\kappa(-2w)).
\end{equation*}
By \cite[Lemma 2.4]{BDE}, \eqref{eqn:GammaEr}, and the fact that $E_\kappa(w)\in\R$ for $w>0$, we obtain the following.
\begin{lemma}\label{lem:relationsofnonhol}
	For $w>0$, we have
	\begin{equation*}
	W_\kappa(w)  = (-1)^{1-\kappa}\left((2w)^{1-\kappa}E_\kappa(-2w) + \frac{\pi i}{\Gamma(\kappa)}\right)
	= \Gamma(1-\kappa, -2w)+\frac{(-1)^{1-\kappa}\pi i}{(\kappa-1)!}.
	\end{equation*}
	For $w<0$, we have
	\begin{equation*}
	W_\kappa(w) = \Gamma(1-\kappa,-2w).
	\end{equation*}
\end{lemma}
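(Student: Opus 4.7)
The plan is to split into the two cases $w>0$ and $w<0$ and in each case reduce everything to the identity \eqref{eqn:GammaEr}, namely $\Gamma(1-\kappa,z)=z^{1-\kappa}E_\kappa(z)$ with $z=-2w$, after carefully handling the principal branch of $z^{1-\kappa}$.

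First, I would dispose of the easy case $w<0$. Here $-2w>0$, so by the stated fact $E_\kappa(-2w)\in\R$, and hence $\Re(E_\kappa(-2w))=E_\kappa(-2w)$. Moreover $(-2w)^{1-\kappa}$ is an unambiguous positive real. Substituting into the definition of $W_\kappa$ and applying \eqref{eqn:GammaEr} with $s=1-\kappa$ immediately gives $W_\kappa(w)=(-2w)^{1-\kappa}E_\kappa(-2w)=\Gamma(1-\kappa,-2w)$.

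Next, for $w>0$ the argument $-2w$ lies on the branch cut of $\Log$, so I would start by recording that, with the convention $\Log(-2w)=\log(2w)+\pi i$ fixed at the start of Section 2, the principal branch gives
\[
(-2w)^{1-\kappa}=e^{(1-\kappa)\Log(-2w)}=(-1)^{1-\kappa}(2w)^{1-\kappa}
\]
for integer $\kappa$. The essential input is now \cite[Lemma 2.4]{BDE}, which describes $E_\kappa$ along the negative real axis and yields the jump formula $\Im(E_\kappa(-2w))=-\pi(2w)^{\kappa-1}/(\kappa-1)!$ (equivalently, the imaginary part of $E_\kappa$ coming from the branch of $\Log$ in its analytic continuation). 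Writing $\Re(E_\kappa(-2w))=E_\kappa(-2w)-i\,\Im(E_\kappa(-2w))$ and substituting into the definition of $W_\kappa(w)$ gives
\[
W_\kappa(w)=(-1)^{1-\kappa}(2w)^{1-\kappa}\!\left(E_\kappa(-2w)+\tfrac{i\pi(2w)^{\kappa-1}}{(\kappa-1)!}\right)=(-1)^{1-\kappa}\!\left((2w)^{1-\kappa}E_\kappa(-2w)+\tfrac{\pi i}{\Gamma(\kappa)}\right),
\]
which is the first claimed equality. The second equality then follows by using the branch computation above to absorb $(-1)^{1-\kappa}(2w)^{1-\kappa}$ into $(-2w)^{1-\kappa}$ and invoking \eqref{eqn:GammaEr} once more to identify $(-2w)^{1-\kappa}E_\kappa(-2w)$ with $\Gamma(1-\kappa,-2w)$, leaving the extra imaginary constant $(-1)^{1-\kappa}\pi i/(\kappa-1)!$ on the side.

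The only substantive point, and therefore the only place where care is needed, is the branch-cut computation for $w>0$: all the other manipulations are formal identities from \eqref{eqn:GammaEr} plus the elementary relation $\Re(z)=z-i\,\Im(z)$. Once the correct value of $\Im(E_\kappa(-2w))$ is imported from \cite[Lemma 2.4]{BDE}, both displayed identities drop out by a short algebraic rearrangement.
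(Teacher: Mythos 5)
Your proposal is correct and uses exactly the same three ingredients the paper cites for this lemma — the branch behaviour of $E_\kappa$ on the negative real axis from \cite[Lemma 2.4]{BDE}, the relation \eqref{eqn:GammaEr}, and the reality of $E_\kappa$ on the positive reals — merely spelling out the algebra the paper leaves implicit. The branch computation $(-2w)^{1-\kappa}=(-1)^{1-\kappa}(2w)^{1-\kappa}$ and the jump $\Im(E_\kappa(-2w))=-\pi(2w)^{\kappa-1}/(\kappa-1)!$ are handled correctly, so nothing is missing.
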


We next define
\begin{equation*}
\bm{\W}_s(w):=\int\limits_{\sgn(w)\infty}^w W_{2-s}(-t) t^{-s}e^{2t}dt.
\end{equation*}

A direct calculation shows the following lemma.
\begin{lemma}\label{lem:xiFourier}
For $m\in \Z \backslash \{0\}$ we have
\begin{align*}
\xi_{\kappa}\!\left(W_\kappa(2\pi my)q^m\right)&=
-(-4\pi m)^{1-\kappa}
q^{-m},\qquad \xi_\kappa\!\left(\bm{\W}_{\kappa}(2\pi my)q^m\right)=(2\pi m)^{1-\kappa} W_{2-\kappa}(-2\pi my) q^{-m}.
\end{align*}
In particular, $W_\kappa(2\pi my)q^m$ is annihilated by $\Delta_{\kappa}$, and $\bm{\W}_{\kappa}(2\pi my)q^m$ is annihilated by $\xi_{\kappa}\circ\Delta_{\kappa}$.
\end{lemma}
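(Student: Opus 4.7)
The proof is a direct computation based on the general identity
\[
\xi_\kappa\!\left(g(y)\, q^m\right) \;=\; y^\kappa\, g'(y)\, q^{-m} e^{-4\pi m y},
\]
valid for any real-valued smooth function $g$ depending only on $y$. To derive this, I would expand $\partial/\partial\bar z = \tfrac{1}{2}(\partial_x + i\partial_y)$, use that $(\partial_x + i\partial_y) q^m = 0$ so that only the $y$-derivative of $g$ survives, and then take the complex conjugate via $\overline{q^m} = q^{-m} e^{-4\pi m y}$ before multiplying by $2iy^\kappa$.

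For the first identity, I would compute $W_\kappa'$ uniformly on $\R\setminus\{0\}$. By Lemma \ref{lem:relationsofnonhol}, $W_\kappa(w)$ equals $\Gamma(1-\kappa, -2w)$ up to an imaginary constant which differs on the two signs of $w$ but drops out upon differentiation; combined with the standard identity $\partial_w \Gamma(s,w) = -e^{-w} w^{s-1}$ and the chain rule, this yields
\[
W_\kappa'(w) \;=\; 2\, e^{2w}\,(-2w)^{-\kappa}.
\]
Setting $g(y) = W_\kappa(2\pi m y)$ gives $g'(y) = 4\pi m\, e^{4\pi m y}(-4\pi m y)^{-\kappa}$. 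Substituting into the general identity, the exponentials cancel and $y^\kappa y^{-\kappa} = 1$, leaving $4\pi m(-4\pi m)^{-\kappa} q^{-m} = -(-4\pi m)^{1-\kappa} q^{-m}$, as claimed.

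The computation of $\xi_\kappa$ on $\bm{\W}_\kappa(2\pi m y)\,q^m$ is essentially the same: the fundamental theorem of calculus applied to the integral defining $\bm{\W}_\kappa$ produces $\bm{\W}_\kappa'(w) = W_{2-\kappa}(-w)\, w^{-\kappa}\, e^{2w}$, so plugging $g(y) = \bm{\W}_\kappa(2\pi m y)$ into the general identity again cancels the exponential and the power of $y$, yielding $(2\pi m)^{1-\kappa} W_{2-\kappa}(-2\pi m y)\, q^{-m}$.

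For the annihilation statements, recall $\Delta_\kappa = -\xi_{2-\kappa}\circ\xi_\kappa$, and that $\xi$ applied to any function holomorphic in $z$ vanishes. The first identity shows $\xi_\kappa(W_\kappa(2\pi m y)\, q^m)$ is a constant multiple of $q^{-m}$, hence is killed by $\xi_{2-\kappa}$; this gives $\Delta_\kappa(W_\kappa(2\pi m y) q^m) = 0$. For $\bm{\W}_\kappa(2\pi m y)\, q^m$, applying the already-proved first identity (with $\kappa\mapsto 2-\kappa$ and $m\mapsto -m$) to the output of $\xi_\kappa$ yields a constant multiple of $q^m$; this is again holomorphic in $z$, so the outer $\xi_\kappa$ in $\xi_\kappa\circ\Delta_\kappa$ annihilates it. The only subtle point in the whole argument is the branch/sign issue for $W_\kappa'$ across $w>0$ and $w<0$, which is exactly what Lemma \ref{lem:relationsofnonhol} is set up to resolve.
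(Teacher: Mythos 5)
Your computation is correct and is exactly the ``direct calculation'' the paper alludes to without writing out: the identity $\xi_\kappa(g(y)q^m)=y^\kappa g'(y)q^{-m}e^{-4\pi my}$ for real-valued $g$, the evaluation of $W_\kappa'$ via Lemma \ref{lem:relationsofnonhol} (with the locally constant imaginary shift dropping out), the fundamental theorem of calculus for $\bm{\W}_\kappa'$, and the observation that the outputs are holomorphic multiples of $q^{\mp m}$ all check out. No gaps.
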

We next determine the asymptotic behavior for $W_\kappa$ and $\bm{\W}_\kappa$.
\begin{lemma}\label{lem:Wgrowth}
As $w\to \pm \infty$, we have
\begin{align*}
W_\kappa(w)&=
(-2w)^{-\kappa}
 e^{2w}+O\!\left(w^{-\kappa-1}e^{2w}\right),\qquad \bm{\W}_\kappa(w)=
-2^{\kappa-2}
w^{-1}+O\!\left(w^{-2}\right).
\end{align*}
\end{lemma}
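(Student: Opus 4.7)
The plan is to derive both asymptotics from the classical asymptotic expansion of the generalized exponential integral,
\[
E_s(z)\sim \frac{e^{-z}}{z}\sum_{k\geq 0}(-1)^k\frac{(s)_k}{z^k}
\]
as $|z|\to\infty$ in any sector $|\arg(z)|\leq \frac{3\pi}{2}-\delta$ (NIST 8.20.2), which covers both $\arg(z)=0$ and $\arg(z)=\pi$, the two cases arising from $z=-2w$ with $w$ real. Truncating after the first term gives $E_\kappa(-2w)=\frac{e^{2w}}{-2w}+O(w^{-2}e^{2w})$ as $w\to\pm\infty$. Substituting into $W_\kappa(w)=(-2w)^{1-\kappa}\Re(E_\kappa(-2w))$ produces $(-2w)^{-\kappa}e^{2w}+O(w^{-\kappa-1}e^{2w})$; the only subtlety is that for $w>0$ the function $E_\kappa$ sits on its branch cut, so $E_\kappa(-2w)$ acquires an imaginary part. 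By Lemma~\ref{lem:relationsofnonhol} this imaginary part is exactly cancelled, up to the polynomial-size constant $\frac{(-1)^{1-\kappa}\pi i}{(\kappa-1)!}$, which is absorbed into the error term. For $w<0$ the same lemma identifies $W_\kappa(w)$ with $\Gamma(1-\kappa,-2w)$, whose asymptotic $\Gamma(s,z)\sim z^{s-1}e^{-z}$ as $z\to+\infty$ gives the stated formula directly.

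For $\bm{\W}_\kappa$, I would plug the asymptotic just derived (with $\kappa$ replaced by $2-\kappa$, evaluated at $-t$) into the integrand. This yields
\[
W_{2-\kappa}(-t)\,t^{-\kappa}e^{2t}=\left((2t)^{\kappa-2}e^{-2t}+O\!\left(t^{\kappa-3}e^{-2t}\right)\right)t^{-\kappa}e^{2t}=2^{\kappa-2}t^{-2}+O\!\left(t^{-3}\right)
\]
as $t\to\pm\infty$. Integrating from $\sgn(w)\infty$ to $w$ is now elementary: the leading piece contributes $\left[-2^{\kappa-2}t^{-1}\right]_{\sgn(w)\infty}^{w}=-2^{\kappa-2}w^{-1}$, since the boundary term at infinity vanishes, and the error piece contributes $O(w^{-2})$ because $t^{-3}$ is integrable near infinity. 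This is exactly the claimed expansion.

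The main obstacle is the bookkeeping of branches and imaginary parts in the first half: one needs to be sure that the imaginary contribution from $E_\kappa(-2w)$ for $w>0$ stays at the polynomial scale dictated by Lemma~\ref{lem:relationsofnonhol} and is therefore negligible next to the exponentially large leading term $(-2w)^{-\kappa}e^{2w}$. Once this is in hand, both asymptotics reduce to a single application of the standard asymptotic of $E_s$, followed in the second case by a termwise integration of an integrand whose error is already integrable.
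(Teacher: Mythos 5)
Your proposal is correct and follows essentially the same route as the paper, which simply cites the asymptotic $\Gamma(1-\kappa,-2w)=(-2w)^{-\kappa}e^{2w}(1+O(w^{-1}))$ from \cite[8.11.2]{NIST} (equivalent via \eqref{eqn:GammaEr} to the $E_s$ expansion you use) together with Lemma \ref{lem:relationsofnonhol}, and leaves the substitution into the integrand of $\bm{\W}_\kappa$ and the termwise integration to the reader. Your write-up just supplies those omitted details, including the correct observation that the purely imaginary constant for $w>0$ is swallowed by the exponentially large error term.
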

\begin{proof}
It is not hard to conclude the claims from (see \cite[8.11.2]{NIST})
\[
\Gamma(1-\kappa,-2w)=
(-2w)^{-\kappa}
e^{2w}\!\left(1+O\!\left(w^{-1}\right)\right). \qedhere
\]
\end{proof}

\subsection{Polar polyharmonic Maass forms}\label{subspolar}
In this section, we introduce polar polyharmonic Maass forms. Letting $\xi_{\kappa}^{\ell}$ denote the $\xi$-operator repeated $\ell$ times, a \begin{it}polar polyharmonic Maass form on $\SL_2(\Z)$ of weight $\kappa\in 2\Z$ and depth $\ell\in\N_0$\end{it} is a function $F:\H\to\C$ satisfying the following:
\noindent

\noindent
\begin{enumerate}[leftmargin=*, label={\rm(\arabic*)}]
\item For every $\gamma=\left(\begin{smallmatrix} a&b\\c&d\end{smallmatrix}\right)\in\SL_2(\Z)$, we have
\[
F|_{
\kappa
}\gamma(z):=(cz+d)^{-
\kappa
} F\left(\tfrac{az+b}{cz+d}\right) = F(z).
\]
\item The function $F$ is annihilated by $\xi_{
\kappa
}^{\ell}$.

\item For each $\z\in\H$, there exists an $m_{\z}\in\N_0$ such that $\lim_{z\to\z}(r_{\z}^{m_{\z}}(z)F(z))$ exists, where
\begin{equation}\label{eqn:rXdef}
r_{\z}(z):=|X_{\z}(z)|\quad \text{ with }X_{\z}(z):=\tfrac{z-\z}{z-\overline{\z}}.
\end{equation}
\item The function $F$ grows at most linear exponentially as $z\to i\infty$.

\end{enumerate}
\begin{remark}
	Note that in (2), $\xi_\kappa^\ell$ can be written in terms of $\Delta_\kappa$ if $\ell$ is even. If a function satisfies (2) (but is not necessarily modular), then we simply call it \begin{it}depth $\ell$ with respect to the weight $\kappa$.\end{it}
\end{remark}
\begin{remark}
	Note that our notation differs from that in \cite{LR}.
In particular, if the depth is $\ell$ in this paper, then it is depth $\frac{\ell}{2}$ in \cite{LR}.
\end{remark}
We omit the adjective ``polar'' whenever the only possible singularity occurs at $i\infty$. Note that polar polyharmonic Maass forms of depth one are meromorphic modular forms. We call a polar polyharmonic Maass form $F$ of depth two a \begin{it}polar harmonic Maass form\end{it} and those of depth three are \begin{it}polar sesquiharmonic Maass forms\end{it}. We call those forms of depth four \begin{it}biharmonic\end{it}.

\subsection{Niebur Poincar\'e series}\label{sec:seeds}

We next recall the Niebur Poincar\'e series \cite{Niebur}
\[
F_{m}(z,s):=\sum_{\gamma\in\Gamma_{\infty}\backslash\SL_2(\Z)} \varphi_{m,s}(\gamma z), \quad  (\Re(s)>1, m \in \Z)
\]
where $\Gamma_\infty:=\{\pm \left(\begin{smallmatrix} 1 & n \\ 0 & 1\end{smallmatrix}\right)\,:\, n\in\Z\}$ and
\begin{equation*}
\varphi_{m,s}(z):= y^{\frac{1}{2}} I_{s-\frac{1}{2}}(2\pi|m|y)e^{2\pi i mx}.
\end{equation*}
Here $I_{\kappa}$ is the $I$-Bessel function of order $\kappa$. The functions $s\mapsto F_{m}(z,s)$ have meromorphic continuations to $\C$ and do not have poles at $s=1$ \cite[Theorem 5]{Niebur}. The functions $\varphi_{m,s}$ are eigenfunctions under $\Delta_0$ with eigenvalue $s(1-s)$. Hence for any $s$ with $\re(s)$ sufficiently large so that $z\mapsto F_{m}(z,s)$ converges absolutely and locally uniformly, we conclude that $F_{m}(z,s)$ is also an eigenfunction under $\Delta_0$ with eigenvalue $s(1-s)$. Arguing by meromorphic continuation, we obtain that
\[
\Delta_{0}\!\left(F_{m}(z,s)\right)=s(1-s)F_m(z,s)
\]
for any $s$ for which $F_{m}(z,s)$ does not have a pole. In particular, one may use this to construct harmonic Maass forms by taking $s=1$. Indeed, by \cite[Theorem 6]{Niebur} (note that there is a missing $2\pi \sqrt{n}$), there exists a constant $\mathcal C_n$ such that
\begin{equation}\label{eqn:Fnjn}
2\pi\sqrt{n}F_{-n}(z,1)=j_n(z)+\mathcal C_n.
\end{equation}

\subsection{Real-analytic Eisenstein series}

Throughout the paper, we use various properties of the real-analytic Eisenstein series, defined for $\re(s)>1$ by
\[
E(z,s):=\sum_{\gamma\in\Gamma_{\infty}\backslash\SL_2(\Z)} \im(\gamma z)^s.
\]
Via the Hecke trick, $E(z,s)$ is closely related to the weight two completed Eisenstein series
\[
\widehat{E}_2(z):=1-24\sum_{m\geq 1} \sigma_1(m) q^m -\frac{3}{\pi y},
\]
where $\sigma_\ell(n):=\sum_{d|n}d^\ell$. The following properties of $E(z,s)$ and $\widehat{E}_2(z)$ are well-known.
\begin{lemma}\label{lem:Eprop}
\noindent

\noindent
\begin{enumerate}[leftmargin=*, label={\rm(\arabic*)}]
\item
The function $s\mapsto E(z,s)$ has a meromorphic continuation to $\C$ with a simple pole at $s=1$ of residue $\frac{3}{\pi}$.
\item
The function $z\mapsto E(z,s)$ is an eigenfunction with eigenvalue $s(1-s)$ under $\Delta_0$.
\item
The function $\widehat{E}_2$ is a weight two harmonic Maass form which satisfies
\[
\xi_{2}\!\left(\widehat{E}_2\right) = \frac{3}{\pi}.
\]
\item
Denoting by $\operatorname{CT}_{s=s_0}(f(s))$ the constant term in the Laurent expansion of the analytic continuation of a function $f$ around $s=s_0$,
we have
\[
\widehat{E}_2(z)=\operatorname{CT}_{s=1} \left(\xi_{0}\!\left(E(z,s)\right)\right).
\]
\end{enumerate}
\end{lemma}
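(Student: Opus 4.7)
The plan is to derive every part of the lemma from the standard Fourier expansion
\[
E(z,s) = y^s + \phi(s) y^{1-s} + \frac{4\pi^s \sqrt{y}}{\Gamma(s)\zeta(2s)} \sum_{n\geq 1} n^{s-\frac12}\sigma_{1-2s}(n) K_{s-\frac12}(2\pi n y)\cos(2\pi n x),
\]
where $\phi(s) := \sqrt{\pi}\,\Gamma(s-\tfrac12)\zeta(2s-1)/(\Gamma(s)\zeta(2s))$, obtained by the usual Bruhat unfolding for $\re(s)>1$. For (2), a direct computation gives $\Delta_0(y^s) = s(1-s)y^s$; since $\Delta_0$ is $\SL_2(\Z)$-invariant and acts term by term on the absolutely convergent series, the eigenvalue equation holds for $\re(s) > 1$ and extends by meromorphic continuation. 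For (1), the Bessel series converges uniformly on compact sets in $s \in \C$, so the meromorphic continuation is controlled by $\phi(s)$. The only pole at $s=1$ comes from $\zeta(2s-1)$ (residue $1/2$ at $s=1$), giving $\Res_{s=1}\phi(s) = \sqrt\pi \cdot \Gamma(\tfrac12) \cdot \tfrac12 /(\Gamma(1) \zeta(2)) = (\pi/2)(6/\pi^2) = 3/\pi$. Evaluating $y^{1-s}$ at $s=1$ then gives $\Res_{s=1} E(z,s) = 3/\pi$.

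For (3), weight two modularity of $\widehat{E}_2$ is Hecke's classical result: the Hecke-regularized sum $\lim_{s\to 0^+}\sum_{(c,d)} (cz+d)^{-2}|cz+d|^{-2s}$ is termwise weight two, and its value is $\widehat{E}_2(z)$. For the $\xi_2$-image, the holomorphic part is killed by $\overline{\partial_{\bar z}}$, while using $\partial_{\bar z} y = i/2$ we compute $\partial_{\bar z}(-3/(\pi y)) = 3i/(2\pi y^2)$, giving $\xi_2(-3/(\pi y)) = 2iy^2 \cdot \overline{3i/(2\pi y^2)} = 3/\pi$. Then $\Delta_2(\widehat{E}_2) = -\xi_0(\xi_2(\widehat{E}_2)) = -\xi_0(3/\pi) = 0$, so $\widehat{E}_2$ is harmonic.

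For (4), apply $\xi_0$ termwise to the Fourier expansion. A direct calculation using $\partial_{\bar z}y = i/2$ and the conjugation rule gives $\xi_0(y^s) = \bar{s}\, y^{\bar{s}-1}$ and $\xi_0(\phi(s)y^{1-s}) = \overline{\phi(s)}(1-\bar{s}) y^{-\bar{s}}$; near the real point $s=1$ these simplify, and expanding $\phi(s)(1-s) = -3/\pi + O(s-1)$ shows that $\operatorname{CT}_{s=1}$ of the first two terms reproduces $1 - 3/(\pi y)$. For the $n$-th Whittaker term, specializing $K_{1/2}(2\pi n y) = e^{-2\pi n y}/(2\sqrt{n y})$ at $s=1$, a short computation gives the contribution $(12/\pi)\sigma_{-1}(n) e^{-2\pi n y}\cos(2\pi n x)$; applying $\xi_0$ to $e^{-2\pi n y}\cos(2\pi n x)$ yields $-2\pi n\, q^n$, and the $n$'s cancel to give $-24\sigma_1(n)q^n$, matching the holomorphic non-constant part of $\widehat{E}_2$. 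Summing recovers $\widehat{E}_2$ exactly. The main bookkeeping obstacle is keeping track of the antilinearity of $\xi_0$ (i.e., the $\bar s$'s) when expanding the Laurent series at $s=1$; this is harmless because the relevant functions have real Taylor/Laurent coefficients at $s=1$, but it must be spelled out to justify termwise extraction of the constant coefficient.
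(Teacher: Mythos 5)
Your proposal is correct. The paper itself offers no proof of this lemma --- it is introduced with ``The following properties of $E(z,s)$ and $\widehat{E}_2(z)$ are well-known'' --- so there is no argument in the text to compare against; your derivation from the Fourier expansion $E(z,s)=y^s+\phi(s)y^{1-s}+\frac{4\pi^s\sqrt{y}}{\Gamma(s)\zeta(2s)}\sum_{n\geq 1}n^{s-\frac12}\sigma_{1-2s}(n)K_{s-\frac12}(2\pi ny)\cos(2\pi nx)$ is the standard way to make all four parts explicit, and the computations check out: $\operatorname{Res}_{s=1}\phi(s)=\frac{3}{\pi}$, $\xi_2\bigl(-\frac{3}{\pi y}\bigr)=\frac{3}{\pi}$, and the $n$-th Whittaker term at $s=1$ maps under $\xi_0$ to $-24\sigma_1(n)q^n$. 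Two small points worth spelling out if this were written up. First, for (1) the continuation of the Bessel sum alone does not rule out poles coming from the factor $\frac{1}{\Gamma(s)\zeta(2s)}$ (zeros of $\zeta(2s)$ can produce poles of $E(z,s)$ in $0<\operatorname{Re}(s)<\frac12$); this is harmless since the lemma only asserts the existence of a meromorphic continuation and identifies the pole at $s=1$, but your phrase ``the continuation is controlled by $\phi(s)$'' should not be read as a claim that $s=1$ is the only pole. Second, your closing remark on the antilinearity of $\xi_0$ is exactly the right thing to flag: $\xi_0(E(z,s))$ is a Laurent series in $\overline{s}-1$ rather than $s-1$, and the paper's own convention elsewhere (e.g.\ writing $\xi_0(E(z,\overline{s}))$ in \eqref{eqn:GGdef}) is designed to sidestep this; for the constant term the distinction is immaterial, as you say, since the zeroth coefficient is the same in either variable and equals $\xi_0\bigl(\operatorname{CT}_{s=1}E(z,s)\bigr)$.
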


In light of Lemma \ref{lem:Eprop} (1), it is natural to define, for some $\mathcal{C}\in\C$,
 \begin{equation}\label{defEz}
\mathcal{E}(z):=\lim_{s\to 1} \left(4\pi E(z,s)-\tfrac{12}{s-1}\right)+\mathcal{C}.
\end{equation}
Letting $\zeta(s)$ denote the Riemann zeta function, we specifically choose $\mathcal{C}:=-24\gamma +24\log(2)+144\frac{\zeta'(2)}{\pi^{2}}$ so that, by  \cite[Section II, (2.17) and (2.18)]{GrossZagier},
\begin{equation}\label{eqn:calEgrowth}
\lim\limits_{y \to \infty}\left(\mathcal{E}(z)-4\pi y +12\log(y)\right)=0.
\end{equation}

\section{Construction of the functions}\label{sec:construction}

In this section, we construct two weight two biharmonic functions $\GG_{\z}$ and $\JJ_n$ satisfying
\begin{equation}\label{eqn:GJxiops}
{
\xi_2(\GG_{\z})=\bgg_{\z},
}
 \qquad \xi_2\circ\xi_0\circ\xi_2(\JJ_n)=-j_n,
\end{equation}
where $\bgg_{\z}$ is the weight zero sesquiharmonic Maass form
that is defined in \eqref{eqn:primeformgdef}.

 Similarly, $\JJ_n$ is constructed to have a singularity at $i\infty$ such that $\xi_{2}(\JJ_n)$ is the function $\jj_n$ given in the introduction, i.e., the only singularity of $\JJ_n$ lies in its biharmonic part.

Recall the \begin{it}automorphic Green's function\end{it}
\begin{equation}\label{eqn:Greensdef}
G_s(z,\z):=\sum_{\gamma\in\SL_2(\Z)} g_s\!\left(z,\gamma \z\right).
\end{equation}
Here, with $Q_s$ the Legendre function of the second kind, we define
\[
g_s(z,\z):=-2Q_{s-1}\!\left(1+\tfrac{|z-\z|^2}{2y\y}\right), \quad  \z=\x+i\y\in\H.
\]
The series \eqref{eqn:Greensdef} is convergent for $\re(s)>1$, is an eigenfunction under $\Delta_0$ with eigenvalue $s(1-s)$, and has a meromorphic continuation to the whole $s$-plane (see \cite[Chapter 7, Theorem 3.5]{Hejhal}). For $\gamma\in\SL_2(\R)$, $g_s$ satisfies $g_{s}(\gamma z,\gamma\z)=g_{s}(z,\z)$, yielding that $G_{s}(z,\z)$ is  $\SL_2(\Z)$-invariant in $z$ and $\z$.
\begin{remark}
Using \cite[Proposition 5.1]{GZ2} and the Kronecker limit formula, $G_s(z,\z)$ is related to $\bgg_{\z}$ by
\begin{equation}\label{eqn:Gzdef}
\bgg_{\z}(z)=\frac{1}{2}\lim_{s\to 1} \big(G_{s}(z,\z)+4\pi E(\z,s)\big)-12.
\end{equation}
\end{remark}
In the next lemma, we collect some other useful properties of $G_s(z,\z)$.
\begin{lemma}\label{lem:Greensprop}
\noindent

\noindent
\begin{enumerate}[leftmargin=*, label={\rm(\arabic*)}]
\item The function $s \mapsto G_{s}(z,\z)$ has a simple pole at $s=1$ with residue $-12$.
\item The limit in \eqref{eqn:Gzdef} exists.
\item The function $\bgg_{\z}$ is sesquiharmonic with $\Delta_{0}(\bgg_{\z})=6$.
\item The only singularity of $\bgg_{\z}$ in $\SL_2(\Z)\backslash\mathbb H$ is at $z=\z$ with principal part $4\omega_{\z}\log(r_{\z}(z))$.
\end{enumerate}
\end{lemma}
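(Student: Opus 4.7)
The plan is to establish the four claims in sequence, drawing on the classical analysis of the automorphic Green's function on $\SL_2(\Z)\backslash\H$. For (1), I would invoke the spectral theory of the resolvent kernel (see, e.g., \cite[Chapter 7]{Hejhal}): the function $G_s(z,\z)$ is the kernel for $(\Delta_0 - s(1-s))^{-1}$, so its poles in $s$ correspond to eigenvalues of $\Delta_0$ on $L^2(\SL_2(\Z)\backslash\H)$. The unique eigenfunction for $\lambda=0=1(1-1)$ is the constant, forcing a simple pole at $s=1$ whose residue is proportional to $1/\vol(\SL_2(\Z)\backslash\H)=3/\pi$. Tracking the factor $-2$ built into $g_s$ and the fact that \eqref{eqn:Greensdef} sums over $\SL_2(\Z)$ rather than $\mathrm{PSL}_2(\Z)$ then pins down the residue as $-12$.

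Part (2) is immediate from (1) together with Lemma \ref{lem:Eprop}(1): the residue of $4\pi E(\z,s)$ at $s=1$ equals $+12$, cancelling that of $G_s(z,\z)$, so the limit in \eqref{eqn:Gzdef} exists. For (3), I would exploit that $\xi_0\circ\xi_2\circ\xi_0 = -\xi_0\circ\Delta_0$ and that $\xi_0$ annihilates constants; so it suffices to establish $\Delta_0\bgg_\z = 6$, whence the sesquiharmonicity follows automatically. Since $E(\z,s)$ depends only on $\z$, we have $\Delta_0^{(z)}E(\z,s)=0$, and the eigenfunction relation $\Delta_0 G_s(z,\z)=s(1-s)G_s(z,\z)$ yields $\Delta_0(G_s+4\pi E(\z,s))=s(1-s)G_s$. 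Writing $G_s(z,\z)=-12/(s-1)+H(s,z,\z)$ with $H$ holomorphic at $s=1$ (by (1)), one computes $s(1-s)G_s=12s+s(1-s)H(s,z,\z)\to 12$ as $s\to 1$; dividing by $2$ and noting that $\Delta_0$ kills the constant $-12$ gives $\Delta_0\bgg_\z=6$.

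For (4), I would analyze the local behavior of $g_s$ near $z=\z$. Using the asymptotic $Q_{s-1}(1+u)=-\tfrac{1}{2}\log(u/2)+O(1)$ as $u\to 0^+$, combined with $u=|z-\z|^2/(2y\y)\to 0$ and the identity $\log r_\z(z)=\log|z-\z|-\log|z-\overline{\z}|$, one obtains $g_s(z,\z)=2\log r_\z(z)+O(1)$ as $z\to\z$. In the sum \eqref{eqn:Greensdef}, only the $\gamma\in\SL_2(\Z)$ with $\gamma\z=\z$ contribute a singularity at $z=\z$; these form the stabilizer $\Gamma_\z$ of order $2\omega_\z$, each producing the same $g_s(z,\z)$. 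Summing and using that $E(\z,s)$ is smooth in $z$ isolates the singular contribution to $\bgg_\z$, yielding the stated principal part.

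The main obstacle I anticipate is the precise constant in (1): the value $-12$ depends delicately on the normalization of $g_s$, on summing over $\SL_2(\Z)$ versus $\mathrm{PSL}_2(\Z)$, and on the volume of the fundamental domain, so the cleanest route is to quote the spectral computation from the literature rather than redo it from scratch. Once (1) is in hand, parts (2)--(4) reduce to routine manipulations built around the eigenfunction equation and a local asymptotic for the Legendre function.
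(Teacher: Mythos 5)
The paper states this lemma without proof (it quotes standard facts from Hejhal and Gross--Zagier), so your argument has to stand on its own. Parts (1)--(3) are attacked by the right method and are essentially correct: the pole of the resolvent at $s=1$ coming from the constant eigenfunction with residue proportional to $1/\vol(\SL_2(\Z)\backslash\H)=3/\pi$; the cancellation of residues against $4\pi E(\z,s)$ for (2); and the computation $s(1-s)G_s(z,\z)=12s+s(1-s)H(s,z,\z)\to 12$ for (3), where you should add a word about interchanging $\Delta_0$ with the limit $s\to 1$ (locally uniform convergence of the continuation), but this is routine.

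The genuine gap is in part (4), and it is precisely the normalization issue you flag in (1) but never resolve. Your ingredients are $g_s(z,\z)=2\log r_{\z}(z)+O(1)$ near $z=\z$ and $|\Gamma_{\z}|=2\omega_{\z}$ stabilizer terms in \eqref{eqn:Greensdef}, giving a singular part $4\omega_{\z}\log r_{\z}(z)$ for $G_s(z,\z)$; but \eqref{eqn:Gzdef} carries a factor $\tfrac12$, so your own computation yields the principal part $2\omega_{\z}\log r_{\z}(z)$ for $\bgg_{\z}$, not the stated $4\omega_{\z}\log r_{\z}(z)$ --- you have silently dropped the $\tfrac12$. Moreover the definition \eqref{eqn:primeformgdef} forces the answer independently of any Green's function analysis: $\Delta$ and $y^6$ are smooth and nonvanishing on $\H$, and $j-j(\z)$ vanishes to order exactly $\omega_{\z}$ at $\z$, so the principal part is $\omega_{\z}\log r_{\z}(z)$; this is also what Lemma \ref{lem:ellexps}(2) records via $c^{+-}_{\bgg_{\z},\z}(0)=-\tfrac{\omega_{\z}}{2}$ together with $\beta_{t_0}(1-r^2;1,0)=-2\log r$. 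The three answers ($\omega_{\z}$, your $2\omega_{\z}$, the lemma's $4\omega_{\z}$) differ exactly according to whether \eqref{eqn:Greensdef} is read as a sum over $\SL_2(\Z)$ (which double-counts $\pm\gamma$, and would also make the residue in (1) equal to $-24$, destroying the limit in \eqref{eqn:Gzdef}) or over $\mathrm{PSL}_2(\Z)$. You need to pin this down once --- the only reading consistent with \eqref{eqn:Gzdef}, Lemma \ref{lem:Eprop}(1), and \eqref{eqn:primeformgdef} is the $\mathrm{PSL}_2(\Z)$ one, under which the stabilizer has order $\omega_{\z}$ --- and then carry the factor $\tfrac12$ through; doing so gives $\omega_{\z}\log r_{\z}(z)$, so the constant $4\omega_{\z}$ in the statement cannot be recovered by your route and is in fact inconsistent with the rest of the paper. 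The cleanest proof of (4) bypasses $G_s$ entirely and reads the principal part off \eqref{eqn:primeformgdef}.
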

To construct $\GG_{\z}$ and $\JJ_n$, we find natural preimages
of certain polyharmonic Maass forms, such as $\bgg_{\z}$, under the $\xi$-operator. For this we study the Laurent expansions of eigenfunctions under $\Delta_\kappa$.
\begin{lemma}\label{lem:Laurent}

 Suppose that $z\mapsto f(z,s)$ is an eigenfunction with eigenvalue $(s-\frac{\kappa}{2})(1-s-\frac{\kappa}{2})$ under $\Delta_{\kappa}$ and that $s\mapsto f(z,s)$ is meromorphic. Then for $s$ close to $1-\frac{\kappa}{2}$, we have the Laurent expansion
\begin{equation*}
f(z,s)=\sum_{m\gg -\infty} f_{m}(z)\left(s+\tfrac{\kappa}{2}-1\right)^{m}.
\end{equation*}
The coefficients $f_{m}$ have the following properties:
\begin{enumerate}[leftmargin=*, label={\rm(\arabic*)}]
\item
We have
\begin{equation*}
\Delta_{\kappa}\!\left(f_{m}\right) = (\kappa-1)f_{m-1}-f_{m-2}.
\end{equation*}
\item
If $s\mapsto f(z,s)$ is holomorphic at $s=1-\frac{\kappa}{2}$, then $f_0$ (resp. $f_1$) is annihilated by $\Delta_{\kappa}$ (resp. $\Delta_{\kappa}^2$).
\item
If $ z \mapsto f(z,1-\frac{\kappa}{2})$ vanishes identically, then $f_1$ is annihilated by $\Delta_{\kappa}$.
\item
We have
\begin{equation*}
\xi_{2-\kappa}\left(\operatorname{CT}_{s=1-\frac{\kappa}{2}} \!\left(\frac{\partial}{\partial s} \xi_{\kappa}(f(z,\overline{s}))\right)\right) = (1-\kappa) \operatorname{CT}_{s=1-\frac{\kappa}{2}}\!\left(f(z,s)\right) +\operatorname{Res}_{s=1-\frac{\kappa}{2}}\!\left(f(z,s)\right).
\end{equation*}
\end{enumerate}
\end{lemma}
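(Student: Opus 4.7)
My plan is to set $t := s + \tfrac{\kappa}{2} - 1$ so that $s = 1 - \tfrac{\kappa}{2} + t$, expand the eigenvalue polynomial around $t=0$, and then equate coefficients in the eigenvalue equation. A direct computation gives
\[
\left(s - \tfrac{\kappa}{2}\right)\left(1 - s - \tfrac{\kappa}{2}\right) = (1 - \kappa + t)(-t) = (\kappa-1)t - t^{2},
\]
so the equation $\Delta_{\kappa} f(z,s) = ((\kappa-1)t - t^{2}) f(z,s)$ becomes, after inserting $f(z,s) = \sum_{m \gg -\infty} f_{m}(z) t^{m}$ and comparing coefficients of $t^{m}$,
\[
\Delta_{\kappa}(f_{m}) = (\kappa-1) f_{m-1} - f_{m-2}.
\]
This is (1), and it drives the rest of the lemma.

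For (2), holomorphy at $s = 1 - \tfrac{\kappa}{2}$ forces $f_{-1} = f_{-2} = 0$. Plugging $m=0$ into (1) yields $\Delta_{\kappa}(f_{0}) = 0$, and plugging $m=1$ gives $\Delta_{\kappa}(f_{1}) = (\kappa-1) f_{0}$, so $\Delta_{\kappa}^{2}(f_{1}) = (\kappa-1)\Delta_{\kappa}(f_{0}) = 0$. For (3), the identical vanishing of $z \mapsto f(z, 1-\tfrac{\kappa}{2})$ means that $f$ is holomorphic at $s = 1-\tfrac{\kappa}{2}$ with $f_{0} \equiv 0$ (and $f_{-1} = f_{-2} = 0$), so (1) with $m=1$ directly gives $\Delta_{\kappa}(f_{1}) = 0$.

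The only piece that needs a little care is (4), where one must track how $\xi_{\kappa}$ interacts with the substitution $s \mapsto \overline{s}$. Writing $f(z, \overline{s}) = \sum_{m} f_{m}(z)\, \overline{t}^{m}$ and using that $\xi_{\kappa}$ conjugates scalar factors in $z$, one obtains
\[
\xi_{\kappa}\bigl(f(z,\overline{s})\bigr) = \sum_{m} \xi_{\kappa}(f_{m})(z)\, t^{m}.
\]
Differentiating in $s$ and extracting the $t^{0}$-coefficient yields $\xi_{\kappa}(f_{1})$, so that
\[
\xi_{2-\kappa}\!\left(\operatorname{CT}_{s=1-\frac{\kappa}{2}}\!\left(\tfrac{\partial}{\partial s} \xi_{\kappa}(f(z,\overline{s}))\right)\right) = \xi_{2-\kappa}\bigl(\xi_{\kappa}(f_{1})\bigr) = -\Delta_{\kappa}(f_{1}),
\]
and (1) with $m=1$ identifies $-\Delta_{\kappa}(f_{1}) = -(\kappa-1) f_{0} + f_{-1} = (1-\kappa)\operatorname{CT}_{s=1-\frac{\kappa}{2}}(f) + \operatorname{Res}_{s=1-\frac{\kappa}{2}}(f)$.

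The arguments are essentially bookkeeping once the substitution $t = s + \tfrac{\kappa}{2} - 1$ is made; the only step where one can slip is (4), where one must remember that $\xi_{\kappa}$ acts antilinearly on constants in $z$, so that the $\overline{s}$ in the argument gets converted back into $s$ after applying $\xi_{\kappa}$, making the outer Laurent expansion holomorphic in $s$ and allowing the constant term and residue to appear with the signs shown.
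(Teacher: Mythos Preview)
Your proof is correct and follows essentially the same approach as the paper: expand both sides of the eigenvalue equation in powers of $t=s+\tfrac{\kappa}{2}-1$ to obtain the recursion (1), then specialize to $m=0,1$ for (2)--(3), and for (4) recognize the inner expression as $\xi_{\kappa}(f_{1})$ so that applying $\xi_{2-\kappa}$ yields $-\Delta_{\kappa}(f_{1})$ and the recursion finishes it. Your treatment of (4), making explicit that the antilinearity of $\xi_{\kappa}$ turns the $\overline{t}^{m}$ factors into $t^{m}$, is slightly more detailed than the paper's, which phrases the same point as the identity $\operatorname{CT}_{\overline{s}=1-\frac{\kappa}{2}}\bigl(\tfrac{\partial}{\partial\overline{s}}\xi_{\kappa}(f(z,s))\bigr)=\operatorname{CT}_{s=1-\frac{\kappa}{2}}\bigl(\tfrac{\partial}{\partial s}\xi_{\kappa}(f(z,\overline{s}))\bigr)$ using that $1-\tfrac{\kappa}{2}$ is real.
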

\begin{proof}
(1) The claim follows using the eigenfunction property by comparing coefficients in the Laurent expansions on both sides of Lemma \ref{lem:Laurent} (1).\\
(2) By (1) we have
\begin{equation}\label{eqn:Delf0f1}
\Delta_{\kappa}\!\left(f_{0}\right) = (\kappa-1) f_{-1}-f_{-2},\qquad \Delta_{\kappa}\!\left(f_{1}\right) = (\kappa-1)f_0-f_{-1}.
\end{equation}
Since $s\mapsto f(z,s)$ is holomorphic at $s=1-\frac{\kappa}{2}$, we have that $f_{\ell}=0$ for $\ell<0$, yielding the claim.\\
(3) The claim follows immediately from \eqref{eqn:Delf0f1} and the fact that $f(z,1-\frac{\kappa}{2})=f_0(z)$ if $f(z,s)$ is holomorphic at $s=1-\frac{\kappa}{2}$.\\
(4) Noting that
\begin{equation*}
f_1(z)=\operatorname{CT}_{s=1-\frac{\kappa}{2}} \left(\frac{\partial}{\partial s} f(z,s)\right),\quad
f_0(z)=\operatorname{CT}_{s=1-\frac{\kappa}{2}} \!\left(f(z,s)\right),\quad
f_{-1}(z)=\operatorname{Res}_{s=1-\frac{\kappa}{2}} \!\left(f(z,s)\right),
\end{equation*}
it is not hard to conclude the statement using \eqref{eqn:Delf0f1} and the fact that $1-\frac{\kappa}{2}$ is real so that
\[
\operatorname{CT}_{\overline{s}=1-\frac{\kappa}{2}}\left(\frac{\partial}{\partial\overline{s}}\xi_{\kappa}(f(z,s))\right)=\operatorname{CT}_{s=1-\frac{\kappa}{2}}\left(\frac{\partial}{\partial s}\xi_{\kappa}(f(z,\overline{s}))\right). \qedhere
\]
\end{proof}

 We are now ready to construct the functions $\GG_{\z}$ and $\JJ_n$.
Namely, we set
\begin{align}
\label{eqn:GGdef}
\GG_{\z}(z)&:=\frac{1}{2}
\GG_{\z,1}(z)+ \frac{\pi}{6} \left(\overline{\mathcal{E}(\z)}-\overline{\mathcal{C}}-12\right)\widehat{E}_2(z), \ \text{where} \ \GG_{\z,s}(z):=\frac{\partial}{\partial s}\xi_0\!\left(G_{\overline{s}}(z,\z)\right),\\
\label{eqn:scrJndef}
\JJ_n(z)&:= -\pi\sqrt{n}\mathcal{F}_{-n}(z,1) +\frac{\overline{\mathcal{C}_n}}{12}\mathbb{E}(z) + a_n \widehat{E}_2(z),
\end{align}
where
\begin{align}
\mathbb{E}(z)&:=4\pi \left[\frac{\partial}{\partial s} \xi_0(E(z,\overline{s}))\right]_{s=1}+\frac{\pi}{3}\left(\overline{\mathcal{C}}-12\right) \widehat{E}_{2}(z)\notag,\\
\label{eqn:mathcalFdef}
\mathcal{F}_{-n}(z,s)&:=\frac{\partial^2}{\partial s^2} \xi_0\!\left(F_{-n}(z,\overline{s})\right)-2\frac{\partial}{\partial s} \xi_0\!\left(F_{-n}(z,\overline{s})\right)
\end{align}
with $\mathcal{C}$ and $\mathcal{C}_n$ given in \eqref{defEz} and \eqref{eqn:Fnjn}, respectively, and with $a_n$ determined below in
Subsection \ref{sec:FourierConstruction}.
Also define auxiliary functions $\mathcal{G}_{\z}:=\xi_0(\bgg_{\z})$, $\jj_n:=\xi_2(\JJ_n)$, and $\mathcal{J}_n:=-\xi_0(\jj_n)$ so that \eqref{eqn:GJxiops} is implied by
\begin{align}
\label{eqn:HGxi}\xi_{2}\!\left(\GG_{\z}\right) &= \bgg_{\z},& \xi_{0}\!\left(\bgg_{\z}\right) &= \mathcal{G}_{\z},&\xi_{2}\!\left(\mathcal{G}_{\z}\right)&=-6,\\
\label{eqn:Jxi}\xi_{2}\!\left(\JJ_n\right)&=\jj_{n},&\xi_{0}\!\left(\jj_{n}\right)&=-\calJ_n,&\xi_{2}\!\left(\calJ_n\right)&=j_n.
\end{align}

Note that throughout the paper, we are using uppercase blackboard for depth four, lowercase blackboard for depth three, uppercase script for depth two, and standard letters for depth one. The exception to this rule is $\widehat{E}_2$, whose notation is standard;
the analogous notation for the Eisenstein series may be found by comparing the functions in \eqref{eqn:diagram} vertically.

\begin{lemma}\label{lem:GJxi}
The functions $\GG_{\z}$ and $\JJ_n$ satisfy \eqref{eqn:GJxiops} and we have
\begin{equation}\label{eqn:xiboldE}
\xi_{2}(\mathbb{E})=\mathcal{E}.
\end{equation}
\end{lemma}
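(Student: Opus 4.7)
The plan is to reduce each of the three identities to Lemma \ref{lem:Laurent}(4), applied with $\kappa = 0$ to the $\Delta_0$-eigenfunctions $E(z,s)$, $G_s(z,\z)$, and $F_{-n}(z,s)$ in turn. The Eisenstein correction $\widehat{E}_2$ plays a clean role via $\xi_2(\widehat{E}_2) = 3/\pi$ (Lemma \ref{lem:Eprop}(3)) together with $\xi_0(3/\pi) = 0$, so its multiples either supply the constant required to absorb the residue produced by Lemma \ref{lem:Laurent}(4) or else lie in the kernel of $\xi_0 \circ \xi_2$. Throughout, one uses that $\xi_\kappa$ is conjugate linear in complex scalars, which is why the barred quantities $\overline{\mathcal{E}(\z)}$, $\overline{\mathcal{C}}$, and $\overline{\mathcal{C}_n}$ appear in \eqref{eqn:GGdef}, \eqref{eqn:scrJndef}, and the definition of $\mathbb{E}$.

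First I would dispose of \eqref{eqn:xiboldE}, since the argument for $\JJ_n$ then reuses it. By Lemma \ref{lem:Eprop}(1) the function $E(z,s)$ has a simple pole at $s=1$ with residue $3/\pi$, and by \eqref{defEz} its constant term at $s=1$ equals $(\mathcal{E}(z)-\mathcal{C})/(4\pi)$. Plugging $f(z,s)=E(z,s)$ into Lemma \ref{lem:Laurent}(4) and multiplying through by $4\pi$ yields
\[
\xi_2\!\left(4\pi\left[\tfrac{\partial}{\partial s}\xi_0(E(z,\overline{s}))\right]_{s=1}\right) = \mathcal{E}(z)-\mathcal{C}+12.
\]
Combining with $\xi_2\!\bigl(\tfrac{\pi}{3}(\overline{\mathcal{C}}-12)\widehat{E}_2\bigr)=\mathcal{C}-12$, obtained from conjugate linearity of $\xi_2$ on the scalar and Lemma \ref{lem:Eprop}(3), gives $\xi_2(\mathbb{E})=\mathcal{E}$.

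Next I would prove $\xi_2(\GG_\z)=\bgg_\z$ by the same scheme applied to $f(z,s)=G_s(z,\z)$. Now the residue at $s=1$ is $-12$ by Lemma \ref{lem:Greensprop}(1), and by \eqref{eqn:Gzdef}, together with the computation of the constant term of $4\pi E(\z,s)$ from the previous paragraph, one obtains $\operatorname{CT}_{s=1}G_s(z,\z)=2\bgg_\z(z)+24-\mathcal{E}(\z)+\mathcal{C}$. Lemma \ref{lem:Laurent}(4) thus gives $\xi_2(\GG_{\z,1})=2\bgg_\z(z)+12-\mathcal{E}(\z)+\mathcal{C}$; halving and adding the $\widehat{E}_2$ correction, whose image under $\xi_2$ is $\tfrac{1}{2}(\mathcal{E}(\z)-\mathcal{C}-12)$ by conjugate linearity, cancels every term except $\bgg_\z$.

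For the biharmonic identity $\xi_2\circ\xi_0\circ\xi_2(\JJ_n)=-j_n$, I would unpack the Laurent expansion $F_{-n}(z,s)=\sum_{m\geq 0}f_m(z)(s-1)^m$ at $s=1$; it is holomorphic there by the discussion preceding \eqref{eqn:Fnjn}, Lemma \ref{lem:Laurent}(1) gives the recursion $\Delta_0 f_m = -f_{m-1}-f_{m-2}$, and \eqref{eqn:Fnjn} gives $f_0 = (j_n+\mathcal{C}_n)/(2\pi\sqrt{n})$. The expansion $\xi_0(F_{-n}(z,\overline{s}))=\sum_{m\geq 0}\xi_0(f_m)(z)(s-1)^m$ shows $\mathcal{F}_{-n}(z,1)=2\xi_0(f_2-f_1)$, whence $\xi_2(\mathcal{F}_{-n}(z,1))=-2\Delta_0(f_2-f_1)=2f_1$ by the recursion. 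Combined with $\xi_2(\mathbb{E})=\mathcal{E}$, this identifies
\[
\jj_n=\xi_2(\JJ_n)=-2\pi\sqrt{n}\,f_1+\tfrac{\mathcal{C}_n}{12}\mathcal{E}+\tfrac{3\overline{a_n}}{\pi};
\]
a further application of $\xi_0$ (using $\xi_0(\mathcal{E})=4\pi\widehat{E}_2$, which follows from Lemma \ref{lem:Eprop}(4) and $\xi_0(\mathcal{C})=0$) followed by one more $\xi_2$ produces $-(j_n+\mathcal{C}_n)+\mathcal{C}_n=-j_n$, while the $a_n\widehat{E}_2$ summand is killed because $\xi_2(\widehat{E}_2)$ is constant. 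The main obstacle is purely bookkeeping: tracking conjugates each time $\xi_\kappa$ meets a complex scalar, and verifying that the precise choices of constants in \eqref{eqn:GGdef}, \eqref{eqn:scrJndef}, and the definition of $\mathbb{E}$ exactly cancel the residues produced by Lemma \ref{lem:Laurent}(4).
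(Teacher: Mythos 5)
Your proposal is correct and follows essentially the same route as the paper: Lemma \ref{lem:Laurent} applied to the eigenfunctions $E(z,s)$, $G_s(z,\z)$, and $F_{-n}(z,s)$, with the $\widehat{E}_2$-multiples chosen to absorb the resulting constants via $\xi_2(\widehat{E}_2)=\frac{3}{\pi}$. The only cosmetic difference is that for $\JJ_n$ you work directly with the Laurent coefficients $f_m$ and the recursion of Lemma \ref{lem:Laurent}(1), whereas the paper phrases the same computation as \eqref{eqn:xiJJn}--\eqref{eqn:calFxi}; the content is identical.
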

\begin{proof}
Lemma \ref{lem:Laurent} (4) with $f(z,s)=E(z,s)$ and \eqref{defEz} yields that
\[
4 \pi \xi_{2}\!\left(\left[\frac{\partial}{\partial s} \xi_0(E(z,\overline{s}))\right]_{s=1}\right)=\mathcal{E}(z)-\mathcal{C}+12.
\]
Combining this with Lemma \ref{lem:Eprop} (3) yields \eqref{eqn:xiboldE}.

Plugging $f(z,s)=G_s(z,\z)$ into Lemma \ref{lem:Laurent} (4) and using Lemma \ref{lem:Greensprop} (1), we see that
\begin{align}\notag
\xi_2\!\left(\GG_{\z,1}(z)\right) &=  \operatorname{CT}_{s=1}\!\left(G_{s}(z,\z)\right)+ \operatorname{Res}_{s=1} (G_s(z, \z))=\lim_{s\to 1}\!\left(G_s(z,\z) +\tfrac{12}{s-1}\right) -12\\
&= 2\bgg_{\z}(z)-\mathcal{E}(\z)+\mathcal{C}+12,\label{eqn:ggzrewrite}
\end{align}
by \eqref{defEz} and \eqref{eqn:Gzdef}.

To show the identity for $\JJ_{n}$ in \eqref{eqn:GJxiops}, first note that if $s \mapsto f(z,s)$ is holomorphic at $s=1$ and $z\mapsto f(z,s)$ is real-differentiable, then
\begin{align}\label{xs}
\left[\xi_\kappa\!\left(f(z,\overline{s})\right)\right]_{s=1}=\xi_\kappa\!\left(f(z,1)\right).
\end{align}
Using this and then interchanging the $\xi$-operator with differentiation in $s$ and recalling that $F_{-n}(z,s)$ is an eigenfunction under $\Delta_0$ with eigenvalue $s(1-s)$, we conclude that
\begin{equation}\label{eqn:xiJJn}
\xi_{2}\!\left(\left[\frac{\partial^2}{\partial s^2} \xi_0\!\left(F_{-n}(z,\overline{s})\right)\right]_{s=1}\right) = 2F_{-n}(z,1)+2\left[\frac{\partial}{\partial s}F_{-n}(z,s)\right]_{s=1}.
\end{equation}
Applying Lemma \ref{lem:Laurent} (4) with  $f(z,s)=F_{-n}(z,s) $, we furthermore have
\begin{equation}\label{eqn:xiFn1}
\xi_2\!\left(\left[\frac{\partial}{\partial s} \xi_0\!\left(F_{-n}(z,\overline{s})\right)\right]_{s=1}\right)=F_{-n}(z,1).
\end{equation}
Combining \eqref{eqn:xiJJn} and \eqref{eqn:xiFn1}
with the definition \eqref{eqn:mathcalFdef}
then gives
\begin{equation}\label{eqn:calFxi}
\xi_{2}\!\left(\mathcal{F}_{-n}(z,1)\right)=2\left[\frac{\partial}{\partial s}F_{-n}(z,s)\right]_{s=1}.
\end{equation}
Applying $\xi_0$ to \eqref{eqn:calFxi} and pulling the $\xi$-operator inside, we conclude that
\begin{equation}\label{eqn:DeltaJJn}
\xi_0\circ\xi_2\!\left(\mathcal{F}_{-n}(z,1)\right)=2\left[\frac{\partial}{\partial s}\xi_0\!\left( F_{-n}(z,\overline{s})\right)\right]_{s=1}.
\end{equation}
Applying $\xi_2$ to \eqref{eqn:DeltaJJn} we then obtain from \eqref{eqn:xiFn1} and  \eqref{eqn:Fnjn} that
\begin{equation*}
\xi_{2}\circ \xi_0\circ\xi_2\!\left(\mathcal{F}_{-n}(z,1)\right)=2F_{-n}(z,1)= \left(\pi\sqrt{n}\right)^{-1}\!\left(j_n(z)+ \mathcal{C}_n\right).
\end{equation*}
The claim then follows, using \eqref{eqn:xiboldE}, Lemma \ref{lem:Eprop} (4), and Lemma \ref{lem:Eprop} (3).
\end{proof}
\begin{remark}
Combining \eqref{eqn:HGxi} and \eqref{eqn:Jxi} together with \eqref{eqn:xiboldE}, Lemma \ref{lem:Eprop} (3), and Lemma \ref{lem:Eprop} (4) yields the following:
\begin{equation}\label{eqn:diagram}\begin{split}
\xymatrix{
\GG_{\z}\ar[r]^{\xi_{2}}&\bgg_{\z}\ar[r]^{\xi_{0}}&\mathcal{G}_{\z}\ar[r]^{\xi_{2}}&-6 ,\\
\JJ_n\ar[r]^{\xi_{2}}&\jj_{n}\ar[r]^{\xi_{0}}&-\calJ_{n}\ar[r]^{\xi_{2}}&-j_n
,\\
\mathbb{E}\ar[r]^{\xi_{2}}&\mathcal{E}\ar[r]^{\xi_{0}}&4\pi \widehat{E}_2\ar[r]^{\xi_{2}}&12.
}
\end{split}
\end{equation}
\end{remark}
\rm

In order to determine the principal parts of the polyharmonic Maass forms defined in \eqref{eqn:GGdef} and \eqref{eqn:scrJndef}, we require the Taylor expansions of $\varphi_{m,s}$ and $\xi_0(g_s(z,\z))$ around $s=1$; note that the principal parts of the analytic continuations to $s=1$ come from the values at $s=1$ of the corresponding seeds of the Poincar\'e series. We compute the first two coefficients of the Taylor expansion of the seeds in the following lemma.
\begin{lemma}\label{lem:diffseeds}
Assume that $n\in\N$.
\noindent

\noindent
\begin{enumerate}[leftmargin=*, label={\rm(\arabic*)}]
\item
We have
\begin{align*}
\varphi_{-n,s}(z)=f_{-n,0}(z) + f_{-n,1}(z)(s-1)+O\!\left((s-1)^2\right),
\end{align*}
where
\begin{align*}
f_{-n,0}(z)&:=\frac{1}{2\pi\sqrt{n}}\left(q^{-n}-W_0(-2\pi ny)q^{-n}\right),\\
f_{-n,1}(z)&:=-\frac{1}{2\pi\sqrt{n}}\left(2\bm{\W}_0(-2\pi ny) q^{-n}+E_1(4\pi ny)q^{-n} \right).
\end{align*}
\item
We have
\begin{equation}\label{eqn:xigsLaurent}
\xi_0\!\left(g_s(z,\z)\right)=\mathfrak{g}_{\z,0}(z) + \mathfrak{g}_{\z,1}(z)(s-1)+O\!\left((s-1)^2\right),
\end{equation}
where
\begin{align*}
\mathfrak{g}_{\z,0}(z)&:=\xi_0\!\left(g_{1}(z,\z)\right),& &\\ \mathfrak{g}_{\z,1}(z)&:=\left(\frac{z-\overline{\z}}{2\sqrt{\y}}\right)^{-2}B(r_{\z}(z)) X_{\z}^{-1}(z),
&\textnormal{with }B(r)&:= \tfrac{4r^2}{\left(1-r^2\right)^2} \left[\frac{\partial}{\partial s} \frac{\partial}{\partial w}Q_{s-1}(w)\right]_{\substack{\hspace{-12pt}s=1,\\w=\tfrac{1+r^2}{1-r^2}}}.
\end{align*}
Moreover, we have
\begin{equation}\label{eqn:rlim}
\lim_{r\to 0^+} B(r)=0.
\end{equation}
\end{enumerate}
\end{lemma}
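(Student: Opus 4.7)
The plan is to handle the two parts by separate methods. For Part (1), I would first evaluate $\varphi_{-n,s}$ at $s=1$ using the closed form $I_{1/2}(w) = \sqrt{2/(\pi w)}\sinh(w)$, giving $\varphi_{-n,1}(z) = \frac{1}{2\pi\sqrt n}q^{-n}(1 - e^{-4\pi n y})$; Lemma \ref{lem:relationsofnonhol} together with the elementary identity $E_0(w) = e^{-w}/w$ yields $W_0(-2\pi n y) = e^{-4\pi n y}$, producing $f_{-n,0}$.

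For $f_{-n,1}$, the eigenvalue relation $\Delta_0\varphi_{-n,s} = s(1-s)\varphi_{-n,s}$ combined with $s(1-s) = -(s-1)-(s-1)^2$ and Lemma \ref{lem:Laurent}(1) forces $\Delta_0 f_{-n,1} = -f_{-n,0}$. Two applications of Lemma \ref{lem:xiFourier} compute $\Delta_0(\bm{\W}_0(-2\pi n y)q^{-n}) = \tfrac{1}{2}q^{-n}$, while a direct calculation using $E_1'(w) = -e^{-w}/w$ gives $\Delta_0(E_1(4\pi n y)q^{-n}) = -W_0(-2\pi n y)q^{-n}$; hence the proposed $f_{-n,1}$ satisfies $\Delta_0 f_{-n,1} = -f_{-n,0}$ up to a harmonic piece spanned by $q^{-n}$ and $W_0(-2\pi n y)q^{-n}$. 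To eliminate both harmonic terms, I would match asymptotics as $y \to \infty$: the Hankel expansion $y^{1/2}I_{s-1/2}(2\pi n y) \sim \tfrac{e^{2\pi n y}}{2\pi\sqrt n}\bigl(1 - \tfrac{s(s-1)}{4\pi n y} + O(y^{-2})\bigr)$ differentiated in $s$ at $s=1$ matches the $y^{-1}$ tail of $-\tfrac{1}{\pi\sqrt n}\bm{\W}_0(-2\pi n y)q^{-n}$ via Lemma \ref{lem:Wgrowth}, leaving no room for an additional $q^{-n}$ contribution; the $W_0(-2\pi n y)q^{-n}$ mode is ruled out by comparing $y \to 0^+$ expansions using $I_\nu(w) \sim (w/2)^\nu/\Gamma(\nu+1)$.

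For Part (2), since $Q_0(w) = \tfrac12\log\tfrac{w+1}{w-1}$, substituting $w = 1+|z-\z|^2/(2y\y)$ and applying the identity $|z-\bar\z|^2 - |z-\z|^2 = 4y\y$ yields $(w-1)/(w+1) = r_\z(z)^2$, whence $g_1(z,\z) = 2\log r_\z(z)$ and $\mathfrak{g}_{\z,0} = \xi_0(g_1(z,\z))$ by definition. For $\mathfrak{g}_{\z,1}$, differentiating $g_s(z,\z) = -2Q_{s-1}(w)$ in $s$ and applying $\xi_0 = 2i\overline{\partial_{\bar z}}$ via the chain rule brings out $[\partial_w\partial_s Q_{s-1}]_{s=1}(w)$; the key geometric ingredients are $dw/d(r^2) = 2/(1-r^2)^2$ and, since $X_\z$ is holomorphic, $\partial_{\bar z}r_\z^2 = X_\z \partial_{\bar z}\overline{X_\z} = -2i\y X_\z/(\bar z - \z)^2$, after which $r_\z^2 X_\z^{-1} = \overline{X_\z}$ assembles the pieces into the stated form. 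For the limit $\lim_{r\to 0^+}B(r) = 0$, I would use the decomposition $Q_\nu(w) = P_\nu(w)\cdot\tfrac12\log\tfrac{w+1}{w-1} - R_\nu(w)$ with $R_\nu$ analytic at $w=1$: since $P_\nu(1) = 1$ for all $\nu$, the quantity $[\partial_\nu P_\nu(w)]_{\nu=0}$ vanishes to order $w-1$ near $w=1$ (checked via $P_\nu(w) = \pi^{-1}\int_0^\pi(w+\sqrt{w^2-1}\cos\theta)^\nu d\theta$), so $[\partial_w\partial_s Q_{s-1}]_{s=1}(w)$ is at worst $O(\log(w-1))$ as $w \to 1$, which is dominated by the $r^2$ prefactor as $r \to 0$.

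The main obstacle is the asymptotic-matching step in Part (1). While the ODE together with Lemma \ref{lem:xiFourier} identifies $f_{-n,1}$ modulo harmonic modes in essentially closed form, showing that both harmonic coefficients vanish requires tracking subleading orders in the Hankel expansion of $I_{s-1/2}$ and of its $s$-derivative at $s=1$, together with the small-$y$ power series; in contrast, Part (2) is essentially a careful but routine chain-rule exercise.
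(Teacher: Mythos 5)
Your computation of $f_{-n,0}$ coincides with the paper's. For $f_{-n,1}$, however, the paper does not use the ODE-plus-matching strategy at all: it simply quotes the closed form \cite[10.38.6]{NIST},
\[
\left[\frac{\partial}{\partial s}I_{s-\frac12}(w)\right]_{s=1}=-(2\pi w)^{-\frac12}\left(E_1(2w)e^{w}+\Ei(2w)e^{-w}\right),
\]
and then rewrites $\Ei(4\pi ny)e^{-4\pi ny}q^{-n}=2\bm{\W}_0(-2\pi ny)q^{-n}$ via Lemma \ref{lem:relationsofnonhol} and an integration by parts. Your indirect route is sound in outline (the identities $\Delta_0(\bm{\W}_0(-2\pi ny)q^{-n})=\tfrac12 q^{-n}$ and $\Delta_0(E_1(4\pi ny)q^{-n})=-W_0(-2\pi ny)q^{-n}$ do check out, and the large-$y$ Hankel matching does kill the $q^{-n}$ mode), but the step you yourself flag as the obstacle is a genuine gap: eliminating the decaying mode $W_0(-2\pi ny)q^{-n}=\overline{q}^{\,n}$ by small-$y$ matching requires showing that the \emph{constant} terms in the $y\to0^+$ expansions of $2\bm{\W}_0(-2\pi ny)$ and $E_1(4\pi ny)$ cancel (the logarithms cancel automatically, but the constants depend on the lower limit $\sgn(w)\infty$ in the definition of $\bm{\W}_0$ and on $\gamma$ and $\log(4\pi n)$). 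Verifying this cancellation is essentially equivalent to proving the identity $\Ei(w)e^{-w}=2\bm{\W}_0(-\tfrac w2)$ that the paper's direct route needs anyway, so your detour does not avoid the hard computation --- it only postpones it, and as written the proof is incomplete at exactly that point.

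Part (2) is correct and your argument for $\lim_{r\to0^+}B(r)=0$ is genuinely different from the paper's. The paper inserts the integral representation $Q_{s-1}(w)=\int_0^\infty(w+\sqrt{w^2-1}\cosh(u))^{-s}\,du$, obtains the explicit double-limit expression \eqref{Qdiff}, and evaluates it by hand; you instead use the Frobenius structure of the Legendre equation at $w=1$ (both indicial roots zero), writing $Q_\nu(w)=\tfrac12P_\nu(w)\log\tfrac{w+1}{w-1}-R_\nu(w)$ with $R_\nu$ analytic at $w=1$, and exploit $P_\nu(1)=1$ (indeed $[\partial_\nu P_\nu(w)]_{\nu=0}=\log\tfrac{w+1}{2}$) to get $[\partial_s\partial_wQ_{s-1}(w)]_{s=1}=O(\log(w-1))$, which the prefactor $r^2$ absorbs. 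This is cleaner and avoids the delicate asymptotic evaluation of \eqref{Qdiff}; the only thing to add is a sentence justifying that $R_\nu$ and $\partial_\nu R_\nu|_{\nu=0}$ are analytic in $w$ near $w=1$ (standard from the hypergeometric series for $Q_\nu$, whose coefficients are rational in $\nu$). The chain-rule derivation of $\mathfrak{g}_{\z,1}$ matches the paper's \eqref{eqn:frakg}--\eqref{eqn:coshr}.
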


\begin{proof}
(1) Since $s \mapsto \varphi_{-n,s}(z)$ is holomorphic at $s=1$, we have a Taylor expansion of the shape \eqref{eqn:mathcalFdef}; we next explicitly determine the Taylor coefficients. We obtain $f_{-n,0}(z)= (2\pi\sqrt{n})^{-1}(q^{-n}-\overline{q}^{n})$. Evaluating $I_{\frac12}(w) = \frac{1}{\sqrt{2\pi w}}(e^w-e^{-w})$, the claim for $f_{-n,0}$ then follows by noting that $\Gamma(1,w)=e^{-w}$ for $w>0$ and using Lemma \ref{lem:relationsofnonhol} to evaluate $\overline{q}^{n}=W_0(-2\pi ny)q^{-n}$.

To determine $f_{-n,1}$, we observe that by definition
\[
f_{-n,1}(z)=e^{-2\pi i nx} y^{\frac{1}{2}} \left[\frac{\partial}{\partial s}I_{s-\frac12}(2\pi n y)\right]_{s=1}.
\]
Using \cite[10.38.6]{NIST}, we obtain that
\begin{equation*}
\left[\frac{\partial}{\partial s}I_{s-\frac12}(w)\right]_{s=1} = -(2\pi w)^{-\frac{1}{2}} \left(E_1(2w) e^w + \Ei(2w)e^{-w}\right).
\end{equation*}
Hence, plugging in $w=2\pi ny$, we obtain
\[
f_{-n,1}(z)=-\left(2\pi\sqrt{n}\right)^{-1} e^{-2\pi i nx}\left(E_1(4\pi ny) e^{2\pi ny} + \Ei(4\pi ny)e^{-2\pi ny}\right).
\]
Using Lemma \ref{lem:relationsofnonhol}, one sees that for $w>0$
\begin{align*}
\Ei(w) &= W_2\left(\tfrac{w}{2}\right)+w^{-1}e^w.
\end{align*}
Applying integration by parts to the definition of $\bm{\W}_0$, this implies
\[
\Ei(4\pi n y)e^{-4\pi ny}q^{-n}=2\bm{\W}_0(-2\pi n y)q^{-n},
\]
from which we conclude the claim.

\noindent (2) Define
\begin{equation}\label{eqn:frakgdef}
\mathfrak{g}_s(z,\z):=\frac{\partial}{\partial s}\xi_0\!\left(g_{s}(z,\z)\right).
\end{equation}
By Lemma \ref{lem:Greensprop} (1), the $(s-1)^{-1}$ term in the Laurent
expansion of $g_s$ is
constant as a function of $z$, and hence
annihilated by $\xi_0$. Thus
\eqref{eqn:xigsLaurent} is equivalent to showing that $\mathfrak{g}_1(z,\z)=\mathfrak{g}_{\z,1}(z)$. The chain rule yields
\begin{equation}\label{eqn:frakg}
\mathfrak{g}_1(z,\z)=-2\left[\frac{\partial}{\partial s} \frac{\partial}{\partial w}Q_{s-1}(w)\right]_{s=1,\, w=1+\frac{|z-\z|^2}{2y\y}} \xi_{0}\left(1+\frac{|z-\z|^2}{2y\y}\right).
\end{equation}
A direct computation gives
\begin{align}\label{eqn:xicoshr}
\xi_0\!\left(1+\tfrac{|z-\z|^2}{2y\y}\right)=-2\left(\tfrac{z-\overline{\z}}{2\sqrt{\y}}\right)^{-2} \frac{r_{\z}^2(z)}{\left(1-r_{\z}^2(z)\right)^2} X_{\z}^{-1}(z).
\end{align}
Since
\begin{equation}\label{eqn:coshr}
1+\tfrac{|z-\z|^2}{2y\y}=\cosh(d(z,\z)) = \tfrac{2}{1-r_{\z}^2(z)}-1 = \tfrac{1+r_{\z}^2(z)}{1-r_{\z}^2(z)},
\end{equation}
we conclude that $\mathfrak{g}_1(z,\z)=\mathfrak{g}_{\z,1}(z)$, establishing \eqref{eqn:xigsLaurent}.

To evaluate the limit \eqref{eqn:rlim}, we use (see \cite[Section II, (2.5)]{GrossZagier})
\[
Q_{s-1}(w)=\int_{0}^{\infty}\left(w+\sqrt{w^2-1}\cosh(u)\right)^{-s} du.
\]
It is then not hard to compute
\begin{multline}\label{Qdiff}
B(r)=-4 r\int_0^\infty \frac{1+\log\left(1-r^2\right)-\log\left(1+r^2+2r\cosh(u)\right)}{\left(1+re^u\right)^2\left(1+re^{-u}\right)^2} \left(r+\frac{\left(1+r^2\right)}{2}\cosh(u)\right)du.
\end{multline}
We next determine the limit of this expression as $r\to 0^+$. By evaluating
\[
\int_{0}^{\infty} \frac{r+\frac{1+r^2}{2}\cosh(u)}{\left(1+re^u\right)^2\left(1+re^{-u}\right)^2} du = \frac{1+4r^2\log(r) - r^4}{4\left(1-r^2\right)^3}+O(1),
\]
one can show that the limit of \eqref{Qdiff} as $r\to 0^+$ equals
\begin{equation*}
\lim_{r\to 0^+} \left( r \int_{0}^\infty \frac{\log\left(1+r e^u\right)}{\left(1+re^u\right)^2} e^u du\right)-1.
\end{equation*}
The claim then follows by  determining that the limit equals $1$.
\end{proof}

\section{Fourier expansions}\label{sec:FourierElliptic}

\indent In this section, we investigate the shape of the Fourier expansions of biharmonic Maass forms.

\subsection{Fourier expansions of sesquiharmonic Maass forms}

The following shapes of Fourier expansions for sesquiharmonic Maass forms follow by Lemmas \ref{lem:xiFourier} and \ref{lem:Wgrowth}.
\begin{lemma}\label{lem:sesquiFourier}
If $\mathcal{M}$ is translation-invariant, sesquiharmonic of weight $\kappa\in\Z\setminus\{1\}$, and grows at most linear exponentially at $i\infty$, then for $y\gg0$ we have $\mathcal{M}=\mathcal{M}^{++}+\mathcal{M}^{+-}+\mathcal{M}^{--}$, where
\begin{align*}
\mathcal{M}^{++}(z)&:=\sum_{m\gg -\infty} c_{\mathcal{M}}^{++}(m) q^m,\\
\mathcal{M}^{+-}(z)&:=c_{\mathcal{M}}^{+-}(0) y^{1-\kappa}+\sum_{\substack{m\ll \infty\\ m\neq 0}}c_{\mathcal{M}}^{+-}(m)
W_\kappa(2\pi my)q^m,\\
\mathcal{M}^{--}(z)&:=c_{\mathcal{M}}^{--}(0)\log(y)+\sum_{\substack{m\gg -\infty\\ m\neq 0}} c_{\mathcal{M}}^{--}(m)\bm{\W}_{\kappa}(2\pi my) q^m.
\end{align*}
Moreover, $\mathcal{M}$ is harmonic if and only if $\mathcal{M}^{--}(z)=0$.
\end{lemma}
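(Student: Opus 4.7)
The plan is to use translation invariance to decompose the problem into a Fourier-mode-by-Fourier-mode ODE analysis, identify the solution basis via Lemma \ref{lem:xiFourier}, and then use the growth condition together with Lemma \ref{lem:Wgrowth} to restrict the summation ranges.

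First I would write $\mathcal{M}(z) = \sum_{m \in \Z} c_m(y) e^{2\pi i m x}$ by translation invariance. The sesquiharmonic condition on $\mathcal{M}$ is equivalent to $\xi_\kappa(\Delta_\kappa(\mathcal{M})) = 0$ (using $\Delta_\kappa = -\xi_{2-\kappa} \circ \xi_\kappa$), and a translation-invariant smooth function in the kernel of $\xi_\kappa$ is holomorphic in $z$. Hence $\Delta_\kappa(\mathcal{M})(z) = \sum_m b_m q^m$ for some constants $b_m$, and equating Fourier modes reduces the problem to solving a second-order linear inhomogeneous ODE in $y$ for each $c_m(y) e^{2\pi i m x}$.

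For $m \neq 0$, Lemma \ref{lem:xiFourier} supplies the solution basis: the functions $q^m$ and $W_\kappa(2\pi my) q^m$ are both annihilated by $\Delta_\kappa$ and thus span the homogeneous solution space, while applying the two formulas of Lemma \ref{lem:xiFourier} in succession yields
\[
\Delta_\kappa\!\left(\bm{\W}_\kappa(2\pi my) q^m\right) = 2^{\kappa-1} q^m,
\]
so $\bm{\W}_\kappa(2\pi my) q^m$ (up to a constant depending on $\kappa$) is a particular solution. Consequently,
\[
c_m(y) e^{2\pi i m x} = c_\mathcal{M}^{++}(m)\, q^m + c_\mathcal{M}^{+-}(m)\, W_\kappa(2\pi my) q^m + c_\mathcal{M}^{--}(m)\, \bm{\W}_\kappa(2\pi my) q^m.
\]
For $m=0$, the ODE collapses to $-y^2 c_0'' - \kappa y c_0' = b_0$. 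Since $\kappa \neq 1$, the homogeneous solutions are spanned by $1$ and $y^{1-\kappa}$, and a direct calculation gives $\Delta_\kappa(\log(y)) = 1-\kappa \neq 0$, so $\frac{b_0}{1-\kappa}\log(y)$ supplies a particular solution. This produces the $m=0$ contributions to the three pieces.

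Finally, to pin down the index ranges I would apply Lemma \ref{lem:Wgrowth} and compare with the linear-exponential growth bound. Using $|q^m| = e^{-2\pi m y}$, the asymptotics $W_\kappa(2\pi my) q^m \sim (-4\pi my)^{-\kappa} \overline{q}^m$ for $m>0$ (and rapid decay for $m<0$) and $\bm{\W}_\kappa(2\pi my) q^m = O(y^{-1} q^m)$ force the ranges $m \gg -\infty$ in $\mathcal{M}^{++}$ and $\mathcal{M}^{--}$ and $m \ll \infty$ in $\mathcal{M}^{+-}$. The harmonicity claim is then immediate: $\mathcal{M}$ is harmonic precisely when $\Delta_\kappa(\mathcal{M}) = 0$, i.e.\ when all $b_m$ vanish, which removes exactly the particular-solution terms (the $\log(y)$ and $\bm{\W}_\kappa$ contributions) and thus corresponds to $\mathcal{M}^{--} = 0$. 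The only step requiring genuine care is the verification $\Delta_\kappa(\bm{\W}_\kappa(2\pi my) q^m) = 2^{\kappa-1} q^m$ via the two iterated applications of Lemma \ref{lem:xiFourier}, together with the sign and exponent bookkeeping when translating the Lemma \ref{lem:Wgrowth} asymptotics into the correct index bounds; beyond that the argument is essentially an ODE exercise.
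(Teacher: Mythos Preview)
Your argument is correct and follows exactly the route the paper indicates (the paper simply states that the lemma ``follows by Lemmas \ref{lem:xiFourier} and \ref{lem:Wgrowth}'' without writing out the details you have supplied). One small slip: the asymptotic you quote should read $W_\kappa(2\pi my)q^m \sim (-4\pi my)^{-\kappa}\,\overline{q}^{\,-m}$ (growth for $m>0$, decay for $m<0$), not $\overline{q}^{\,m}$; your stated index restrictions are nevertheless the correct ones.
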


\subsection{Fourier expansions of biharmonic Maass forms}
A direct calculation gives the following shape of the constant term of the biharmonic part of the Fourier expansion.
\begin{lemma}\label{lem:Fourierconstant}
The constant term of the Fourier expansion of a weight $\kappa\in\Z\setminus\{1\}$ biharmonic Maass form $F$ has the shape
\begin{equation}\label{eqn:Fourierconstant}
c_{F}^{+++}(0)+ c_{F}^{++-}(0) y^{1-\kappa} + c_{F}^{+--}(0) \log(y)+ c_{F}^{---}(0)y^{1-\kappa} \left(1+(\kappa-1)\log(y)\right).
\end{equation}
Moreover, we have
\[
\xi_{\kappa}\!\left(y^{1-\kappa} (1+(\kappa-1)\log(y))\right)=-(\kappa-1)^2\log(y).
\]
\end{lemma}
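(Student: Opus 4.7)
The approach is to reduce the first claim to an ODE problem in $y$. A weight-$\kappa$ biharmonic Maass form is in particular $T$-invariant, so it admits a Fourier expansion $F(z)=\sum_{n\in\Z}F_n(y)e^{2\pi i nx}$. Extracting the constant term via $F_0(y)=\int_0^1 F(x+iy)\,dx$ and differentiating under the integral sign gives $\Delta_\kappa^2 F_0=0$. On functions of $y$ alone, $\Delta_\kappa$ reduces to the Euler-type operator $L:=-y^2\partial_y^2-\kappa y\partial_y$, so the task is to determine $\ker L^2$ among functions of $y$.

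The ansatz $y^a$ gives $L(y^a)=-a(a+\kappa-1)y^a$, so, using $\kappa\neq 1$, $\ker L$ is two-dimensional, spanned by $1$ and $y^{1-\kappa}$. Since $L$ has order two, $\ker L^2$ has dimension at most four. Two direct product-rule computations produce the missing basis: $L(\log y)=1-\kappa$ and $L(y^{1-\kappa}\log y)=(\kappa-1)y^{1-\kappa}$. Linearly combining, I obtain
\[
L\bigl(y^{1-\kappa}(1+(\kappa-1)\log y)\bigr)=(\kappa-1)^2 y^{1-\kappa}\in\ker L,
\]
so this function lies in $\ker L^2$. The four functions $1,\ y^{1-\kappa},\ \log y,\ y^{1-\kappa}(1+(\kappa-1)\log y)$ are linearly independent (distinguished by their asymptotics at $y\to\infty$ and the presence or absence of logarithms), and hence span $\ker L^2$. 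This gives the shape \eqref{eqn:Fourierconstant} after labeling the four coefficients $c_F^{+++}(0),\ c_F^{++-}(0),\ c_F^{+--}(0),\ c_F^{---}(0)$.

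For the second identity, I use that for any real-valued $f(y)$ depending only on $y$ one has $\partial_{\bar z}f(y)=\tfrac{i}{2}f'(y)$, hence $\xi_\kappa f=y^\kappa f'(y)$. Writing $f(y)=y^{1-\kappa}+(\kappa-1)y^{1-\kappa}\log y$ and applying the product rule, the two $y^{-\kappa}$ terms cancel and I obtain
\[
f'(y)=(1-\kappa)y^{-\kappa}+(\kappa-1)\bigl[(1-\kappa)y^{-\kappa}\log y+y^{-\kappa}\bigr]=-(\kappa-1)^2 y^{-\kappa}\log y,
\]
so $\xi_\kappa f=-(\kappa-1)^2\log y$, as claimed. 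The lemma is explicitly flagged as a direct calculation, so there is no substantive obstacle; the only real care-point is the excluded case $\kappa=1$, where the Euler exponents $0$ and $1-\kappa$ coincide and $\ker L$ would instead be spanned by $1$ and $\log y$, changing the form of the expansion.
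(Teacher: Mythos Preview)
Your proposal is correct and is precisely the direct calculation the paper alludes to but omits; reducing $\Delta_\kappa$ on functions of $y$ to the Euler operator $L=-y^2\partial_y^2-\kappa y\partial_y$, finding the four-dimensional kernel of $L^2$, and verifying the $\xi_\kappa$-identity by the product rule is exactly the intended computation. There is nothing to compare, since the paper gives no proof beyond the phrase ``a direct calculation.''
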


\subsection{Fourier expansions of the functions from Section \ref{sec:construction}}\label{sec:FourierConstruction}

We now determine the shapes of the Fourier expansions of the functions from Section \ref{sec:construction}. For this, we complete the definition \eqref{eqn:scrJndef} by fixing $a_n$. Specifically, since Lemma \ref{lem:Eprop} (3) implies that $\xi_{2}(a_n \widehat{E}_2)=\frac{3}{\pi}\overline{a_n}$ is a constant, we may choose $a_n$ so that the constant term of the holomorphic part of the Fourier expansion of $\jj_n$ vanishes for $n\neq 0$ and the constant term is $1$ for $n=0$. For  $n=0$ we must verify that the holomorphic part of the constant term is indeed equal to $1$ in the explicit formula $\jj_0(z)=\frac{1}{6}\log(y^6|\Delta(z)|)+1$. For this, we use the product expansion of $\Delta$ to show that as $y\to\infty$
\begin{equation}\label{eqn:logDeltaFourier}
\frac{1}{6}\log\left(y^6|\Delta(z)|\right)+1=
1- \frac{\pi}{3}y+\log(y)+o(1).
\end{equation}

\begin{lemma}\label{lem:Fourierexps}
\noindent

\noindent
\begin{enumerate}[leftmargin=*, label={\rm(\arabic*)}]
\item

 There exist $c_{j_n}(m), c_{\calJ_n}^{+}(m), c_{\calJ_n}^{-}(m), c_{\jj_n}^{++} (m), c_{\jj_n}^{+-}(m)$, and  $c_{\jj_n}^{--}(m)\in\C$ such that
\begin{align*}
j_n(z)&=q^{-n}+\sum_{m\geq 1} c_{j_n}(m) q^m, \\
\calJ_{n}(z)&=\sum_{m\geq 0} c_{\calJ_{n}}^+(m)q^m +4\pi n
\delta_{n\neq 0}
  W_2(2\pi ny)q^n
-\delta_{n=0} \frac{1}{y}+ \sum_{m\leq-1} c_{\calJ_{n}}^-(m)W_2(2\pi my)q^m,\\
\jj_n(z)&=\delta_{n=0}+\sum_{m\geq 1} c_{\jj_n}^{++}(m)q^m +  c_{\jj_n}^{+-}(0) y+ \sum_{m\leq -1} c_{\jj_n}^{+-}(m)W_{0}(2\pi my) q^m
\notag\\
 &\hspace{2.15cm}
 +\delta_{n=0}\log(y) +2\delta_{n\neq 0}\bm{\W}_{0}(-2\pi ny) q^{-n} +\sum_{m \geq 1} c_{\jj_n}^{--}(m)\bm{\W}_{0}(2\pi my)
 q^m.
\end{align*}
Here $\delta_S:=1$ if some statement $S$ is true and $0$ otherwise.
\item
There exist constants $c_{\bgg_{\z}}^{++}(m)$, $c_{\bgg_{\z}}^{+-}(m)$, $c_{\GG_{\z}}^{+++}(m)$, $c_{\GG_{\z}}^{++-}(m)$, and $c_{\GG_{\z}}^{+--}(m)\in\C$ such that for $y$ sufficiently large
\begin{align*}
\mathcal{G}_{\z}(z)&= - 4 \pi \sum_{m \geq 1 } m\overline{c^{+-}_{\bgg_{\z}}(-m)} q^m + \frac{6}{y}, \\
\bgg_{\z}(z) &= \sum_{m\geq 1} c_{\bgg_{\z}}^{++}(m) q^m +\sum_{m \leq -1} c_{\bgg_{\z}}^{+-}(m)W_0(2\pi my)q^m+6\log(y),\\
\GG_{\z}(z)&=\sum_{m\geq 0} c_{\GG_{\z}}^{+++}(m)q^m + \sum_{m\leq  -1} c_{\GG_{\z}}^{++-}(m)
W_{2}(2\pi my) q^m \notag\\
& \hspace{4.0cm}-\frac{6}{y}\left(1+\log(y)\right)+\sum_{m \geq 1}c_{\GG_{\z}}^{+--}(m)\bm{\W}_{2}(2\pi my)q^m.
\end{align*}
\item
There exist constants $c_{\mathbb{E}}^{+++} (m)$, $c_{\mathbb{E}}^{++-}(m)$, $c_{\mathbb{E}}^{+--}(m)$, $c_{\mathcal{E}}^{++}(m)$, and $c_{\mathcal{E}}^{+-}(m)\in\C$ such that
\begin{align*}
\xi_0(\mathcal E(z))&=4\pi \widehat{E}_2(z) = 4\pi \sum_{m\geq 0} c_{E_2} (m) q^m -\frac{12}{y},\\
\mathcal E(z)& = \sum_{m\geq 1 } c_{\mathcal E}^{++}(m) q^m+ 4\pi y +\sum_{m\leq -1}c_{\mathcal E}^{+-}(m) W_{0}(2\pi my) q^m- 12\log(y),\\
\nonumber \mathbb E(z) &= \sum_{m\geq 0} c_{\mathbb E}^{+++} (m) q^m + \sum_{m\leq -1} c_{\mathbb E}^{++-}(m) W_2(2\pi my)q^m \\
&\hspace{3.2cm}+ 4\pi  \log(y) +\sum_{m\geq 1} c_{\mathbb E}^{+--}(m)\bm{\W}_{2}(2\pi my)q^m+ \frac{12}{y}(1+\log(y)).
\end{align*}

\end{enumerate}
\end{lemma}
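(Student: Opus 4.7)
My plan is to traverse the $\xi$-operator diagram \eqref{eqn:diagram} from its holomorphic bottom upward, using Lemmas \ref{lem:sesquiFourier} and \ref{lem:Fourierconstant} to obtain the generic shape of each Fourier expansion and then Lemma \ref{lem:xiFourier} together with the explicit $\xi$-relations \eqref{eqn:HGxi}--\eqref{eqn:Jxi} to pin down the non-holomorphic coefficients. The logic is that each function in the diagram has a prescribed polyharmonic depth, so its Fourier expansion is fixed up to finitely many unknowns per Fourier index, which are then determined by matching the known shadow through $\xi$.

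For part (1), the expansion of $j_n$ is standard from $j_1=q^{-1}+O(q)$ and the formula for $T_n$. Since $\xi_2(\calJ_n)=j_n$ is holomorphic, $\calJ_n$ is a weight-$2$ harmonic Maass form, so Lemma \ref{lem:sesquiFourier} (with the $\mathcal{M}^{--}$ summand absent) prescribes its shape. Lemma \ref{lem:xiFourier} gives $\xi_2(W_2(2\pi my)q^m)=(4\pi m)^{-1}q^{-m}$, and matching $\xi_2(\calJ_n)=j_n$ forces the coefficient $4\pi n$ in front of $W_2(2\pi ny)q^n$ for $n\neq 0$, while for $n=0$ the $-1/y$ term is forced by $\xi_2(-1/y)=1=j_0$. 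The function $\jj_n$ is weight-zero sesquiharmonic with $\xi_0(\jj_n)=-\calJ_n$, so Lemma \ref{lem:sesquiFourier} again prescribes the shape and Lemma \ref{lem:xiFourier} (via $\xi_0(W_0(2\pi my)q^m)=4\pi m q^{-m}$ and $\xi_0(\bm{\W}_0(2\pi my)q^m)=2\pi m W_2(-2\pi my)q^{-m}$) determines the remaining coefficients: the $2$ in front of $\bm{\W}_0(-2\pi ny)q^{-n}$ is forced by the coefficient $4\pi n$ of $W_2(2\pi ny)q^n$ in $\calJ_n$, and the $\log(y)$ for $n=0$ matches the $-1/y$ in $\calJ_0$ via $\xi_0(\log y)=1/y$. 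The holomorphic constant term of $\jj_n$ equals $\delta_{n=0}$: for $n\neq 0$ this is by the defining choice of $a_n$ in \eqref{eqn:scrJndef}, and for $n=0$ it is read off from the asymptotic \eqref{eqn:logDeltaFourier}.

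Part (2) is analogous, working through the leftmost column of \eqref{eqn:diagram}. The expansion of $\bgg_\z$ follows from \eqref{eqn:Gzdef} combined with the Kronecker limit formula and the Fourier expansion of the automorphic Green's function at the cusp; the coefficient $6$ of $\log(y)$ and the vanishing of any linear-in-$y$ term are consistent with the asymptotic $\Delta(z)(j(z)-j(\z))\to 1$ built into \eqref{eqn:primeformgdef}. Applying $\xi_0$ term-by-term via Lemma \ref{lem:xiFourier} and noting the antilinearity $\xi_0(cf)=\overline{c}\,\xi_0(f)$ (which produces the conjugation in the formula for $\mathcal{G}_\z$) yields the stated expansion of $\mathcal{G}_\z$. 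Finally, $\GG_\z$ is weight-$2$ biharmonic with $\xi_2(\GG_\z)=\bgg_\z$: Lemma \ref{lem:Fourierconstant} provides the shape of the constant term, and matching against the shadow $6\log(y)$ via $\xi_2(y^{-1}(1+\log y))=-\log(y)$ forces the $-\tfrac{6}{y}(1+\log(y))$ piece; the remaining non-holomorphic coefficients for $m\neq 0$ are pinned down by matching Lemma \ref{lem:xiFourier} on the $W_0$ and $\bm{\W}_0$ terms of $\bgg_\z$.

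For part (3), $\widehat{E}_2$ is classical, the expansion of $\mathcal{E}$ follows from the classical Fourier expansion of $E(z,s)$ combined with the Laurent expansion at $s=1$ and the normalization \eqref{eqn:calEgrowth}, and the biharmonic $\mathbb{E}$ is handled in parallel with $\GG_\z$ through $\xi_2(\mathbb{E})=\mathcal{E}$; the coefficient $+12$ of $y^{-1}(1+\log y)$ is then forced by the $-12\log y$ in $\mathcal{E}$. The main obstacle is the biharmonic constant-term bookkeeping: disentangling the four possible non-holomorphic pieces in \eqref{eqn:Fourierconstant} and confirming that only the $\log(y)$ and $y^{-1}(1+\log y)$ pieces actually occur (and with the specific coefficients claimed) requires a careful cross-check against the construction of $\GG_\z$, $\mathbb{E}$, and $\JJ_n$ from Section \ref{sec:construction}, in particular against the Taylor expansions of the seed functions from Lemma \ref{lem:diffseeds}.
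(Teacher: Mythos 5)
There is a genuine gap, and it is structural: your strategy of climbing the $\xi$-tower from the bottom can only ever determine the \emph{non-meromorphic} parts of each expansion, because the meromorphic part of every function in the tower lies in the kernel of the relevant $\xi$-operator. Concretely, matching $\xi_2(\calJ_n)=j_n$ pins down the $W_2$-terms and the $-\delta_{n=0}/y$ term of $\calJ_n$, but says nothing about its holomorphic part, so it cannot show that this part is $\sum_{m\geq 0}c^+_{\calJ_n}(m)q^m$ rather than containing negative powers of $q$; likewise $\xi_0(\jj_n)=-\calJ_n$ cannot show that the holomorphic part of $\jj_n$ has no principal part (and the same objection applies to $\GG_\z$, $\bgg_\z$, $\mathcal{E}$, and $\mathbb{E}$). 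This is not bookkeeping: the assertion that $\jj_n$ has a \emph{purely sesquiharmonic} principal part $2\bm{\W}_0(-2\pi ny)q^{-n}$ with no accompanying meromorphic $q^{-n}$ term is the main content of part (1), and a general polar sesquiharmonic form of linear-exponential growth certainly need not satisfy it. The only access to these kernel parts is through the explicit constructions: the paper proves \eqref{eqn:growthjjn} by differentiating Niebur's Fourier expansion of $F_{-n}(z,s)$ in $s$ and invoking the seed Taylor coefficients of Lemma \ref{lem:diffseeds} (1) (where the would-be meromorphic principal part cancels, leaving only $2\bm{\W}_0(-2\pi ny)q^{-n}$ plus a decaying $E_1$-term), proves boundedness of $\bgg_\z-6\log(y)$ and of the sesquiharmonic remainder of $\GG_\z$ via the Gross--Zagier asymptotic \eqref{eqn:Gs*lim}, and uses \eqref{eqn:calEgrowth} for $\mathcal{E}$; only then do Lemmas \ref{lem:sesquiFourier} and \ref{lem:Wgrowth} exclude growing terms. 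Your closing paragraph gestures at Lemma \ref{lem:diffseeds}, but only as a cross-check on the biharmonic constant term, not as the essential input for the meromorphic parts.

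A secondary, related point: the paper runs the tower in the opposite direction where it matters, first establishing the expansion of $\jj_n$ (resp.\ $\bgg_\z$) from the construction and \emph{then} obtaining $\calJ_n$ (resp.\ $\mathcal{G}_\z$) by applying $\xi_0$ and Lemma \ref{lem:xiFourier}; applying $\xi$ downward is well defined and loses no information, whereas inverting it upward is underdetermined exactly by the holomorphic ambiguity above. Your $\xi$-computations themselves (the factor $4\pi n$ in $\calJ_n$, the factor $2$ in front of $\bm{\W}_0(-2\pi ny)q^{-n}$, the $\log(y)\leftrightarrow 1/y$ and $y^{-1}(1+\log y)\leftrightarrow\log y$ correspondences) are correct and consistent with the paper, so the proposal would become a proof once you supply, for each of $\jj_n$, $\bgg_\z$, $\GG_\z$, $\mathcal{E}$, and $\mathbb{E}$, a growth estimate at $i\infty$ derived from \eqref{eqn:scrJndef}, \eqref{eqn:GGdef}, \eqref{eqn:Gs*lim}, and \eqref{eqn:calEgrowth} that forces the meromorphic parts to be holomorphic at the cusp.
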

\begin{proof}
\noindent
(1) Since the expansion for $j_n$ is well-known, it is enough to show the expansion for $\jj_n$. The expansion for $\calJ_n$ then follows by applying $\xi_0$, employing \eqref{eqn:Jxi} and Lemma \ref{lem:xiFourier}. We now use \eqref{eqn:calFxi}, then apply $\frac{\partial}{\partial s}$ to the Fourier expansion of $F_{-n}(z,s)$ given in \cite[Theorem 1]{Niebur}, and employ Lemma \ref{lem:diffseeds} (1) to determine the contribution to the principal part from the first term in \eqref{eqn:scrJndef}. Combining this with \eqref{defEz} and \eqref{eqn:xiboldE}, we see that the principal part of $\jj_n$ is the growing part of
\begin{equation}\label{eqn:growthjjn}
c_{\jj_n}^{+-}(0) y+\delta_{n=0}\log(y)+\delta_{n\neq 0}\left(2\bm{\W}_0(-2\pi ny)q^{-n}+E_1(4\pi ny)q^{-n}\right).
\end{equation}
However, by \eqref{eqn:GammaEr} and the asymptotic growth of the incomplete gamma function \cite[8.11.2]{NIST}, $E_1(4\pi ny)q^{-n}$ decays exponentially as $y\to\infty$ and thus does not contribute to the principal part. The constant term of the holomorphic part of $\jj_n$ is determined by the choice of $a_n$. In the special case $n=0$, the evaluation $c_{\jj_0}^{+-}(0)=-\frac{\pi}{3}$
implies that \eqref{eqn:growthjjn} matches \eqref{eqn:logDeltaFourier}.

\noindent
(2) We first claim that $\bgg_{\z}(z)-6\log(y)$ vanishes as $y\to\infty$. By \cite[Section II, (2.19)]{GrossZagier} we have
\begin{equation}\label{eqn:Gs*lim}
G_{s}(z,\z) = \frac{4\pi}{1-2s}y^{1-s} E(\z,s)+O_s\!\left(e^{-y}\right) \quad (\text{as}\ y \to \infty),
\end{equation}
where the error has no pole at $s=1$.
Combining \eqref{eqn:Gs*lim} with \eqref{eqn:ggzrewrite} and \eqref{defEz} implies that
\begin{equation}\label{eqn:aG1++(0)}
\bgg_{\z}(z)=\lim_{s\to 1} \left( 2\pi \left(\frac{y^{1-s}}{1-2s}+1\right)E(\z,s)\right)-12
+O\!\left(e^{-y}\right)\qquad\textnormal{as }y\to\infty.
\end{equation}
From \cite[p. 241, second displayed formula]{GrossZagier}, we have
\[
2\pi E(\z,s)=\frac{6}{s-1}+ O(1), \qquad 1+\frac{y^{1-s}}{1-2s}=\left(\log(y)+2\right)(s-1)+O_y\left((s-1)^2\right),
\]
and hence the right-hand side of \eqref{eqn:aG1++(0)} equals $6\log(y)+O(e^{-y})$. We conclude from \eqref{eqn:aG1++(0)} that $\bgg_{\z}(z)-6\log(y)$ vanishes as $y\to\infty$.

Since the sesquiharmonic part of $\bgg_{\z}$ is $6\log(y)$ by Lemma \ref{lem:Greensprop} (3), the expansion for $\bgg_{\z}$ follows from the expansion in Lemma \ref{lem:sesquiFourier} by comparing the asymptotics in Lemma \ref{lem:Wgrowth} with \eqref{eqn:aG1++(0)}. The expansion for $\mathcal{G}_{\z}$ then follows by applying $\xi_0$ and using Lemma \ref{lem:xiFourier}.

Finally, by explicitly computing a pre-image under $\xi_2$ of the sesquiharmonic part of  $\bgg_{\z}$, we conclude that the biharmonic part of the expansion of $\GG_{\z}(z)$ is $-6(1+\log(y))y^{-1}$. Subtracting this from $\GG_{\z}$ yields a sesquiharmonic function which is bounded as $y\to \infty$ by \eqref{eqn:Gs*lim} and \eqref{eqn:GGdef}. We may thus use Lemma \ref{lem:sesquiFourier} and note the asymptotics in Lemma \ref{lem:Wgrowth} to compute the shape of the rest of the expansion.

\noindent

\noindent

\noindent
(3) The fact that $\xi_0(\mathcal E)=4\pi\widehat{E}_2$ follows from Lemma \ref{lem:Eprop} (4). Using Lemma \ref{lem:Wgrowth}, we obtain the claim for $\mathcal{E}$ directly from \eqref{eqn:calEgrowth}. Noting the relationship \eqref{eqn:xiboldE} between $\mathbb{E}$ and $\mathcal{E}$ together with $\xi_2(y^{-1})=-1$ and $\xi_2(\log(y))=y$, the constant term of $\mathbb{E}$ is then obtained by Lemma \ref{lem:Fourierconstant}. After subtracting this term, the remaining function is sesquiharmonic and we obtain the Fourier expansion of $\mathbb{E}$ by Lemma \ref{lem:sesquiFourier} and Lemma \ref{lem:Wgrowth}.
\end{proof}

\section{Elliptic expansions}
\subsection{Elliptic expansions of polyharmonic Maass forms}
For a weight $\kappa$ non-holomorphic modular form $F$, the \begin{it}elliptic expansion\end{it} around the point $\z\in\H$ is the unique expansion of the type
\[
F(z)=\left(\frac{z-\overline{\z}}{2\sqrt{\y}}\right)^{-\kappa}\sum_{m\in\Z} c_{F,\z}\!\left(r_{\z}(z);m\right) X_{\z}^m(z), \qquad r_\z(z) \ll 1,
\]
where  $r_{\z}$ and $X_{\z}$ are defined in \eqref{eqn:rXdef}.
\rm
If $F$ is a polar polyharmonic Maass form of depth $\ell$, then $c_{F,\z}(r;m)$ satisfies a differential equation with $\ell$ independent solutions
for each $m\in\Z$.
We choose a basis of solutions $B_{\kappa,j}(r;m)$ for $1\leq j\leq \ell$ such that $(z-\overline{\z})^{-\kappa}B_{\kappa,j}(r_{\z}(z);m)X_{\z}^{m}(z)$ has depth $j$, i.e., $j\in\N_0$ is minimal with
\begin{equation*}
\xi_{\kappa,z}^{j}\!\left(
\left(\frac{z-\overline{\z}}{2\sqrt{\y}}\right)^{-\kappa}
B_{\kappa,j}\!\left(r_{\z}(z);m\right)X_{\z}^{m}(z)\right)=0.
\end{equation*}
We then write, iterating ``$+$'' $\ell-j$ times and ``$-$'' $j-1$ times,
\[
c_{F,\z}\!\left(r_{\z}(z);m\right) = \sum_{j=1}^{\ell}c_{F,\z}^{+\ldots+-\ldots-}(m) B_{\kappa,j}\!\left(r_{\z}(z);m\right).
\]

A direct calculation gives the following lemma.
\begin{lemma}\label{lem:xiellexp}
Let
 $g_m:(0,1) \to \R$ be a differentiable function and define
\[
f_m(z):=\left(\frac{z-\overline{\z}}{2\sqrt{\y}}\right)^{-\kappa} g_m(r_{\z}(z)) X_{\z}^m(z).
\]
Then the following hold.
\noindent

\noindent
\begin{enumerate}[leftmargin=*, label={\rm(\arabic*)}]
\item
For $\kappa\in \Z$, we
 have
\[
\xi_{\kappa}\!\left(f_m(z)\right)=  -\frac{1}{2}\left(\frac{z-\overline{\z}}{2\sqrt{\y}}\right)^{\kappa-2} \left(1-r_{\z}^2(z)\right)^{\kappa}r_{\z}^{2m+1}(z)  g_{m}'\!\left(r_{\z}(z)\right)X_{\z}^{-m-1} (z),
\]
\item
The function $f_m$ has depth $j$ if and only if there exist $a_{\ell}\, (1 \leq \ell \leq j-1)$ with $a_{j-1}\neq 0$ for which
\[
g_m'(r) =\left(1-r^2\right)^{-\kappa}r^{-2m-1} \sum_{\ell=1}^{j-1} a_{\ell} B_{2-\kappa,\ell}(r;-m-1).
\]
\end{enumerate}
\end{lemma}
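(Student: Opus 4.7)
\emph{Proof plan.} For part (1), the plan is a direct calculation starting from $\xi_\kappa(f_m)=2iy^\kappa\,\overline{\partial f_m/\partial\bar z}$. Both $X_{\z}(z)$ and $z-\overline{\z}$ are holomorphic in $z$, so only the factor $g_m(r_{\z}(z))$ contributes to $\partial/\partial\bar z$. Writing $r_{\z}^2=X_{\z}(z)\overline{X_{\z}(z)}$ and computing
\[
\frac{\partial\overline{X_{\z}(z)}}{\partial\bar z}=\overline{X_{\z}'(z)}=\frac{-2i\y}{(\bar z-\z)^2},
\]
the chain rule yields an explicit formula for $\partial r_{\z}/\partial\bar z$, hence for $\partial f_m/\partial\bar z$. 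After taking complex conjugates, the identity $\overline{X_{\z}}^{\,m}=r_{\z}^{2m}X_{\z}^{-m}$ produces the index $-m-1$, and multiplication by $2iy^\kappa$ gives a preliminary expression for $\xi_\kappa(f_m)$ involving the prefactor $\bigl(\tfrac{\bar z-\z}{2\sqrt{\y}}\bigr)^{-\kappa}/(z-\overline{\z})^2$.

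The final simplification rests on the identity $|z-\overline{\z}|^2-|z-\z|^2=4y\y$ (equivalent to \eqref{eqn:coshr}), from which
\[
\left(\frac{\bar z-\z}{2\sqrt{\y}}\right)^{-\kappa}=\left(\frac{z-\overline{\z}}{2\sqrt{\y}}\right)^{\kappa}\left(\frac{1-r_{\z}^2(z)}{y}\right)^{\kappa}.
\]
Substituting this and rewriting $\tfrac{1}{(z-\overline{\z})^2}=\tfrac{1}{4\y}\bigl(\tfrac{z-\overline{\z}}{2\sqrt{\y}}\bigr)^{-2}$, all remaining factors of $y$ and $\y$ cancel against the $2iy^\kappa$ and against the $\y$ appearing in $\partial r_{\z}/\partial\bar z$, and the factors of $i$ combine to the overall sign $-\tfrac{1}{2}$; this produces the formula in part (1).

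For part (2), I observe that by part (1) the function $\xi_\kappa(f_m)$ is exactly a single term at index $-m-1$ in the elliptic expansion of a weight-$(2-\kappa)$ function, namely
\[
\xi_\kappa(f_m)(z)=\left(\frac{z-\overline{\z}}{2\sqrt{\y}}\right)^{-(2-\kappa)}h\bigl(r_{\z}(z)\bigr)X_{\z}^{-m-1}(z),
\]
where $h(r):=-\tfrac{1}{2}(1-r^2)^\kappa r^{2m+1}g_m'(r)$. By the minimality definition of depth, $f_m$ has depth $j$ in weight $\kappa$ if and only if $\xi_\kappa(f_m)$ has depth $j-1$ in weight $2-\kappa$, which, by the defining property of the basis $\{B_{2-\kappa,\ell}(r;-m-1)\}_{\ell\ge1}$, is equivalent to $h(r)=\sum_{\ell=1}^{j-1}c_\ell\,B_{2-\kappa,\ell}(r;-m-1)$ with $c_{j-1}\neq 0$. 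Solving for $g_m'(r)$ and absorbing the factor $-2$ into new constants $a_\ell:=-2c_\ell$ yields the claim. The only substantive obstacle is the bookkeeping in part (1); once the prefactor identity above is in hand, the remainder is mechanical, and part (2) is a direct translation of part (1) into the language of the depth filtration on single-index terms in the elliptic expansion.
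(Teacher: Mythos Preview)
Your proposal is correct and follows the same approach as the paper, which simply states that the lemma follows by ``a direct calculation'' without giving details; your computation of $\partial r_{\z}/\partial\bar z$ via the chain rule, the conjugation identity $\overline{X_{\z}}^{\,m}=r_{\z}^{2m}X_{\z}^{-m}$, and the prefactor simplification using $|z-\overline{\z}|^2(1-r_{\z}^2)=4y\y$ are exactly the steps that make this calculation go through, and your deduction of part (2) from part (1) via the depth filtration is the intended reading.
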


By Lemma \ref{lem:xiellexp} (2), one may choose $B_{\kappa,j}(r;m)$ such that, for some $
C_m
\neq 0$,
\begin{equation}\label{eqn:Bxirel}
B_{\kappa,j}'(r;m)= C_m \left(1-r^2\right)^{-\kappa}r^{-2m-1} B_{2-\kappa,j-1}(r;-m-1).
\end{equation}
This uniquely determines $r\mapsto B_{\kappa,j}(r;m)$ up to an additive constant. If $B_{\kappa,j}(r;m)$ satisfies \eqref{eqn:Bxirel} and $f$ has depth $j$, then we say that $f$ has \begin{it}pure depth\end{it} $j$ if there exist constants $c_{\z}(m)\in\C$ such that
\[
f(z)=(z-\overline{\z})^{-\kappa}\sum_{m\in \Z} c_{\z}(m) B_{\kappa,j}(r;m) X_{\z}^m(z).
\]

Since any function $f$ of depth $\ell$ naturally (and uniquely) splits into $f=\sum_{j=1}^{\ell} f_j$ with $f_j$ of pure depth $j$, we call $f_j$ the \begin{it}depth $j$ part of the elliptic expansion of $f$.
\end{it}

\subsection{General elliptic expansions of sesquiharmonic Maass forms}
We next explicitly choose a basis of functions $B_{\kappa,j}(r;m)$ in the special case that $F$ is sesquiharmonic. For this we define
\begin{align}\label{eqn:B23def}
B_{\kappa,2}(r;m)&:=\beta_{t_0}(1-r^2;1-\kappa,-m),& B_{\kappa,3}(r;m)&:=\bm{\beta}_{\kappa-1,-m}(r),\qquad 0<t_0,r<1
\end{align}
with
\begin{align*}
\beta_{t_0}(r;a,b)&:=-\int_{r}^{1-t_0} t^{a-1}(1-t)^{b-1}dt-\sum_{\substack{n\geq 0\\ n\neq -b}} \frac{(-1)^n}{n+b}\binom{a-1}{n} t_0^{n+b}-(-1)^b  \delta_{a\in\N}\delta_{0\leq -b<a}\log\!\left(t_0\right),\\
\bm{\beta}_{a,b}(r)&:=-2\int_0^{r}t^{2b-1}\left(1-t^2\right)^{-a-1}\beta_{t_0}\!\left(1-t^2; a, 1-b\right)dt.
\end{align*}
It turns out that $\beta_{t_0}$ is independent of the choice of $t_0$ (see Lemma \ref{lem:betabnd} below). We however leave the dependence in the notation to distinguish it from the incomplete beta function.
\begin{definition}\label{def:finorder}
	We say that a function $\mathcal{M}$ has \begin{it}finite order\end{it} at $\z$ if there exists $m_0\in\R$ such that $r_{\z}^{m_0}(z)\mathcal{M}(z)$ does not have a singularity at $\z$.
\end{definition}

\begin{lemma}\label{lem:sesquiellexp}
If $\mathcal{M}$ is sesquiharmonic of weight $\kappa$ and has singularities of finite order, then it
 has an expansion of the type $\mathcal{M}=\mathcal{M}_{\z}^{++} + \mathcal{M}_{\z}^{+-}+\mathcal{M}_{\z}^{--}$ for $r_{\z}(z)$ sufficiently small with
	\begin{align*}
	\mathcal{M}_{\z}^{++}(z)&:=\left(\frac{z-\overline{\z}}{2\sqrt{\y}}\right)^{-\kappa}\sum_{m\gg -\infty} c_{\mathcal{M},\z}^{++}(m)X_{\z}^m(z),\\
	\mathcal{M}_{\z}^{+-}(z)&:=\left(\frac{z-\overline{\z}}{2\sqrt{\y}}\right)^{-\kappa}\sum_{m\ll \infty} c_{\mathcal{M},\z}^{+-}(m)\beta_{t_0}\!\left(1-r_{\z}^2(z);1-\kappa,-m\!\right)X_{\z}^m(z),\\
	\mathcal{M}_{\z}^{--}(z)&:=\left(\frac{z-\overline{\z}}{2\sqrt{\y}}\right)^{-\kappa}\sum_{m\gg -\infty} c_{\mathcal{M},\z}^{--}(m)\bm{\beta}_{\kappa-1,-m}\!\left(r_{\z}(z)\right)X_{\z}^m(z)
	\end{align*}
for some constants $c_{\mathcal{M},\z}^{++}(m),c_{\mathcal{M},\z}^{+-}(m)$, and $c_{\mathcal{M},\z}^{--}(m)\in\C$.
Moreover, the special functions in this expansion, i.e, those given in \eqref{eqn:B23def}, satisfy \eqref{eqn:Bxirel}
with $C_m=-2$.
\end{lemma}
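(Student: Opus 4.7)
The plan is to combine the general elliptic Laurent expansion with the third-order ODE satisfied by sesquiharmonic functions. Since $\mathcal{M}$ has at most finite-order singularity at $\z$ in the sense of Definition \ref{def:finorder}, it admits a formal expansion
\[
\mathcal{M}(z) = \left(\frac{z-\overline{\z}}{2\sqrt{\y}}\right)^{-\kappa} \sum_{m \in \Z} c_{\mathcal{M},\z}(r_\z(z); m)\, X_\z^m(z),
\]
valid for $r_\z(z)$ sufficiently small. The sesquiharmonic hypothesis $\xi_\kappa^3 \mathcal{M} = 0$, combined with Lemma \ref{lem:xiellexp}(1), translates for each fixed $m$ into a third-order linear ODE in $r$ for $c_{\mathcal{M},\z}(r;m)$. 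The solution space is three-dimensional, and the decomposition $\mathcal{M}_\z^{++} + \mathcal{M}_\z^{+-} + \mathcal{M}_\z^{--}$ corresponds to choosing one basis element of each pure depth.

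First I identify the depth-two and depth-three basis solutions. The depth-one space is spanned by the function constant in $r$, giving the pure $X_\z^m$ contributions forming $\mathcal{M}_\z^{++}$. By Lemma \ref{lem:xiellexp}(2) and the convention $B_{2-\kappa,1}\equiv 1$, a pure depth-two solution must satisfy $B_{\kappa,2}'(r;m) = C_m(1-r^2)^{-\kappa} r^{-2m-1}$. Differentiating $B_{\kappa,2}(r;m) = \beta_{t_0}(1-r^2; 1-\kappa, -m)$ via the chain rule and using $\frac{\partial}{\partial r}\beta_{t_0}(r;a,b)=r^{a-1}(1-r)^{b-1}$ (the explicit series and logarithm in the definition of $\beta_{t_0}$ depend only on $t_0$, $a$, $b$) yields exactly this identity with $C_m=-2$. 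Iterating the scheme, \eqref{eqn:Bxirel} for $j=3$ demands
\[
B_{\kappa,3}'(r;m)=-2(1-r^2)^{-\kappa} r^{-2m-1}\beta_{t_0}(1-r^2;\kappa-1,m+1),
\]
and integrating from $0$ to $r$ reproduces exactly $\bm{\beta}_{\kappa-1,-m}(r)$. In both cases the value $C_m=-2$ is independent of $m$, as claimed.

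A subsidiary check is that the three series are independent of the auxiliary parameter $t_0$. This reduces to computing $\partial_{t_0}\beta_{t_0}(r;a,b)$ and matching the derivative of the integral endpoint at $t=1-t_0$ against the derivative of the explicit sum and log pieces via the Taylor expansion of $(1-t)^{b-1}$ at $t=1-t_0$, with the logarithmic correction absorbing the pole that occurs exactly when $-b \in \{0,\dots,a-1\}$.

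The main remaining step, which I anticipate to be the most delicate, is to verify that under the finite-order hypothesis the three series actually converge on a sufficiently small punctured neighborhood of $\z$ and have the stated one-sided index ranges. A direct integration of the formulas for $B_{\kappa,2}'$ and $B_{\kappa,3}'$ yields the small-$r$ asymptotics of each basis element (with logarithmic corrections at the exceptional exponents); the growth rate of the product $B_{\kappa,j}(r;m) X_\z^m$ in $r_\z^{-1}$ then depends in a controlled way on the sign of $m$. The finite-order condition $r_\z^{m_0}\mathcal{M}$ bounded at $\z$ forces the coefficients $c_{\mathcal{M},\z}^{++}(m)$ and $c_{\mathcal{M},\z}^{--}(m)$ to vanish for $m$ sufficiently negative and $c_{\mathcal{M},\z}^{+-}(m)$ to vanish for $m$ sufficiently positive, yielding the index ranges in the statement.
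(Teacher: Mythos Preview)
Your approach is correct and essentially mirrors the paper's: both verify that $B_{\kappa,j}(r;m)$ for $j=1,2,3$ have pure depth $j$ by computing the derivatives $g_{m,j}'(r)$ and applying Lemma~\ref{lem:xiellexp}, yielding \eqref{eqn:Bxirel} with $C_m=-2$. The paper's proof is terser---it stops after the depth computations---while you add the $t_0$-independence check (which the paper isolates as Lemma~\ref{lem:betabnd}) and the index-range argument from the finite-order hypothesis (which the paper defers to Lemma~\ref{lem:sesquiellpp}); these are legitimate inclusions but not a different route.
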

\begin{proof}
We use Lemma \ref{lem:xiellexp} with $g_m=g_{m,j}$, where for $j\in\{1,2,3\}$ we define
\begin{align*}
g_{m,1}(r):=1,\qquad g_{m,2}(r):=\beta_{t_0}\left(1-r^2;1-\kappa,-m\right),\qquad g_{m,3}(r):=\bm{\beta}_{\kappa-1,-m}(r).
	\end{align*}
	We show that $\xi_{\kappa}^3$ annihilates the corresponding functions $f_m=f_{m,j}$ and $f_{m,j}$ has depth $j$. Clearly $f_1$ has depth one as it is meromorphic. Computing
\begin{equation}\label{eqn:B2diff}
g_{m,2}'(r)=-2r\left[\frac{\partial}{\partial w} \beta_{t_0}(w;1-\kappa,-m)\right]_{w=1-r^2} = -2 \left(1-r^2\right)^{-\kappa} r^{-2m-1},
\end{equation}
we see from Lemma \ref{lem:xiellexp} (1) that
\begin{equation}\label{eqn:xiB2}
\xi_{\kappa}\!\left(f_{m,2}(z)\right)=\left(\frac{z-\overline{\z}}{2\sqrt{\y}}\right)^{
\kappa-2}X_{\z}^{-m-1}(z)
\end{equation}
is meromorphic, and hence $f_{m,2}$ has depth two.

Finally we compute
\begin{equation}\label{eqn:B3diff}
\bm{\beta}_{\kappa-1,-m}'(r)= -2 r^{-2m-1} \left(1-r^2\right)^{-\kappa}\beta_{t_0}\left(1-r^2;\kappa-1,m+1\right),
\end{equation}
from which we conclude via Lemma \ref{lem:xiellexp} (1) that
\begin{equation}\label{eqn:xiB3}
\xi_{\kappa}\!\left(f_{m,3}(z)\right)=
\left(\frac{z-\overline{\z}}{2\sqrt{\y}}\right)^{\kappa-2}
\beta_{t_0}\left(1-r_{\z}^2(z);\kappa-1,m+1\right)X_{\z}^{-m-1}(z).
\end{equation}
Thus $f_{m,3}$ has depth three from the above calculation.
\end{proof}

The following lemma follows directly by using the binomial series.
\begin{lemma}\label{lem:betabnd}
	For $0<w<1$ and $0<t_0<1$
	we have
\[
	\beta_{t_0}(w;a,b)=- \sum_{n\in\N_0\setminus\{-b\}} \frac{(-1)^n}{n+b}\binom{a-1}{n}(1-w)^{n+b}-(-1)^b \delta_{a\in\N}\delta_{0\leq -b<a}\log(1-w).
\]
	In particular, $\beta_{t_0}$ is independent of $t_0$ and as $w\to 1^-$ we have
\[
\beta_{t_0}(w;a,b)\sim \begin{cases}
-\frac{1}{b}(1-w)^{b} & \text{if }b\neq 0,\\
(a-1) \delta_{a\notin\N}(1-w) -\log(1-w)\delta_{a\in\N}&\text{if }b=0.\end{cases}
\]
\end{lemma}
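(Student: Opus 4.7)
The plan is to establish the closed-form identity by expanding the integrand of the defining integral as a binomial series in $1-t$, integrating term-by-term, and observing that the $t_0$-dependent contributions cancel precisely against the subtracted sum and logarithmic term built into the definition of $\beta_{t_0}$.

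First I would perform the change of variables $u=1-t$ to rewrite
\[
\int_{w}^{1-t_0} t^{a-1}(1-t)^{b-1}\,dt = \int_{t_0}^{1-w} (1-u)^{a-1}u^{b-1}\,du.
\]
For $0<t_0,1-w<1$ the binomial series
\[
(1-u)^{a-1}=\sum_{n\geq 0}(-1)^n\binom{a-1}{n}u^n
\]
converges uniformly on the (compact) interval of integration, so term-by-term integration is legitimate. For each $n\neq -b$ this contributes $\frac{(1-w)^{n+b}-t_0^{n+b}}{n+b}$; whenever $-b\in\N_0$, the excluded index $n=-b$ additionally produces $\log(1-w)-\log(t_0)$.

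Substituting this expansion back into the definition of $\beta_{t_0}(w;a,b)$, the $t_0^{n+b}$ pieces coming from the integral match, up to sign, the second sum in the definition and therefore cancel; similarly, the $\log(t_0)$ contribution produced by the excluded index $n=-b$ is cancelled by the last term of the definition. What remains is exactly the claimed series in $(1-w)^{n+b}$ together with the $\log(1-w)$ term, which simultaneously establishes the explicit formula and the independence of $t_0$.

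For the asymptotics as $w\to 1^-$ I would read off the leading behaviour directly from this closed form. When $b\neq 0$ the smallest relevant exponent $n+b$ occurs at $n=0$, yielding the dominant term $-\frac{1}{b}(1-w)^{b}$; any $\log(1-w)$ contribution requires $-b\in\N_0$ (so $b\leq 0$) and is then dominated by $(1-w)^{b}$. When $b=0$ the index $n=0$ is excluded, so the leading polynomial term comes from $n=1$, namely $(a-1)(1-w)$; if in addition $a\in\N$ the $\log(1-w)$ term is present and dominates this polynomial contribution. The main obstacle is the bookkeeping around the excluded index $n=-b$: for $-b\notin\N_0$ no logarithmic term is produced, whereas for $-b\in\N_0$ one must verify that the sign and coefficient of the $\log(t_0)$ arising from term-by-term integration exactly match the $\log(t_0)$ subtracted in the definition, so that the final formula depends on $w$ alone.
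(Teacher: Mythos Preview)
Your proposal is correct and takes essentially the same approach as the paper, whose proof consists of the single remark that the lemma ``follows directly by using the binomial series.'' Your argument---substitute $u=1-t$, expand $(1-u)^{a-1}$ binomially, integrate termwise, and observe that the $t_0$-dependent remainders are exactly what the definition subtracts---is the natural elaboration of this, and the coefficient-matching you flag for the $n=-b$ term is indeed the only point requiring care.
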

Lemma \ref{lem:betabnd} then directly implies the following corollary.
\begin{corollary}\label{cor:boldbetabnd}
As $r\to 0^+$, we have
\[
\bm{\beta}_{\kappa-1,-m}(r)\sim
\begin{cases}-\frac{1}{m+1}r^2&\text{if }m\neq -1,\\
\frac{2-\kappa}{2} r^4\delta_{\kappa<2} + 2 r^2\log(r) \delta_{\kappa\geq 2} &\text{if }m=-1.
\end{cases}
\]
\end{corollary}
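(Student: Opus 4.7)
The plan is to prove this as a direct consequence of Lemma \ref{lem:betabnd}, by substituting the asymptotic expansion of $\beta_{t_0}(1-t^2;\kappa-1,m+1)$ as $t\to 0^+$ into the defining integral
\[
\bm{\beta}_{\kappa-1,-m}(r) = -2\int_0^r t^{-2m-1}\left(1-t^2\right)^{-\kappa}\beta_{t_0}\!\left(1-t^2;\kappa-1,m+1\right)dt
\]
and integrating termwise. Near $t=0$, the factor $(1-t^2)^{-\kappa}$ contributes $1+O(t^2)$, so the entire asymptotic analysis is driven by the interaction between $t^{-2m-1}$ and the leading behavior of $\beta_{t_0}(1-t^2;\kappa-1,m+1)$ coming from Lemma \ref{lem:betabnd} with $w=1-t^2$ (so that $1-w=t^2$).

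First I would treat the case $m\neq -1$. Here $b:=m+1\neq 0$, so Lemma \ref{lem:betabnd} gives $\beta_{t_0}(1-t^2;\kappa-1,m+1)\sim -\frac{1}{m+1}t^{2(m+1)}$ as $t\to 0^+$, regardless of the sign of $m+1$ (the $n=0$ term in the binomial series dominates whether $m+1$ is positive, in which case the leading power vanishes, or negative, in which case it is the most singular). The integrand in the definition of $\bm{\beta}_{\kappa-1,-m}$ therefore behaves like a constant times $t$, and a single integration in $t$ from $0$ to $r$ produces an $r^2$ leading term with the constant claimed in the corollary.

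Next I would treat $m=-1$, where $b=0$ and the behavior of $\beta_{t_0}$ is dictated by whether the log term in Lemma \ref{lem:betabnd} is switched on by the condition $\delta_{a\in\N}\delta_{0\leq -b<a}$ with $a=\kappa-1$. Since $\kappa\in 2\Z$, this splits cleanly into the two subcases in the corollary. If $\kappa\geq 2$, then $\kappa-1\in\N$ and the log contributes the dominant term $-\log(1-w)$, which upon substituting $w=1-t^2$ becomes $-2\log(t)$; the integrand is then $\sim -2t\log(t)$, and using $\int_0^r t\log(t)\,dt=\frac{r^2}{2}\log(r)-\frac{r^2}{4}$, the $-2$ factor out front of the integral yields the leading term $2r^2\log(r)$. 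If instead $\kappa<2$, then there is no log term and the $n=1$ term of the series dominates, giving $\beta_{t_0}(1-t^2;\kappa-1,0)\sim(\kappa-2)t^2$; the integrand is $\sim(\kappa-2)t^3$, and integration yields $\frac{2-\kappa}{2}r^4$.

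The only technical point, which is not truly an obstacle but deserves care, is identifying the dominant term of $\beta_{t_0}$ uniformly in the sign of $m+1$ and in the cases that switch the log on or off; once that is pinned down, the remaining work is elementary integration. One should also check that the subleading contributions (coming from the higher terms of the expansion of $\beta_{t_0}$ and from the $(1-t^2)^{-\kappa}$ factor) produce strictly higher powers of $r$ than the ones retained, which is immediate since each additional factor of $t^2$ in the integrand raises the power of $r$ after integration by $2$.
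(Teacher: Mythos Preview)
Your approach is correct and is exactly what the paper intends: the paper's proof is the single line ``Lemma \ref{lem:betabnd} then directly implies the following corollary,'' and your proposal simply spells out that implication by inserting the leading asymptotics of $\beta_{t_0}(1-t^2;\kappa-1,m+1)$ from Lemma \ref{lem:betabnd} into the defining integral and integrating termwise. The case split on $m=-1$ versus $m\neq -1$, and within $m=-1$ on whether $\kappa-1\in\N$ (equivalently $\kappa\geq 2$ since $\kappa\in 2\Z$), is precisely the dichotomy in Lemma \ref{lem:betabnd}, so your route and the paper's are the same.
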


Using Lemma \ref{lem:betabnd} and Corollary \ref{cor:boldbetabnd}, one may determine those terms in the elliptic expansion that contribute to the principal part and those that do not.
\begin{lemma}\label{lem:sesquiellpp}
Suppose that $\mathcal{M}$ is sesquiharmonic of weight $\kappa$ and has singularities of finite order. Then the following hold:
\begin{enumerate}[leftmargin=*, label={\rm(\arabic*)}]
\item
The principal part of the meromorphic part $\mathcal{M}_{\z}^{++}$ of the elliptic expansion of $\mathcal{M}$ precisely comes from those $m$ with $m<0$.
\item
If $\kappa\in-\N_0$, then the principal part of the harmonic part $\mathcal{M}_{\z}^{+-}$ of the elliptic expansion of $\mathcal{M}$ precisely comes from those $m$ with $m\geq 0$.
\item
If $\kappa\notin-\N_0$, then the principal part of the harmonic part $\mathcal{M}_{\z}^{+-}$ of the elliptic expansion of $\mathcal{M}$ precisely comes from those $m$ with $m>0$.
\item
The principal part of the sesquiharmonic part $\mathcal{M}_{\z}^{--}$ of $\mathcal{M}$ precisely comes from those $m$ with $m\leq -3$.
\end{enumerate}
\end{lemma}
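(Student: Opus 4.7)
The plan is to read off the asymptotics of each summand in the three pieces of the elliptic expansion as $r_{\z}(z) \to 0^+$ from Lemma \ref{lem:betabnd} and Corollary \ref{cor:boldbetabnd}, and then record which $m$ produce a genuine singularity at $\z$. Since the prefactor $((z-\overline{\z})/(2\sqrt{\y}))^{-\kappa}$ is bounded and non-vanishing in a punctured neighborhood of $\z$ (taking value $(i\sqrt{\y})^{-\kappa}$ at $\z$), a summand contributes to the principal part precisely when $B_{\kappa,j}(r_{\z}(z); m) X_{\z}^m(z)$ fails to remain bounded near $\z$. The three parts are linearly independent solutions of the defining ODE for each fixed $m$, and summands for distinct $m$ are distinguished by the phase of $X_{\z}^m$, so no cancellation between them can occur; the finite-order hypothesis ensures that within each part only finitely many $m$ lie on the singular side.

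Part (1) is immediate: $|X_{\z}^m(z)| = r_{\z}(z)^m$ is singular exactly when $m < 0$. For (2) and (3), I would apply Lemma \ref{lem:betabnd} with $w = 1-r_{\z}^2(z)$, $a = 1-\kappa$, and $b = -m$. If $m \neq 0$, the leading asymptotic $\beta_{t_0}(1-r_{\z}^2; 1-\kappa, -m) \sim \tfrac{1}{m} r_{\z}^{-2m}$ combined with $|X_{\z}^m| = r_{\z}^m$ gives net behavior $r_{\z}^{-m}$, which is singular precisely for $m > 0$. If $m = 0$, Lemma \ref{lem:betabnd} gives $\beta_{t_0}(1-r_{\z}^2; 1-\kappa, 0) \sim -2\log r_{\z}$ when $1-\kappa \in \N$ (i.e.\ $\kappa \in -\N_0$) and $\sim -\kappa\, r_{\z}^2$ otherwise, producing a logarithmic singularity in the first case and regular behavior in the second. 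Combining these two regimes yields (2) and (3).

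For (4), Corollary \ref{cor:boldbetabnd} gives $\bm{\beta}_{\kappa-1,-m}(r_{\z}) \sim -\tfrac{1}{m+1} r_{\z}^2$ for $m \neq -1$, so the summand behaves like $r_{\z}^{m+2}$, singular exactly when $m \leq -3$. For $m = -1$ the asymptotic is either $\sim \tfrac{2-\kappa}{2} r_{\z}^4$ or $\sim 2 r_{\z}^2 \log r_{\z}$; multiplying by $|X_{\z}^{-1}| = r_{\z}^{-1}$ yields $r_{\z}^3$ or $r_{\z}\log r_{\z}$, both of which vanish at $\z$, so $m = -1$ never contributes to the principal part.

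The only real subtlety is the $m = 0$ branching in (2)/(3), whose dichotomy traces directly to whether $a = 1-\kappa$ is a positive integer and thus triggers the $\log$ summand in Lemma \ref{lem:betabnd}; beyond tracking this case carefully, the proof is a routine exercise in matching the asymptotics provided by Lemma \ref{lem:betabnd} and Corollary \ref{cor:boldbetabnd} against the power $|X_{\z}^m| = r_{\z}^m$.
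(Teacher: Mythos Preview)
Your proposal is correct and follows exactly the approach the paper indicates: the paper's proof is simply the one-line remark that Lemma~\ref{lem:betabnd} and Corollary~\ref{cor:boldbetabnd} determine which terms contribute to the principal part, and you have carried out precisely that computation. Your case analysis (including the $m=0$ dichotomy in (2)/(3) and the separate treatment of $m=-1$ and $m=-2$ in (4)) is accurate.
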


\subsection{Biharmonic parts of elliptic expansions}
For biharmonic forms we only require the coefficient of $X_{\z}^{-1}(z)$ of the biharmonic part of their elliptic expansion. By Lemma \ref{lem:betabnd} and Corollary \ref{cor:boldbetabnd}, $B_{2,2}(r;-1)=\beta_{t_0}(1-r^2;-1,1)$ and $B_{2,3}(r;-1)=\bm{\beta}_{1,1}(r)$ both vanish as $r\to 0^+$. We next show that $B_{2,4}(r;-1)$ may be chosen so that it also decays as $r\to 0^+$. The following lemma follows directly by \eqref{eqn:B3diff} and Lemma \ref{lem:betabnd}.

\begin{lemma}\label{lem:doublyharmonicellexp}
The function
\[
B_{2,4}(r;-1):=\frac{\log\left(1-r^2\right)+r^2}{1-r^2}
\]
satisfies \eqref{eqn:Bxirel} with $C_m=-2$, and hence the corresponding function from Lemma \ref{lem:xiellexp} has depth four. In particular, $\lim_{r\to 0^+} B_{2,4}(r;-1)=0$.
\end{lemma}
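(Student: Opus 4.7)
The plan is to verify \eqref{eqn:Bxirel} by direct computation of both sides, then invoke Lemma \ref{lem:xiellexp} to upgrade the depth-three function $B_{0,3}(r;0)$ to a depth-four function, and finally compute the limit by Taylor expansion.

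First I would compute the right-hand side of \eqref{eqn:Bxirel} with $\kappa=2$, $m=-1$, $C_m=-2$, namely
\[
-2(1-r^2)^{-2} r \cdot B_{0,3}(r;0),
\]
where by \eqref{eqn:B23def} one has $B_{0,3}(r;0)=\bm{\beta}_{-1,0}(r)$. The first sub-step is to evaluate $\beta_{t_0}(1-t^2;-1,1)$ using Lemma \ref{lem:betabnd}. Since $a=-1\notin \N$, only the sum survives, and using $\binom{-2}{n}=(-1)^n(n+1)$ the sum collapses to the geometric series $-\sum_{n\geq 0}(1-w)^{n+1}=-(1-w)/w$, so $\beta_{t_0}(1-t^2;-1,1)=-t^2/(1-t^2)$. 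Plugging this into the integral defining $\bm{\beta}_{-1,0}$ yields
\[
B_{0,3}(r;0)=\bm{\beta}_{-1,0}(r)=-2\int_0^r t^{-1}\left(-\tfrac{t^2}{1-t^2}\right)dt=-\log(1-r^2).
\]
Therefore the right-hand side of \eqref{eqn:Bxirel} equals $2r\log(1-r^2)/(1-r^2)^2$.

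Next I would differentiate $B_{2,4}(r;-1)$. Writing
\[
B_{2,4}(r;-1)=\frac{\log(1-r^2)}{1-r^2}+\frac{1}{1-r^2}-1
\]
and applying the quotient rule to each term yields, after cancellation, exactly $B_{2,4}'(r;-1)=2r\log(1-r^2)/(1-r^2)^2$. This matches the previous computation, so \eqref{eqn:Bxirel} holds with $C_m=-2$. The depth-four statement then follows from Lemma \ref{lem:xiellexp}(2): since $B_{0,3}(r;0)$ is (by construction) the radial profile of a depth-three function, the relation \eqref{eqn:Bxirel} ensures that $\xi_{2}$ applied to the function $\left(\tfrac{z-\overline{\z}}{2\sqrt{\y}}\right)^{-2} B_{2,4}(r_{\z}(z);-1)X_{\z}^{-1}(z)$ lands in depth three, making the original function of pure depth four.

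For the limit, I would Taylor expand $\log(1-r^2)=-r^2-\tfrac{r^4}{2}+O(r^6)$, so that $\log(1-r^2)+r^2=O(r^4)$ as $r\to 0^+$, while $1/(1-r^2)\to 1$. Hence $\lim_{r\to 0^+}B_{2,4}(r;-1)=0$. The only step that requires genuine care is the evaluation of $\bm{\beta}_{-1,0}(r)$, since it involves the somewhat delicate combinatorial identity $\binom{-2}{n}=(-1)^n(n+1)$ that makes the infinite series sum to an elementary function; everything else is bookkeeping.
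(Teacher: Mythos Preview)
Your proof is correct and follows essentially the same route as the paper, which simply cites \eqref{eqn:B3diff} and Lemma \ref{lem:betabnd}; you have merely unpacked these references by evaluating $\beta_{t_0}(1-t^2;-1,1)=-t^2/(1-t^2)$ explicitly and integrating to obtain $\bm{\beta}_{-1,0}(r)=-\log(1-r^2)$, then matching against the derivative of $B_{2,4}(r;-1)$. The depth-four conclusion via Lemma \ref{lem:xiellexp}(2) and the limit via Taylor expansion are both fine.
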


\subsection{Elliptic expansions of the functions from Section \ref{sec:construction}} We next explicitly determine the shape of the expansions of the functions defined in Section \ref{sec:construction}.
\begin{lemma}\label{lem:ellexps}
	\noindent
	\begin{enumerate}[leftmargin=*, label={\rm(\arabic*)}]
		\item
 		There exist $c_{j_n,\z}(m)$, $ c_{\calJ_n,\z}^{+}(m)$, $c_{\calJ_n,\z}^-(m)$, $c_{\jj_n,\z}^{++}(m)$, $c_{\jj_n,\z}^{+-}(m)$, and $c_{\jj_n,\z}^{--}(m)\in\C$ such that we have for $r_{\z}(z)$ sufficiently small
		\begin{align*}
j_n(z)&=\sum_{m\geq 0} c_{j_n,\z}(m) X_{\z}^m(z),\\
\calJ_{n}(z)&=\left(\frac{z-\overline{\z}}{2\sqrt{\y}}\right)^{-2}\left(\sum_{m\geq 0} c_{\calJ_{n},\z}^+(m)X_{\z}^m(z) + \sum_{m\leq -1}c_{\calJ_{n},\z}^-(m) \beta_{t_0}\!\left(1-r_{\z}^2(z);-1-m\right)X_{\z}^m(z)\right),\\
\notag \jj_n(z)&=\sum_{m\geq 0} c_{\jj_n,\z}^{++}(m)X_{\z}^m(z) + \sum_{m\leq-1} c_{\jj_n,\z}^{+-}(m)\beta_{t_0}\!\left(1-r_{\z}^2(z);1-m\right)X_{\z}^m(z)\\
&\hspace{2.8in}		+ \sum_{m\geq 0}c_{\jj_n,\z}^{--}(m)\bm{\beta}_{-1,-m}\!\left(r_{\z}(z)\right)X_{\z}^m(z).
\end{align*}
\item  For every $w\in\H$, there exist $c_{\GG_{\z},w}^{+++}(m)$, $c_{\GG_{\z},w}^{++-}(m)$, $c_{\GG_{\z},w}^{+--}(m)$, $c_{\bgg_{\z},w}^{++}(m)$, $c_{\bgg_{\z},w}^{+-}(m)$, and $c_{\bgg_{\z},w}^{--}(m)\in\C$ such that for $r_{w}(z)$ sufficiently small we have
\begin{align*}
		\mathcal{G}_{\z}(z)&= \left(\frac{z-\overline{w}}{2\sqrt{\im(w)}}\right)^{-2}\!\!\left(c_{\bgg_{\z},w}^{+-}(0)X_{w}^{-1}(z)+ \sum_{m \geq 0} \overline{c^{+-}_{\bgg_{\z},w}(-m-1)} X_w^{m}(z) + \frac{6 r_{w}^2(z)}{1- r_w^2(z)} X_w^{-1} (z) \right),\\
		\bgg_{\z}(z) &= \sum_{m\geq 0} c_{\bgg_{\z},w}^{++}(m) X_{w}^m(z) +\sum_{m\leq  0} c_{\bgg_{\z},w}^{+-}(m)\beta_{t_0}\!\left(1-r_{w}^2(z);1,-m\right) X_{w}^m(z)+ 6 \log\!\left(1-r_{w}^2(z)\right),\\
		\GG_{\z}(z)&=\left(\frac{z-\overline{w}}{2\sqrt{\im(w)}}\right)^{-2}\Bigg(\sum_{m\geq 0} c_{\GG_{\z},w}^{+++}(m)X_{w}^{m}(z) + \sum_{m\leq -1} c_{\GG_{\z},w}^{++-}(m)\beta_{t_0}\!\left(1-r_{w}^2(z);-1,-m\right) X_{w}^m(z)\\
	&\hspace{1.1in}	+\sum_{m\geq -1} c_{\GG_{\z},w}^{+--}(m)\bm{\beta}_{1,-m}\!\left(r_{w}(z)\right) X_{w}^m(z)-6\frac{\log\!\left(1-r_{w}^2(z)\right)+r_w^2(z)}{1-r_{w}^2(z)}X_{w}^{-1}(z)\Bigg).
\end{align*}
Moreover,  $c_{ \bgg_{\z},w}^{+-}(0)=0$  unless $w$ is equivalent to $\z$, in which case we have $c_{\bgg_{\z},\z}^{+-}(0)=-\frac{\omega_{\z}}{2}$.
\end{enumerate}
\end{lemma}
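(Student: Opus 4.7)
The strategy is to combine the general structural results on elliptic expansions of sesquiharmonic forms from Lemma \ref{lem:sesquiellexp}, the location of principal parts from Lemma \ref{lem:sesquiellpp}, and the depth-four basis element from Lemma \ref{lem:doublyharmonicellexp}, with the $\xi$-operator chains in \eqref{eqn:Jxi} and \eqref{eqn:HGxi} and the singularity data from Lemma \ref{lem:Greensprop}. At each step, Lemma \ref{lem:xiellexp} (1) together with the shift relation \eqref{eqn:Bxirel} translates elliptic coefficients between neighboring depths and weights.

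For part (1), the expansion of $j_n$ is immediate since $j_n$ is holomorphic. For $\jj_n$, Lemma \ref{lem:sesquiellexp} gives an expansion of the claimed shape with all three sums running \emph{a priori} over $m\in\Z$; the defining property that $\jj_n$ has a trivial principal part in its elliptic expansion around every point, combined with Lemma \ref{lem:sesquiellpp}, then restricts the meromorphic, harmonic, and sesquiharmonic sums to $m\geq 0$, $m\leq -1$, and $m\geq 0$ respectively. The expansion of $\calJ_n=-\xi_0(\jj_n)$ follows by applying $\xi_0$ term-by-term: the holomorphic piece is annihilated, while by Lemma \ref{lem:xiellexp} (1) together with \eqref{eqn:B2diff} and \eqref{eqn:B3diff} the $X_{\z}^m$-coefficient of the harmonic part of $\jj_n$ maps to the $X_{\z}^{-m-1}$-coefficient of the meromorphic part of $\calJ_n$, and the $X_{\z}^m$-coefficient of the sesquiharmonic part of $\jj_n$ maps to the $X_{\z}^{-m-1}$-coefficient of the harmonic part of $\calJ_n$ with basis element $\beta_{t_0}(1-r_{\z}^2(z);-1,m+1)$. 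Reindexing $m'=-m-1$ gives the stated ranges.

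For part (2), the key input for $\bgg_{\z}$ is $\Delta_0(\bgg_{\z})=6$ from Lemma \ref{lem:Greensprop} (3). Using the hyperbolic identity $1-r_w^2(z)=\frac{4\im(z)\im(w)}{|z-\overline{w}|^2}$ one verifies that $\Delta_0(\log(1-r_w^2(z)))=1$, which forces the depth-three part of the elliptic expansion of $\bgg_{\z}$ at every $w$ to contain exactly the term $6\log(1-r_w^2(z))$. Since $\bgg_{\z}$ is smooth at $w\not\sim\z$, Lemma \ref{lem:betabnd} (noting that $\beta_{t_0}(1-r^2;1,0)=-\log(r^2)$) forces $c_{\bgg_{\z},w}^{+-}(0)=0$ there, whereas at $w\sim\z$ the principal part $\omega_{\z}\log r_{\z}$ of $\bgg_{\z}$, coming from the zero of order $\omega_{\z}$ of $j(z)-j(\z)$ in the local uniformizer, yields $c_{\bgg_{\z},\z}^{+-}(0)=-\omega_{\z}/2$. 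The expansion of $\mathcal{G}_{\z}=\xi_0(\bgg_{\z})$ is then obtained by applying $\xi_0$ term-by-term, with the depth-three term mapping by direct chain-rule computation to $\bigl(\frac{z-\overline{w}}{2\sqrt{\im(w)}}\bigr)^{-2}\frac{6r_w^2(z)}{1-r_w^2(z)}X_w^{-1}(z)$.

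Finally, $\GG_{\z}$ is biharmonic with $\xi_2(\GG_{\z})=\bgg_{\z}$; its sesquiharmonic components are governed by Lemma \ref{lem:sesquiellexp} applied in weight two, with summation ranges obtained from those for $\bgg_{\z}$ via the $\xi_2$-shift $m\mapsto -m-1$, while the new depth-four contribution is isolated in the $X_w^{-1}(z)$ coefficient and uses the basis element $B_{2,4}(r;-1)$ from Lemma \ref{lem:doublyharmonicellexp}. Since by \eqref{eqn:Bxirel} with $C_{-1}=-2$ the $\xi_2$-image of this element is a depth-three function proportional to $\log(1-r^2)$, matching against the term $6\log(1-r_w^2(z))$ in $\bgg_{\z}$ forces the depth-four coefficient of $\GG_{\z}$ to equal $-6$, yielding the claimed shape. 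The main obstacle is to carry out the explicit chain-rule computations that identify $6\log(1-r_w^2(z))$, $\frac{6r_w^2(z)}{1-r_w^2(z)}X_w^{-1}(z)$, and $B_{2,4}(r_w(z);-1)X_w^{-1}(z)$ as the correct depth-three and depth-four pieces with the normalizations dictated by Lemma \ref{lem:Greensprop}, and to correctly invoke the ``trivial principal part'' characterization of $\jj_n$ in conjunction with Lemma \ref{lem:sesquiellpp} to pin down the summation ranges.
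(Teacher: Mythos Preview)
Your overall strategy is sound and largely parallels the paper's, but there are two genuine gaps.

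In part (1), invoking ``trivial principal part'' together with Lemma~\ref{lem:sesquiellpp} does not suffice to restrict the sesquiharmonic sum for $\jj_n$ to $m\geq 0$. Lemma~\ref{lem:sesquiellpp}~(4) only says that the principal part of the sesquiharmonic piece comes from $m\leq -3$; by Corollary~\ref{cor:boldbetabnd} the terms $m=-1,-2$ are bounded as $r_{\z}\to 0$ and hence are not excluded by requiring a trivial principal part. The paper closes this by observing that $\Delta_0(\jj_n)=j_n$ is holomorphic at $\z$: applying \eqref{eqn:xiB3} and then \eqref{eqn:xiB2}, a sesquiharmonic $X_{\z}^m$-term in $\jj_n$ produces a meromorphic $X_{\z}^{m}$-term in $j_n$, so $m<0$ is impossible.

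In part (2), your shift argument $m\mapsto -m-1$ correctly determines the harmonic and sesquiharmonic summation ranges of $\GG_{\z}$ from those of $\bgg_{\z}$, but it cannot constrain the \emph{meromorphic} range, since the meromorphic part of $\GG_{\z}$ lies in $\ker\xi_2$. You give no reason why $c_{\GG_{\z},\z}^{+++}(m)=0$ for $m<0$, i.e., why $\GG_{\z}$ has no meromorphic pole at $\z$. This is precisely where the paper uses the analytic input from Lemma~\ref{lem:diffseeds}~(2): the principal part of $\GG_{\z}$ is traced back to the seed $\omega_{\z}\mathfrak{g}_{\z,1}(z)=\bigl(\tfrac{z-\overline{\z}}{2\sqrt{\y}}\bigr)^{-2}B(r_{\z}(z))X_{\z}^{-1}(z)$ of the Poincar\'e series, $B(r)$ is expanded in the basis $B_{2,j}(r;-1)$, and the limit $\lim_{r\to 0^+}B(r)=0$ from \eqref{eqn:rlim} (together with the vanishing of $B_{2,j}(r;-1)$ at $r=0$ for $j\geq 2$ from Lemma~\ref{lem:betabnd}, Corollary~\ref{cor:boldbetabnd}, and Lemma~\ref{lem:doublyharmonicellexp}) forces the depth-one coefficient $a_1$ to vanish. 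Without this step your argument does not establish the claimed expansion of $\GG_{\z}$. Note also that the paper proceeds in the opposite order---proving the expansion of $\GG_{\z}$ first and then obtaining $\bgg_{\z}$ and $\mathcal{G}_{\z}$ by applying $\xi_2$---which is why this issue of lifting through $\ker\xi_2$ does not arise there.
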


\begin{proof}

\noindent
(1) Lemma \ref{lem:sesquiellpp}
gives the claim, up to the vanishing of the terms $m=-1$ and $m=-2$ in the sesquiharmonic part. Applying \eqref{eqn:xiB3} and then \eqref{eqn:xiB2} to these terms, this vanishing follows from the fact that $j_n=\Delta_0(\jj_n)$ does not have a singularity. The other claims then follow by \eqref{eqn:xiB2} and \eqref{eqn:xiB3}.

\noindent
(2) We begin by proving the expansion for $\GG_{\z}$ and we first determine its biharmonic part. Since $\bgg_{\z}(z)-6\log(y)$ is harmonic by Lemma \ref{lem:Greensprop} (3), $\xi_2$ maps the biharmonic part of $\GG_{\z}$ to $6$ times the sesquiharmonic part of the elliptic expansion of $\log(y)$, which by \eqref{eqn:coshr} may be computed via
\begin{equation}\label{eqn:logy}
\log(y)=\log(\im(w))+\log\left(1-r_{w}^2(z)\right)-\Log\!\left(1-X_{w}(z)\right)-\Log\!\left(1-\overline{X_{w}(z)}\right).
\end{equation}
Since the third and fourth terms are harmonic, they do not contribute to the sesquiharmonic part, and hence the sesquiharmonic part of $\bgg_{\z}$ is precisely $6\log(1-r_{w}^2(z))=-6B_{0,3}(r_{w}(z);0)$. Hence, by Lemma \ref{lem:doublyharmonicellexp} and Lemma \ref{lem:xiellexp} (1), we see that
\begin{equation}\label{eqn:doublyHHhat}
\GG_{\z}(z)+6\left(\frac{z-\overline{w}}{2\sqrt{\im(w)}}\right)^{-2}\frac{\log\!\left(1-r_{w}^2(z)\right)+r_w^2(z)}{1-r_{w}^2(z)}X_{w}^{-1}(z)
\end{equation}
is sesquiharmonic. Choosing the function from Lemma \ref{lem:doublyharmonicellexp} as a basis element for the biharmonic part of the elliptic expansion, \eqref{eqn:doublyHHhat} is precisely the
sum of the terms in the elliptic expansion of $\GG_{\z}$ of depth at most three. The shape of this expansion is given in Lemma \ref{lem:sesquiellexp}, and we are left to determine the ranges of the summation. Recall that $\xi_2$ sends the $X_{\z}^{m}(z)$ term to an $X_{\z}^{-m-1}(z)$ term by Lemma \ref{lem:xiellexp} and the principal part of $\xi_{2}(\mathbb{G}_{\z})=\bgg_{\z}$ is a constant multiple of $\log(r_{\z}(z))=-\frac{1}{2}\beta_{t_0}(1-r_{\z}^2(z);1;0)$, which follows directly from Lemma \ref{lem:betabnd}. Thus the sesquiharmonic principal part comes from an $m=-1$ term in the elliptic expansion of $\GG_{\z}$ and it remains to explicitly compute the meromorphic principal part of $\GG_{\z}$. Since for $\re(s)>1$ the principal part of $\GG_{\z,s}$ comes from $2\omega_{\z}\mathfrak{g}_{s}(z,\z)$, taking the limit we conclude that the principal part of $\GG_{\z}$ comes from $\omega_{\z}\mathfrak{g}_{\z,1}$ by Lemma \ref{lem:diffseeds} (2). Since $g_{s}(z,\z)$ is an eigenfunction under $\Delta_0$ with eigenvalue $s(1-s)$, we may use Lemma \ref{lem:Laurent} (2) with $\kappa=0$ to conclude that $\mathfrak{g}_{\z,1}$ is annihilated by $\Delta_2^2$. Since $\mathfrak{g}_{\z,1}(z)=(\frac{z-\overline{\z}}{2\sqrt{\y}})^{-2} B(r_{\z}(z)) X_{\z}^{-1}(z)$ is biharmonic, we may hence write the elliptic expansion of $\mathfrak{g}_{\z,1}$ as
\[
\mathfrak{g}_{\z,1}(z)=\left(\frac{z-\overline{\z}}{2\sqrt{\y}}\right)^{-2}\sum_{j=1}^4 a_jB_{2,j}\!\left(r_{\z}(z);-1\right)  X_{\z}^{-1}(z)
\]
for some $a_j\in\C$, and we see that $c_{\GG_{\z},\z}^{+++}(-1)=\omega_{\z}a_1$.

Moreover,  Lemmas \ref{lem:sesquiellexp} and \ref{lem:doublyharmonicellexp} together with \eqref{eqn:frakg} and \eqref{eqn:xicoshr} imply that
\begin{equation}\label{eqn:constQs-1}
B(r)= a_1+a_2\beta_{t_0}\!\left(1-r^2; -1,1\right) +a_3\bm{\beta}_{1,1}(r)+a_4\frac{\log\!\left(1-r^2\right)+r^2}{1-r^2}.	
\end{equation}	
We then note that by Lemma \ref{lem:betabnd} and Corollary \ref{cor:boldbetabnd}, the limit $r\to 0^+$ on the right-hand side of \eqref{eqn:constQs-1} equals $a_1$. Using \eqref{eqn:rlim} to evaluate the limit on the left-hand side of \eqref{eqn:constQs-1}, we obtain $a_1=0$. We hence conclude the claim for $\GG_{\z}$ by Lemma \ref{lem:sesquiellpp}.

In order to obtain the elliptic expansion for $\bgg_{\z}$, we apply the operator $\xi_2$ to the expansion for $\GG_{\z}$. Using Lemma \ref{lem:xiellexp} (1) and \eqref{eqn:Bxirel} (together with \eqref{eqn:B2diff} and \eqref{eqn:B3diff}) and Lemma \ref{lem:doublyharmonicellexp}, we conclude the claimed expansion. Similarly, we apply $\xi_0$ to the expansion of $\bgg_{\z}$ and use Lemma \ref{lem:xiellexp} and \eqref{eqn:Bxirel} to obtain the claimed expansion for $\mathcal{G}_{\z}$.
$\qedhere$
	
\end{proof}

\section{Evaluating the inner product and the proof of Theorem \ref{thm:jninnergen}}\label{sec:jninner}

To define the regularization of the inner product used in this paper, we set
\begin{equation*}
\mathcal F_{T,\z_1,\ldots,\z_\ell,\varepsilon_1,\ldots,\varepsilon_\ell} := \mathcal F_T\setminus \bigcup_{j=1}^{\ell}\left(\mathcal B_{\varepsilon_j}(z_j)\cap \mathcal F_T\right),
\end{equation*}
where $\mathcal F_T$ is the usual fundamental domain cut off at height $T$ and
\begin{equation*}
\mathcal{B}_{\varepsilon_j}\!\left(\z_j\right):=\left\{ z\in\H:r_{\z_j}(z)<\varepsilon_j\right\}.
\end{equation*}
For two functions $g$ and $h$ satisfying weight $k$ modularity and whose singularities in the fundamental domain lie in $\{\z_1,\dots \z_{\ell},i\infty\}$, we define the generalized inner product (in case of existence)
\begin{equation}\label{eqn:innerdef}
\left<g,h\right>:=\lim_{T\to\infty}\lim_{\varepsilon_\ell\to 0^+}\cdots\lim_{\varepsilon_{1}\to 0^+}\left<g,h\right>_{T,\varepsilon_1,\dots,\varepsilon_{\ell}},
\end{equation}
where
\[
\left<g,h\right>_{T,\varepsilon_1,\dots,\varepsilon_{\ell}}:=\int_{\mathcal{F}_{T,\mathfrak{z}_1,\dots,\mathfrak{z}_{\ell},\varepsilon_1,\dots,\varepsilon_{\ell}}}
g(z)\overline{h(z)} y^k \frac{dxdy}{y^2}.
\]

Before explicitly computing the inner product with $j_n$, we give the following lemma for evaluating the inner product, which follows by repeated usage of Stokes' Theorem.
\begin{lemma}\label{lem:innerggen}
Suppose that $\mathbb{F}_1,\mathbb{F}_2:\H\to\C$ satisfy weight two modularity and that $\xi_2\circ \Delta_{2}(\mathbb{F}_2)=C$ is a constant. Then, denoting  $\mathbbm{f}_j:=\xi_{2}(\mathbb{F}_j)$, $
\mathcal F_j:=\xi_0(\mathbbm{f}_j)
$, and $f_j:=-\xi_2(\mathcal F_j)$, we have
\begin{multline}\label{eqn:innerggen}
\left<f_1,\mathbbm{f}_2\right>_{T,\varepsilon_1,\dots,\varepsilon_{\ell}} = \overline{\int_{\partial \mathcal{F}_{T,\mathfrak{z}_1,\dots,\mathfrak{z}_{\ell},\varepsilon_1,\dots,\varepsilon_{\ell}}} \mathcal{F}_1(z)\mathbbm{f}_{2}(z) dz}\\
-\int_{\partial \mathcal{F}_{T,\mathfrak{z}_1,\dots,\mathfrak{z}_{\ell},\varepsilon_1,\dots,\varepsilon_{\ell}}}\mathbbm{f}_1(z)\mathcal{F}_{2}(z)dz-\overline{C}\overline{\int_{\partial \mathcal{F}_{T,\mathfrak{z}_1,\dots,\mathfrak{z}_{\ell},\varepsilon_1,\dots,\varepsilon_{\ell}}} \mathbb{F}_1(z)dz}.
\end{multline}
Here the integral along the boundary $\partial \mathcal{F}_{T}$ is oriented counter-clockwise and the integral along the boundary $\partial \mathcal{B}_{\varepsilon_j}(\mathfrak{z}_j)$ is oriented clockwise. In particular $\left<f_1,\mathbbm{f}_2\right>$ equals the limit as $T\to \infty$, $\varepsilon_j\to 0^+$, $1\leq j \leq \ell$, of the right-hand side of \eqref{eqn:innerggen}, assuming that the regularized integrals exist.
\end{lemma}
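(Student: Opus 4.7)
The proof amounts to three successive applications of Stokes' theorem, unwinding the definitions $f_1=-\xi_2(\mathcal{F}_1)$, $\mathcal{F}_1=\xi_0(\mathbbm{f}_1)$, $\mathbbm{f}_1=\xi_{2}(\mathbb{F}_1)$, and similarly for the subscript $2$ side. The plan is as follows.

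First I would rewrite the integrand. Since $f_1=-\xi_2(\mathcal{F}_1)=-2iy^2\overline{\partial_{\bar z}\mathcal{F}_1}$, and since both $f_1$ and $\mathbbm{f}_2$ have weight zero so the measure is $y^{-2}dxdy$, one gets
\[
f_1(z)\overline{\mathbbm{f}_2(z)}\frac{dxdy}{y^2}=-2i\,\partial_z\overline{\mathcal{F}_1}\cdot\overline{\mathbbm{f}_2}\,dxdy.
\]
Using the product rule and $\partial_z\overline{\mathbbm{f}_2}=\overline{\partial_{\bar z}\mathbbm{f}_2}=\mathcal{F}_2/(2i)$ (which follows from $\xi_0(\mathbbm{f}_2)=\mathcal{F}_2$), this becomes
\[
-2i\,\partial_z\!\left(\overline{\mathcal{F}_1}\,\overline{\mathbbm{f}_2}\right)dxdy+\overline{\mathcal{F}_1}\mathcal{F}_2\,dxdy.
\]
Now use $\partial_z g\,dxdy=-\tfrac{1}{2i}d(g\,d\bar z)$ and Stokes' theorem; the first piece yields exactly $\overline{\int_{\partial\calF_{T,\ldots}}\mathcal{F}_1\mathbbm{f}_2\,dz}$.

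Next I would apply Stokes again to the leftover area integral $\int_{\mathcal{F}_{T,\ldots}}\overline{\mathcal{F}_1}\mathcal{F}_2\,dxdy$. Writing $\overline{\mathcal{F}_1}=\overline{\xi_0(\mathbbm{f}_1)}=-2i\,\partial_{\bar z}\mathbbm{f}_1$ and again using the product rule,
\[
\overline{\mathcal{F}_1}\mathcal{F}_2\,dxdy=-2i\,\partial_{\bar z}(\mathbbm{f}_1\mathcal{F}_2)\,dxdy+2i\,\mathbbm{f}_1\,\partial_{\bar z}\mathcal{F}_2\,dxdy.
\]
The key point here is the hypothesis $\xi_2\circ\Delta_2(\mathbb{F}_2)=C$. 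Since $\Delta_2=-\xi_0\circ\xi_2$, this gives $\xi_2(\mathcal{F}_2)=-C$, so $\partial_{\bar z}\mathcal{F}_2=\overline{C}/(2iy^2)$ and the second summand above equals $\overline{C}\,\mathbbm{f}_1\,dxdy/y^2$. Using $\partial_{\bar z}g\,dxdy=\tfrac{1}{2i}d(g\,dz)$ together with Stokes' theorem, the first summand becomes $-d(\mathbbm{f}_1\mathcal{F}_2\,dz)$, whose integral produces the boundary term $-\int_{\partial\mathcal{F}_{T,\ldots}}\mathbbm{f}_1\mathcal{F}_2\,dz$.

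Finally I would handle the residual area integral $\overline{C}\int_{\calF_{T,\ldots}}\mathbbm{f}_1\,dxdy/y^2$. Writing $\mathbbm{f}_1=\xi_2(\mathbb{F}_1)=2iy^2\overline{\partial_{\bar z}\mathbb{F}_1}$, the integrand collapses to $2i\,\partial_z\overline{\mathbb{F}_1}\,dxdy=-d(\overline{\mathbb{F}_1}\,d\bar z)$, and one last application of Stokes' theorem delivers $-\overline{C}\,\overline{\int_{\partial\mathcal{F}_{T,\ldots}}\mathbb{F}_1\,dz}$. Combining the three boundary contributions reproduces the right-hand side of \eqref{eqn:innerggen}.

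The routine part is the bookkeeping of signs and of conjugation passing through $i$ and $y^2$; the main content is the identification $\xi_2(\mathcal{F}_2)=-C$, which is exactly what allows the second integration by parts to terminate with a closed form involving only $\mathbb{F}_1$ and the constant $C$. No convergence issues arise at the boundary since each step is an application of Stokes' theorem on the compact region $\mathcal{F}_{T,\z_1,\ldots,\z_\ell,\varepsilon_1,\ldots,\varepsilon_\ell}$ where all functions are smooth. The orientation conventions on $\partial\mathcal{B}_{\varepsilon_j}(\z_j)$ (clockwise) and on $\partial\mathcal{F}_T$ (counter-clockwise) are consistent with the outward-pointing normal on $\mathcal{F}_{T,\ldots}$, matching the convention stated after \eqref{eqn:innerggen}.
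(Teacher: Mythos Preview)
Your proof is correct and matches the paper's approach: the paper states only that the lemma ``follows by repeated usage of Stokes' Theorem'' without supplying details, and you have carried out precisely those three applications of Stokes' theorem with the expected bookkeeping. The identification $\xi_2(\mathcal{F}_2)=-C$ via $\Delta_2=-\xi_0\circ\xi_2$ is exactly the step that terminates the induction, as you note.
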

We are now ready to prove Theorem \ref{thm:jninnergen} with the explicit constant
\begin{equation}\label{definecn}
c_n:= 6\overline{ c_{\JJ_n}^{+++}(0)}+  6 c_{\jj_n}^{+-}(0).
\end{equation}

\begin{proof}[Proof of Theorem \ref{thm:jninnergen}]
We begin by recalling the well-known fact that every meromorphic modular form $f$ may be written as a product of the form (for example, see \cite[(61)]{Pe7})
\[
f(z)=c \Delta(z)^{\ord_{i\infty}(f)}\prod_{\z\in\SL_2(\Z)\backslash\H} \left(\Delta(z)\Big(j(z)-j(\z)\Big)\right)^{\frac{\ord_{\z}(f)}{\omega_{\z}}}
\]
with $c\in \C$. In particular, if $\ord_{i\infty}(f)=0$ and $f$ is normalized so that $c=1$,  then we see that
\begin{equation}\label{logfG}
\log\left(y^{\frac{k}{2}}|f(z)|\right)= \sum_{\z \in \SL_2(\Z)\backslash \H} \frac{\ord_\z(f)}{\omega_{\z}} \bgg_{\z}(z).
\end{equation}
By linearity of the inner product, it suffices to prove \eqref{eqn:jnbggz}, because Theorem \ref{thm:jninnergen} follows from \eqref{logfG} and \eqref{eqn:jnbggz} together with the valence formula.
\rm

We take $\mathbb{F}_1:=\JJ_n$ and $\mathbb{F}_2:=\GG_{\z}$ in Lemma \ref{lem:innerggen} and evaluate the contribution from the three terms in \eqref{eqn:innerggen}. We have $\mathbbm{f}_1=\jj_n$, $\mathcal{F}_1=-\calJ_n$, $f_1=j_n$,  $\mathbbm{f}_2=\bgg_{\z}$, $\mathcal{F}_2=\mathcal{G}_{\z}$, and $C=6$.

We first compute the contribution to the three terms in \eqref{eqn:innerggen} along the boundary near $i\infty$:
\begin{equation}\label{eqn:boundaryinfty}
\left\{x+iT: -\frac{1}{2}\leq x\leq \frac{1}{2}\right\}.
\end{equation}
Since the integral is oriented counter-clockwise from $\frac{1}{2}$ to $-\frac{1}{2}$, these yield the negative of the constant terms of the corresponding Fourier expansions. By Lemma \ref{lem:Fourierexps}, the first term equals
\begin{multline}\label{eqn:innerinfty1}
\sum_{m\geq 1} \overline{c_{\bgg_{\z}}^{++}(m) c_{\calJ_n}^{-}(-m)}W_2(-2\pi mT) -6\delta_{n=0}\frac{\log(T)}{T}\\
+ \sum_{m\leq -1}\overline{c_{\bgg_{\z}}^{+-}(m)}\left(\overline{c_{\calJ_n}^+(-m)}+4\pi n \delta_{m=-n}
\delta_{n\neq 0}
W_2(2\pi nT)\right) W_0(2\pi mT)+ 6\overline{c_{\calJ_n}^+(0)}\log(T).
\end{multline}
By Lemma \ref{lem:Wgrowth}, every term in \eqref{eqn:innerinfty1} other than the term with $\log(T)$ vanishes as $T\to\infty$.

By Lemma \ref{lem:Fourierexps}, the contribution from \eqref{eqn:boundaryinfty} from the second term in \eqref{eqn:innerggen} equals
\begin{equation}\label{eqn:innerinfty2}
-4\pi \sum_{m\geq 1}m \overline{c_{\bgg_{\z}}^{+-}(-m)}\left(c_{\jj_n}^{+-}(-m)W_0(-2\pi mT)+2\delta_{m=-n}\bm{\W}_{0}(-2\pi nT)\right)+6\delta_{n=0}\frac{1+\log(T)}{T}+6 c_{\jj_n}^{+-}(0).
\end{equation}
By Lemma \ref{lem:Wgrowth}, every term in \eqref{eqn:innerinfty2} other than $6c_{\jj_n}^{+-}(0)$ vanishes as $T\to\infty$.

The third integral yields $6$ times the complex conjugate of the constant term of $\JJ_n$, which has the shape in \eqref{eqn:Fourierconstant}. Since $\kappa=2>1$, every term in \eqref{eqn:Fourierconstant} other than
\begin{equation}\label{eqn:innerinfty3}
6\overline{c_{\JJ_n}^{+++}(0)} + 6\overline{c_{\JJ_{n}}^{+--}(0)}\log(T)
\end{equation}
vanishes as $T\to\infty$. However, since $\xi_{2}\!\left(\log(y)\right) = y$ and $\xi_0(y)=1$, we conclude from \eqref{eqn:Jxi} that
\begin{equation}\label{duality}
c_{\JJ_{n}}^{+--}(0)=\overline{c_{\jj_n}^{+-}(0)} = -c_{\calJ_{n}}^+(0).
\end{equation}
Combining \eqref{eqn:innerinfty1}, \eqref{eqn:innerinfty2}, \eqref{eqn:innerinfty3}, and \eqref{duality}, we conclude that the limit $T\to \infty$ of the contribution to \eqref{eqn:innerggen} from the integral along \eqref{eqn:boundaryinfty} is overall
\begin{equation}\label{eqn:innerinfty}
6
\overline{c_{\JJ_n}^{+++}(0)}+
6
c_{\jj_n}^{+-}(0).
\end{equation}

We next compute the contribution from the integral along $\partial\!\left( \mathcal{B}_{\varepsilon}(\z)\cap\mathcal{F}\right)$. Note that for a function $F$ satisfying weight two modularity we have
\begin{equation*}
\int_{\partial\left(\mathcal{B}_{\varepsilon}(\z)\cap \mathcal{F}\right)} F(z) dz = \frac{1}{\omega_{\z}}\int_{\partial\mathcal{B}_{\varepsilon}(\z)}F(z)dz.
\end{equation*}
Moreover a straightforward calculation yields that, for $\ell\in \Z$,
\begin{equation}\label{int2}
\frac{1}{2\pi i}\int_{\partial \mathcal{B}_{\varepsilon}(\z)} (z-\overline{\z})^{-2}X_{\z}^{\ell}(z)dz=
        \begin{cases}
        (\z-\overline{\z})^{-1}&\text{if }\ell=-1,\\
        0 &\text{otherwise},
        \end{cases}
\end{equation}
where the integral is taken counter-clockwise. Thus we need to determine the coefficient of $X_{\z}^{-1}(z)$ of the corresponding elliptic expansions. Noting that the orientation of \eqref{int2} is the opposite of the orientation in \eqref{eqn:innerggen}, Lemma \ref{lem:ellexps} implies that  the contribution along $\partial\!\left(\mathcal{B}_{\varepsilon}(\z)\cap\mathcal{F}\right)$ from the first term in \eqref{eqn:innerggen} is
\begin{align*}
&\frac{4\pi}{\omega_{\z}}\!\Bigg(\sum_{m\geq 0} \overline{c_{\bgg_{\z},\z}^{++}(m)c_{\calJ_{n},\z}^{-}(-m-1)}\beta_{t_0}\!\Big(1-\varepsilon^2;-1,m+1\Big)\\
&\hspace{0.8cm}+ \sum_{m\leq -1}\!\! \overline{c_{\bgg_{\z},\z}^{+-}(m)c_{\calJ_n,\z}^{+}(-m-1)}\beta_{t_0}\!\left(1-\varepsilon^2;1,-m\right)\\
&\hspace{0.8cm}+\Big(\overline{c_{\bgg_{\z},\z}(0)}\beta_{t_0}\left(1-\varepsilon^2;1,0\right)+6\log\!\left(1-\varepsilon^2\right)\Big)\overline{c_{\calJ_n,\z}^{-}(-1)}\beta_{t_0}\!\left(1-\varepsilon^2;-1,1\right)\Bigg).
\end{align*}
By Lemma \ref{lem:betabnd}, each of these terms vanishes as $\varepsilon\to 0^+$.

By Lemma \ref{lem:ellexps}, the integral along $\partial(\mathcal{B}_{\varepsilon}(\z)\cap\mathcal{F})$ from the second term in \eqref{eqn:innerggen} evaluates as
\[
\frac{4\pi}{\omega_{\z}}\Bigg(\left(-\frac{\omega_{\z}}{2}+\frac{6\varepsilon^2}{1-\varepsilon^2}\right)\left(c_{\jj_n,\z}^{++}(0)+c_{\jj_n,\z}^{--}(0)\bm{\beta}_{-1,0}(\varepsilon)\right)+\sum_{m\leq -1}\overline{c_{\bgg_{\z},\z}^{+-}(m)}c_{\jj_n,\z}^{+-}(m)\beta_{t_0}\!\left(1-\varepsilon^2;1,-m\right)\Bigg).
\]
Lemma \ref{lem:betabnd} implies that this converges to $-2\pi c_{\jj_n,\z}^{++}(0)$ as $\varepsilon\to 0^+$. Taking the limit $r=\varepsilon\to 0^+$ in the expansion of $\jj_n$ in Lemma \ref{lem:ellexps} (1) and using the bounds in Lemma \ref{lem:betabnd} and Corollary \ref{cor:boldbetabnd} then gives $c_{\jj_n,\z}^{++}(0)=\jj_n(\z)$.

Since $\JJ_n$ does not have a singularity at $\z$, there is no contribution from the third integral in \eqref{eqn:innerggen}. Hence we conclude that the integral along $\partial(\mathcal{B}_{\varepsilon}(\z)\cap\mathcal{F})$ altogether contributes $-2\pi \jj_n(\z)$. Thus the overall inner product is, using \eqref{eqn:innerinfty},
\begin{equation}\label{Gzj}
\langle j_n,\bgg_{\z}\rangle =  6\overline{c_{\JJ_n}^{+++}(0)}+6 c_{\jj_n}^{+-}(0)-2\pi\jj_n(\z) = -2\pi\jj_n(\z)+c_n.
\end{equation}
\end{proof}

\section{A generating function and the proof of Theorem \ref{thm:Fdivgen} and Corollary \ref{cor:Fdiv}}\label{sec:Fdiv}
The functions needed for Theorem \ref{thm:Fdivgen} are
\begin{equation}\label{eqn:HHdef}
\widehat{\HH}_{\z}:=-\frac{1}{2\pi}\GG_{\z}  -\frac{1}{4\pi}\mathbb E,\qquad \widehat{\II}_{\z}:=\GG_{\z}.
\end{equation}

\begin{proof}[Proof of Theorem \ref{thm:Fdivgen}]
We compute the inner product $\left<j_n,\bgg_{\z}\right>$ in another way and then compare with the evaluation in Theorem \ref{thm:jninnergen}. Namely, we apply Stokes' Theorem' with \eqref{eqn:HGxi} to instead write
\[
\left<j_n,\bgg_{\z}\right>_{T,\varepsilon} = \overline{\left<\xi_{2}\!\left(\GG_{\z}\right) ,j_n\right>_{T,\varepsilon}} = -\int_{\partial \mathcal{F}_{T,\z,\varepsilon}}\GG_{\z}(z)j_{n}(z)dz.
\]
By Lemma \ref{lem:Fourierexps}, the integral along the boundary \eqref{eqn:boundaryinfty} near $i\infty$ contributes
\[
c_{\GG_{\z}}^{+++}(n)+c_{\GG_{\z}}^{+--}(n)
\delta_{n\neq 0}
 \bm{\W}_{2}(2\pi nT)-\delta_{n=0}\frac{6}{T}(1+\log(T)) + \sum_{m\geq 1}c_{j_n}(m)c_{\GG_{\z}}^{++-}(-m)W_2(-2\pi mT).
\]
By Lemma \ref{lem:Wgrowth}, every term other than $c_{\GG_{\z}}^{+++}(n)$ vanishes as $T\to\infty$.

We then use the elliptic expansions of $j_n$ and $\GG_{\z}$ in Lemma \ref{lem:ellexps}, Lemma \ref{lem:betabnd}, and Corollary \ref{cor:boldbetabnd} to show that the integral along $\partial\!\left(\mathcal{B}_{\varepsilon}(\z)\cap\mathcal{F}\right)$ vanishes. Thus
\begin{equation}\label{eqn:jninnerotherway}
\left<j_n,\bgg_{\z}\right> = c_{\GG_{\z}}^{+++}(n).
\end{equation}
Taking the generating function of both sides of \eqref{eqn:jninnerotherway} and recalling \eqref{Hz}, we see that
\begin{equation*}
\sum_{n\geq 0} \left<j_n,\bgg_{\z}\right>q^n = \GG_{\z}^{+++}(z)=\widehat{\II}_{\z}^{+++}(z).
\end{equation*}

It remains to show that $\HH_{\z}$ is the holomorphic part of $\widehat{\HH}_{\z}$, which is a weight two sesquiharmonic Maass form because the biharmonic part of $2\GG_{\z}+\mathbb{E}$ vanishes by Lemma \ref{lem:Fourierexps}. For this, we plug \eqref{Gzj} into \eqref{eqn:jninnerotherway} and use \eqref{duality} to obtain
\begin{equation}\label{eqn:boldJncoeff}
\jj_n(\z)=-\frac{1}{2\pi}c_{\GG_{\z}}^{+++}(n)+\frac{3}{\pi}\overline{c_{\JJ_n}^{+++}(0)} -\frac{3}{\pi} \overline{c_{\calJ_n}^{+}(0)}.
\end{equation}
Again by \eqref{Hz}, to show that $\HH_{\z}$ is the holomorphic part of $\widehat{\HH}_{\z}$ it remains to prove that $12\overline{c_{\calJ_n}^{+}(0)}-12\overline{c_{\JJ_n}^{+++}(0)}$ is the $n$-th Fourier coefficient of the holomorphic part of $\mathbb{E}$. In order to see this, we next compute $\left<j_n,\mathcal{E}\right>_{T}$. Using Stokes' Theorem and noting \eqref{eqn:xiboldE} gives
\begin{equation}\label{jE}
\langle j_n, \mathcal{E}\rangle_{T} = \overline{\langle\xi_2(\mathbb{E}),j_n\rangle_{T}} = -\int_{\partial \mathcal F_T}\mathbb E(z)j_n(z) dz.
\end{equation}
Plugging in Lemma \ref{lem:Fourierexps}, we obtain that \eqref{jE} equals
\begin{multline}\label{Ejfirst}
c_{\mathbb E}^{+++}(n)+c_{\mathbb{E}}^{+--}(n)
\delta_{n\neq 0}
\bm{\W}_{2}(2\pi nT) + \delta_{n=0}\left( c_{\mathbb{E}}^{+--}(0)\log(T) + \frac{12}{T}\left(1+\log(T)\right)\right)\\
+\sum_{m\geq 1} c_{j_n}(m)c_{\mathbb E}^{++-}(-m) W_2(-2\pi mT).
\end{multline}
We see by Lemma \ref{lem:Wgrowth} that \eqref{Ejfirst} becomes
\begin{equation}\label{eqn:innerEisen1}
\left<j_n,\mathcal{E}\right>_T =c_{\mathbb E}^{+++}(n)+\delta_{n=0} c_{\mathbb{E}}^{+--}(0)\log(T)+O\!\left(\tfrac{\log(T)}{T}\right).
\end{equation}

We next use Lemma \ref{lem:innerggen} with $\mathbb{F}_1=\JJ_n$ and $\mathbb{F}_2=\mathbb{E}$  to compute the inner product another way. We have $\mathbbm{f}_1=\jj_n$, $\mathcal{F}_1=-\mathcal{J}_n$, $f_1=j_n$, $\mathbbm{f}_2=\mathcal{E}$ by \eqref{eqn:xiboldE}, $\mathcal{F}_{2}=\xi_0(\mathcal{E})$, and $C=-12$ in this case. We plug in Lemma \ref{lem:Fourierexps} to the first term from Lemma \ref{lem:innerggen} to obtain
\begin{multline*}
\sum_{m\geq 1}\overline{c_{\calJ_n}^+(m)} \  \overline{c_{\mathcal{E}}^{+-}(-m)} W_0(-2\pi mT) +4\pi n W_{2}(2\pi n T) \overline{ c_{\mathcal E}^{+-}(-n)} W_{0}(-2\pi nT)\\
+\left( \overline{c_{\calJ_n}^+(0)}-\delta_{n=0}\frac{1}{T}\right) \left(4\pi T-12\log(T)\right) + \sum_{m\leq -1}\overline{c_{\calJ_n}^-(m)} W_2(2\pi mT) \overline{c_{\mathcal E}^{++}(-m)}.
\end{multline*}
By Lemma \ref{lem:Wgrowth}, this becomes
\begin{equation}\label{vanish1}
-4\pi \delta_{n=0}+\overline{c_{\calJ_n}^+(0)}\!\left(4\pi T-12\log(T)\right) +O\!\left(\tfrac{\log(T)}{T}\right).
\end{equation}
By Lemma \ref{lem:Fourierexps}, the second term in Lemma \ref{lem:innerggen} equals
\begin{multline*}
4\pi \sum_{m\leq -1} c_{\jj_n}^{+-}(m) c_{E_2}(-m) W_0(2\pi mT) + 8\pi\delta_{n\neq 0} \bm{\W}_0(-2\pi nT) c_{E_2}(n)\\
 + \left(c_{\jj_n}^{+-}(0) T + \delta_{n=0}(1+\log(T))\right)\left(4\pi c_{E_2}(0)-\tfrac{12}{T}\right).
\end{multline*}
Using Lemma \ref{lem:Wgrowth}, \eqref{duality},  and the fact that $c_{E_2}(0)=1$, this becomes
\begin{equation}\label{eqn:innerEsecond}
-4\pi \overline{c_{\calJ_n}^+(0)}T+12\overline{c_{\calJ_n}^+(0)}+4\pi \delta_{n=0}\log(T)+4\pi\delta_{n=0} +O\!\left(\tfrac{\log(T)}{T}\right).
\end{equation}
By \eqref{eqn:innerinfty3} the contribution from the last term in Lemma \ref{lem:innerggen} is, using \eqref{duality}
\begin{equation}\label{eqn:innerEthird}
-12 \overline{c_{\JJ_n}^{+++}(0)} + 12 \overline{c_{\calJ_n}^+(0)} \log(T)+o(1).
\end{equation}
Combining the respective evaluations \eqref{vanish1}, \eqref{eqn:innerEsecond}, and \eqref{eqn:innerEthird}, we get
\begin{align}\label{combining}
\left<j_n,\mathcal{E}\right>_T&=4\pi \delta_{n=0} \log(T)+ 12 \overline{c_{\calJ_n}^+(0)} - 12\overline{c_{\JJ_n}^{+++}(0)} +o(1).
\end{align}
Comparing the constant terms in the asymptotic expansions of \eqref{combining} and \eqref{eqn:innerEisen1} gives that
\begin{equation*}
c_{\mathbb E}^{+++}(n) =12 \overline{c_{\calJ_n}^{+}(0)} - 12 \overline{c_{\JJ_n}^{+++}(0)}.
\end{equation*}
Plugging this into \eqref{eqn:boldJncoeff}, we conclude that $\jj_n(\z)$ is the $n$-th coefficient of $\widehat{\HH}_{\z}$, as claimed.
\end{proof}
We conclude with the proof of Corollary \ref{cor:Fdiv}.
\begin{proof}[Proof of Corollary \ref{cor:Fdiv}]
The claim follows from Theorem \ref{thm:Fdivgen}, \eqref{eqn:HHdef}, \eqref{eqn:HGxi}, \eqref{eqn:xiboldE}, \eqref{logfG}, and the valence formula.
\end{proof}

\vspace{-.1in}

\end{document}